\documentclass[a4paper,10pt]{article}
\usepackage{pst-all}
\usepackage[T1]{fontenc}
\usepackage{amsmath,amsfonts,amsthm}
\usepackage{amssymb}
\usepackage{authblk}
\usepackage[tablesonly,notablist,nomarkers]{endfloat}
\usepackage[hmargin={28mm,28mm},vmargin={30mm,35mm}]{geometry}
\usepackage[latin1]{inputenc}
\usepackage{mathtools}
\usepackage{mathabx}
\usepackage{nicefrac}
\usepackage{numprint} 
\usepackage{paralist}
\usepackage[ocgcolorlinks,allcolors={blue}]{hyperref}
\usepackage{pmat}
\usepackage[bold]{hhtensor}
\usepackage{graphicx}
\usepackage{booktabs}
\usepackage[font=footnotesize]{caption}
\usepackage{cite}
\usepackage{comment}
\usepackage{enumerate}
\usepackage{tikz}
\usepackage{pgfplots,pgfplotstable}
\usepackage[font=footnotesize]{subcaption}
\usepackage[compact]{titlesec}
\usepackage{xcolor}

\usetikzlibrary{external}
\tikzexternalize


\usepackage{parskip}

\makeatletter
\def\thm@space@setup{%
  \thm@preskip=\parskip \thm@postskip=0pt
}
\makeatother


\newcommand{\email}[1]{\href{mailto:#1}{#1}}



\DeclareMathOperator{\optr}{tr}
\DeclareMathOperator{\opdev}{dev}

\newcommand{\SCAL}{{\cdot}}

\newcommand{\GRAD}{\vec{\nabla}}
\newcommand{\GRADs}{\GRAD_{\rm s}}
\newcommand{\GRADss}{\GRAD_{\rm ss}}
\newcommand{\DIV}{\vec{\nabla}{\cdot}}

\newcommand{\GRADh}{\GRAD_h}
\newcommand{\GRADsh}{\GRAD_{{\rm s},h}}

\newcommand{\st}{\; | \;}
\newcommand{\eqbydef}{\mathrel{\mathop:}=}

\newcommand{\closure}[1]{\overline{#1}}
\newcommand{\restrto}[2]{#1{}_{|#2}}
\newcommand{\Id}[1][d]{\matr{I}_{#1}}
\newcommand{\norm}[2][]{\|#2\|_{#1}}
\newcommand{\seminorm}[2][]{|#2|_{#1}}

\newcommand{\Real}{\mathbb{R}}

\newcommand{\Poly}[1]{\mathbb{P}^{#1}}

\newcommand{\Th}[1][h]{\mathcal{T}_{#1}}
\newcommand{\Fh}[1][h]{\mathcal{F}_{#1}}
\newcommand{\Fhi}{\Fh^{{\rm i}}}
\newcommand{\Fhb}{\Fh^{{\rm b}}}

\newcommand{\fTh}[1][h]{\mathfrak{T}_{#1}}
\newcommand{\normal}{\vec{n}}

\newcommand{\UT}[1][k]{\underline{\vec{U}}_T^{#1}}
\newcommand{\Uh}[1][k]{\underline{\vec{U}}_h^{#1}}
\newcommand{\UhD}[1][k]{\underline{\vec{U}}_{h,\mathrm{0}}^{#1}}

\newcommand{\GTs}[1][k]{\vec{G}_{{\rm s},T}^{#1}}

\newcommand{\rT}[1][k+1]{\vec{r}_T^{#1}}
\newcommand{\Ghs}[1][k]{\vec{G}_{{\rm s},h}^{#1}}
\newcommand{\rh}[1][k+1]{\vec{r}_h^{#1}}

\newcommand{\IT}[1][k]{\underline{\vec{I}}_T^{#1}}
\newcommand{\Ih}[1][k]{\underline{\vec{I}}_h^{#1}}
\newcommand{\lproj}[2][h]{\pi_{#1}^{#2}}
\newcommand{\vlproj}[2][h]{\vec\pi_{#1}^{#2}}

\newcommand{\ms}[1][]{\matr{\sigma}_{#1}}
\newcommand{\fms}[1][]{\matr{\varsigma}_{#1}}
\newcommand{\hfms}[1][]{\widehat{\matr{\varsigma}}_{#1}}

\newcommand{\vu}[1][]{\vec{u}_{#1}}

\newcommand{\vv}[1][]{\vec{v}_{#1}}
\newcommand{\vw}[1][]{\vec{w}_{#1}}

\newcommand{\uvu}[1][h]{\underline{\vec{u}}_{#1}}

\newcommand{\uvv}[1][h]{\underline{\vec{v}}_{#1}}

\newcommand{\uvw}[1][h]{\underline{\vec{w}}_{#1}}
\newcommand{\uvy}[1][h]{\underline{\vec{y}}_{#1}}

\newcommand{\tuvu}[1][h]{\widehat{\underline{\vec{u}}}_{#1}}
\newcommand{\tph}[1][h]{\widehat{p}_h}

\newcommand{\vf}[1][]{\vec{f}_{#1}}


\newcommand{\lms}[1][]{\underline{\sigma}_{#1}}
\newcommand{\ums}[1][]{\overline{\sigma}_{#1}}

\newcommand{\jump}[2][F]{[#2]_{#1}}


\newcommand{\term}{\mathfrak{T}}

\newcommand{\DpT}[1][T]{\underline{\vec{D}}_{\partial#1}^k}
\newcommand{\dpT}[1][T]{\underline{\vec{\delta}}_{\partial#1}^k}
\newcommand{\dF}[1][F]{\vec{\delta}_{F}^k}
\newcommand{\uvalpha}[1][T]{\underline{\vec{\alpha}}_{\partial#1}}

\newcommand{\valpha}[1][T]{\vec{\alpha}_{#1}}

\newcommand{\uRpT}[1][T]{\underline{\vec{R}}_{\partial #1}^k}
\newcommand{\EE}{\mathbb{E}}

\newtheorem{theorem}{Theorem}
\newtheorem{proposition}[theorem]{Proposition}
\newtheorem{lemma}[theorem]{Lemma}

\theoremstyle{remark}
\newtheorem{remark}[theorem]{Remark}
\theoremstyle{definition}
\newtheorem{assumption}[theorem]{Assumption}

\newtheorem{example}[theorem]{Example}



\title{A Hybrid High-Order method for nonlinear elasticity\footnote{This work was partially funded by the Bureau de Recherches G\'{e}ologiques et Mini\`{e}res. The work of M. Botti was partially supported by Labex NUMEV (ANR-10-LABX-20) ref. 2014-2-006.
The work of D. A. Di Pietro was partially supported by \emph{Agence Nationale de la Recherche} project ANR-15-CE40-0005.}}
\author[1]{Michele Botti\footnote{\email{michele.botti@umontpellier.fr}}}
\author[1]{Daniele A. Di Pietro\footnote{\email{daniele.di-pietro@umontpellier.fr}}}
\author[2]{Pierre Sochala\footnote{\email{p.sochala@brgm.fr}}}
\affil[1]{
  University of Montpellier, Institut Montpelliérain Alexander Grothendieck, 34095 Montpellier, France
}
\affil[2]{
  Bureau de Recherches Géologiques et Minières, 45060 Orléans, France
}

\begin{document}
\maketitle

\begin{abstract}
In this work we propose and analyze a novel Hybrid High-Order discretization of a class of (linear and) nonlinear elasticity models in the small deformation regime which are of common use in solid mechanics.
The proposed method is valid in two and three space dimensions, it supports general meshes including polyhedral elements 
and nonmatching interfaces, enables arbitrary approximation order, and the resolution cost can be reduced by 
statically condensing a large subset of the unknowns for linearized versions of the problem.
Additionally, the method satisfies a local principle of virtual work inside each mesh element, with interface tractions that obey the law of action and reaction.
A complete analysis covering very general stress-strain laws is carried out, and optimal error estimates are proved.
Extensive numerical validation on model test problems is also provided on two types of nonlinear models.
\end{abstract}

\section{Introduction}

In this work we develop and analyze a novel Hybrid High-Order (HHO) method for a class of (linear and) nonlinear elasticity problems in the small deformation regime.

Let $\Omega\subset\Real^d$, $d\in\{2,3\}$, denote a bounded connected open polyhedral domain with Lipschitz boundary 
$\Gamma\eqbydef\partial\Omega$ and outward normal $\normal$. We consider a body that occupies the region $\Omega$ and is subjected to a volumetric force field $\vf\in L^2(\Omega;\Real^d)$.
For the sake of simplicity, we assume the body fixed on $\Gamma$ (extensions to other standard boundary conditions are possible). The nonlinear elasticity 
problem consists in finding a vector-valued displacement field $\vu:\Omega\to\Real^d$ solution of
\begin{subequations}
  \label{eq:ne.strong}
  \begin{alignat}{4}
    \label{eq:ne.mech}
    -\DIV\ms(\cdot,\GRADs\vu) &= \vf &\qquad &\text{in}\;\Omega,
    \\
    \label{eq:ne.dirichlet}
    \vu &= \vec{0} &\qquad &\text{on}\;\Gamma,
  \end{alignat}
\end{subequations}
where $\GRADs$ denotes the symmetric gradient.
The stress-strain law $\ms:\Omega\times\Real^{d\times d}_{\mathrm{sym}}\to\Real^{d\times d}_{\mathrm{sym}}$ is assumed to satisfy regularity requirements closely inspired by~\cite{Droniou.Lamichhane:15}, including conditions on its growth, coercivity, and monotonicity; cf. Assumption~\ref{ass:hypo} below for a precise statement.
Problem~\eqref{eq:ne.strong} is relevant, e.g., in modeling the mechanical behavior of soft materials~\cite{Treolar:75} and metal alloys~\cite{Pitteri.Zanotto:03}.
Examples of stress-strain laws of common use in the engineering practice are collected in Section~\ref{sec:setting}.

The HHO discretization studied in this work is inspired by recent works on linear elasticity~\cite{Di-Pietro.Ern:15} (where HHO methods where originally introduced) and Leray--Lions operators~\cite{Di-Pietro.Droniou:15,Di-Pietro.Droniou:16}.
It hinges on degrees of freedom (DOFs) that are discontinuous polynomials of degree $k\ge 1$ on the mesh and on the mesh skeleton.
Based on these DOFs, we reconstruct discrete counterparts of the symmetric gradient and of the displacement by solving local linear problems inside each mesh element.
These reconstruction operators are used to formulate a local contribution composed of two terms: a consistency term inspired by the weak formulation of problem~\eqref{eq:ne.strong} with $\GRADs$ replaced by its discrete counterpart, and a stabilization term penalizing cleverly designed face-based residuals.
The resulting method has several advantageous features:
\begin{inparaenum}[(i)]
\item it is valid in arbitrary space dimension; 
\item it supports arbitrary polynomial orders $\ge 1$ on fairly general meshes including, e.g., polyhedral elements and nonmatching interfaces;
\item it satisfies inside each mesh element a local principle of virtual work with numerical tractions that obey the law of action and reaction;
\item it can be efficiently implemented thanks to the possibility of statically condensing a large subset of the unknowns for linearized versions of the problem (encountered, e.g., when solving the corresponding system of nonlinear algebraic equations by the Newton method).
\end{inparaenum}
For a numerical comparison between HHO methods and standard conforming finite element methods in the context of scalar diffusion problems see~\cite{Di-Pietro.Specogna:17}.
Additionally, as shown by the numerical tests of Section~\ref{sec:num.res}, the method is robust with respect to strong nonlinearities.

In the context of structural mechanics, discretization methods supporting polyhedral meshes and nonconforming interfaces 
can be useful for several reasons including, e.g., the use of hanging nodes for 
contact~\cite{Biabanaki.Khoei:14,Wriggers.Rust:16} and interface elasticity~\cite{Gatica.Wendeland:97} problems, the 
simplicity in mesh refinement~\cite{Spring.Leon:14} and coarsening~\cite{Bassi.Botti.ea:12} for 
adaptivity, and the greater robustness to mesh distorsion~\cite{Chi.Talishi:15} and fracture~\cite{Leon.Spring:14}.
The use of high-order methods, on the other hand, can classically accelerate the convergence in the presence of regular 
exact solutions or when combined with local mesh refinement.
Over the last few years, several discretization schemes supporting polyhedral meshes and/or high-order have been proposed
for the linear version of problem~\eqref{eq:ne.strong}; a non-exhaustive list includes~\cite{Soon.Cockburn.ea:09,Di-Pietro.Nicaise:13,Beirao-da-Veiga.Brezzi.ea:13*1,Di-Pietro.Lemaire:15,Di-Pietro.Ern:15,Wang.Ye:14,Wang.Wang:16}.
For the nonlinear version, the literature is more scarce.
Conforming approximations on standard meshes have been considered in~\cite{Gatica.Stephan:02,Gatica.Marquez:13}, where the convergence analysis is carried out assuming regularity for the exact displacement field $\vec{u}$ and the constraint tensor $\ms(\cdot,\GRADs\vec{u})$ beyond the minimal regularity required by the weak formulation.
Discontinuous Galerkin methods on standard meshes have been considered in~\cite{Ortner.Suli:07}, where convergence is proved for $d=2$ assuming $\vec{u}\in H^{m+1}(\Omega;\Real^2)$ for some $m > 2$, and 
in~\cite{Bi.Lin:12}, where convergence to minimal regularity solutions is proved for stress-strain functions similar  
to~\cite{Beirao-da-Veiga.Lovadina.ea:15}.
General meshes are considered, on the other hand, in~\cite{Beirao-da-Veiga.Lovadina.ea:15} and~\cite{Beirao-da-Veiga.Chi.ea:17}, where the authors propose a low-order Virtual Element method, whose convergence analysis is carried out for nonlinear elastic problems in the small deformation regime (more general problems are considered numerically).
In~\cite{Beirao-da-Veiga.Lovadina.ea:15}, an energy-norm convergence estimate in $h$  (with $h$ denoting, as usual, the meshsize) is proved when $\vec{u}\in H^2(\Omega;\Real^d)$ under the assumption that the function $\tau\mapsto\ms(\cdot,\tau)$ is piecewise $C^1$ with positive definite and bounded differential inside each element. A closer look at the proof reveals that properties essentially analogous to the ones considered in Assumption~\ref{ass:hypo_add} below are in fact sufficient for the analysis, while $C^1$-regularity is used for the evaluation of the stability constant.
Convergence to solutions that exhibit only the minimal regularity required by the weak formulation and for stress-strain functions as in Assumption~\ref{ass:hypo} is proved in~\cite{Droniou.Lamichhane:15} for Gradient Schemes~\cite{Droniou.Eymard.ea:16}.
In this case, convergence rates are only proved for the linear case.
We note, in passing, that the HHO method studied here fails to enter the Gradient Scheme framework essentially because the stabilization term is not embedded into the discrete symmetric gradient operator; see~\cite{Di-Pietro.Droniou.ea:17}. 

We carry out a complete analysis for the proposed HHO discretization of problem~\eqref{eq:ne.strong}.
Existence of a discrete solution is proved in Theorem~\ref{theo:existence}, where we also identify a strict monotonicity assumption on the stress-strain law which ensures uniqueness.
Convergence to minimal regularity solutions $\vec{u}\in H_0^1(\Omega;\Real^d)$ is proved in Theorem~\ref{theo:convergence} using a compactness argument inspired by~\cite{Droniou.Lamichhane:15,Di-Pietro.Droniou:15}.
More precisely, we prove for monotone stress-strain laws that
\begin{inparaenum}[(i)]
\item the discrete displacement field strongly converges (up to a subsequence) to $\vec{u}$ in $L^q(\Omega;\Real^d)$ with $1\le q<+\infty$ if $d=2$ and $1\le q< 6$ if $d=3$;
\item the discrete strain tensor weakly converges (up to a subsequence) to $\GRADs\vec{u}$ in $L^2(\Omega,\Real^{d\times d})$.
\end{inparaenum}
Notice that our results are slightly stronger than~\cite[Theorem 3.5]{Droniou.Lamichhane:15} (cf. also Remark 3.6 therein) because the HHO 
discretization is compact as proved in Lemma~\ref{lemma:compactness}.
If, additionally, strict monotonicity holds for $\ms$, the strain tensor strongly converges and convergence extends to the whole sequence.
An optimal energy-norm error estimate in $h^{k+1}$ is then proved in Theorem~\ref{th:error_estimate} under the additional conditions of Lipschitz continuity and strong monotonicity on the stress-strain law; cf. Assumption~\ref{ass:hypo_add}.
The performance of the method is investigated in Section~\ref{sec:num.res} on a complete panel of model problems using stress-strain laws corresponding to real materials.

The rest of the paper is organized as follows.
In Section~\ref{sec:setting} we formulate the assumptions on the stress-strain function $\ms$, provide several examples of models relevant in the engineering practice, and write the weak formulation of problem~\eqref{eq:ne.strong}.
In Section~\ref{sec:mesh} we introduce the notation for the mesh and recall a few known results.
In Section~\ref{sec:hho} we discuss the choice of DOFs, formulate the local reconstructions, and state the discrete problem along with the main results, collected in Theorems~\ref{theo:existence},~\ref{theo:convergence}, and~\ref{th:error_estimate}.
In Section~\ref{sec:vw} we show that the HHO method satisfies on each mesh element a discrete counterpart of the principle of virtual work, and that interface tractions obey the law of action and reaction.
Section~\ref{sec:num.res} contains numerical tests, while the proofs of the main results are given in Section~\ref{sec:analysis}.
Finally, Appendix~\ref{app:tech.res} contains the proofs of intermediate technical results.
This structure allows different levels of reading.
In particular, readers mainly interested in the numerical recipe and results may focus primarily on the material of Sections~\ref{sec:setting}--\ref{sec:num.res}.


\section{Setting and examples}\label{sec:setting}

For the stress-strain function, we make the following
\begin{assumption}[Stress-strain function I]
  \label{ass:hypo}
  The stress-strain function $\ms:\Omega\times\Real^{d\times d}_{\mathrm{sym}}\to\Real^{d\times d}_{\mathrm{sym}}$ is a Caratheodory function, namely
\begin{subequations}
  \label{eq:hypo}
  \begin{alignat}{2} 
    \label{eq:hypo.car1}
    &\ms(\vec{x},\cdot) \text{ is continuous on } \Real^{d\times d}_{\mathrm{sym}} \text{ for a.e. } \vec{x}\in\Omega,
    \\ 
    \label{eq:hypo.car2}
    &\ms(\cdot,\matr{\tau}) \text{ is measurable on } \Omega
    \text{ for all }\matr{\tau}\in\Real^{d\times d}_{\mathrm{sym}},
  \end{alignat}
  and it holds $\ms(\cdot,\vec{0})\in L^2(\Omega;\Real^{d\times d})$. Moreover, there exist real numbers $\ums,\;\lms\in (0,+\infty)$ such 
  that, for a.e. $\vec{x}\in\Omega$, and all $\matr{\tau},\matr{\eta}\in\Real^{d\times d}_{\mathrm{sym}}$, the following conditions hold:
  \begin{alignat}{3}
  \label{eq:hypo.growth}
  &\norm[d\times d]{\ms(\vec{x},\matr{\tau})-\ms(\vec{x},\vec{0})}\le \ums \norm[d\times d]{\matr{\tau}},
  &\quad &\text{(growth)}
  \\
  \label{eq:hypo.coercivity}
  &\ms(\vec{x},\matr{\tau}) :\matr{\tau}\ge \lms \norm[d\times d]{\matr{\tau}}^2,
  &\quad &\text{(coercivity)}
  \\
  \label{eq:hypo.monotonicity}
  &\left(\ms(\vec{x},\matr{\tau})-\ms(\vec{x},\matr{\eta})\right):\left(\matr{\tau}-\matr{\eta}\right)\ge 0,
  &\quad &\text{(monotonicity)}
  \end{alignat}
\end{subequations}
where $\matr{\tau}:\matr{\eta}\eqbydef \sum_{i,j=1}^d \tau_{i,j}\eta_{i,j}$ 
and $\norm[d\times d]{\matr{\tau}}^2\eqbydef\matr{\tau}:\matr{\tau}$. 
\end{assumption}
We next discuss a number of meaningful models that satisfy the above assumptions.
\begin{example}[Linear elasticity]\label{ex:lin.cauchy}
  The linear elasticity model corresponds to $$\ms(\cdot,\GRADs\vu)=\tens{C}{4}(\cdot)\GRADs\vu,$$ where $\tens{C}{4}$ is 
  a fourth order tensor. Being linear, the previous stress-strain relation clearly satisfies 
  Assumption~\ref{ass:hypo} provided that $\tens{C}{4}$ is uniformly elliptic. 
  A particular case of the previous stress-strain relation is the usual linear elasticity Cauchy stress tensor 
  \begin{equation}
    \label{eq:LinCauchy}
    \ms(\GRADs\vu)=\lambda\optr(\GRADs\vu)\Id+2\mu\GRADs\vu,
  \end{equation}
  where $\optr (\vec\tau)\eqbydef\vec\tau:\Id$ and
  $\lambda,\mu \in\Real$ are Lamé's parameters.
\end{example}

\begin{example}[Hencky--Mises model]\label{ex:hencky}
  The nonlinear Hencky--Mises model of~\cite{Necas:86,Gatica.Stephan:02} corresponds to the stress-strain 
  relation
  \begin{equation}
    \label{eq:HenckyMises}
    \ms(\GRADs\vu)=\tilde{\lambda}(\opdev(\GRADs\vu))\optr (\GRADs\vu)\Id+2\tilde{\mu}(\opdev(\GRADs\vu))\GRADs\vu,
  \end{equation} 
  where $\tilde\lambda$ and $\tilde\mu$ are the nonlinear Lamé's scalar functions and 
  $\opdev:\Real^{d\times d}_{\rm{sym}}\to\Real$ defined by $\opdev(\vec\tau)=\optr (\vec\tau^2)-\frac1d\optr (\vec\tau)^2$ 
  is the deviatoric operator. Conditions on $\tilde{\lambda}$ and $\tilde{\mu}$ such that $\ms$ satisfies 
  Assumption~\ref{ass:hypo} can be found in~\cite{Barrientos.Gatica.ea:02,Beirao-da-Veiga.Lovadina.ea:15}.
\end{example}

\begin{example}[An isotropic damage model]\label{ex:damage}
  The isotropic damage model of~\cite{Cervera.Chiumenti.ea:10} corresponds to the stress-strain 
  relation
  \begin{equation}
    \label{eq:Damage}
    \ms(\cdot,\GRADs\vu)=(1-D(\GRADs\vu))\tens{C}{4}(\cdot)\GRADs\vu,
  \end{equation}
  where $D:\Real^{d\times d}_{\rm{sym}}\to\Real$ is the scalar damage function. If there exists a continuous and bounded
  function $f:\lbrack 0,+\infty)\to[a,b]$ for some $0<a\le b$, such that $s\in\lbrack 0,+\infty)\to sf(s)$ is non-decreasing and, for all 
  $\matr{\tau}\in\Real^{d\times d}_{\mathrm{sym}}$, $D(\vec\tau)=1-f(|\vec\tau|)$, the damage model constitutive relation 
  satisfies Assumption~\ref{ass:hypo}.
\end{example}

In the numerical experiments of Section~\ref{sec:num.res} we will also consider the following model, relevant in engineering applications, which however does not satisfy Assumption~\ref{ass:hypo} in general.
\begin{example}[The second-order elasticity model]\label{ex:second}
  The nonlinear second-order isotropic elasticity model of~\cite{Hughes.Kelly:53,Destrade:10,Landau.Lifshitz:59} 
  corresponds to the stress-strain relation
  \begin{multline}
    \label{eq:SecondOrder}
    \ms(\GRADs\vu)=\lambda\optr (\GRADs\vu)\Id+2\mu\GRADs\vu
    \\
    + B\optr ((\GRADs\vu)^2)\Id+2B\optr (\GRADs\vu)\GRADs\vu
    + C \optr (\GRADs\vu)^2\Id
    + A (\GRADs\vu)^2,
  \end{multline}
  where $\lambda$ and $\mu$ are the standard Lamé's parameter, and $A, B, C \in\Real$ are the second-order moduli. 
\end{example}
\medskip
\begin{remark}[Energy density functions]\label{rem:hyperelasticity}
Examples~\ref{ex:lin.cauchy},~\ref{ex:hencky}, and~\ref{ex:second}, used in numerical tests of Section~\ref{sec:num.res}, 
can be interpreted in the framework of hyperelasticity.
Hyperelasticity is a type of constitutive model for ideally elastic materials in which the 
stress-strain relation derives from a stored energy density function $\Psi:\Real^{d\times d}_{\rm{sym}}\to\Real$, namely 
$$
\ms(\vec{\tau}) \eqbydef \frac{\partial\Psi(\vec{\tau})}{\partial\vec{\tau}}.
$$
The stored energy density function leading to the linear Cauchy stress tensor~\eqref{eq:LinCauchy} is
\begin{equation}
  \label{eq:stored_energy_lin}
\Psi_{\rm{lin}}(\vec\tau) \eqbydef \frac\lambda2 \optr (\vec\tau)^2 + \mu\optr(\vec\tau^2),
\end{equation}
while, in the Hencky--Mises model~\eqref{eq:HenckyMises}, it is defined such that
\begin{equation}
  \label{eq:stored_energy_HM}
\Psi_{\rm{hm}}(\vec\tau) \eqbydef \frac\alpha2 \optr (\vec\tau)^2 + \Phi(\opdev(\vec\tau)).
\end{equation}
Here $\alpha\in(0,+\infty)$, while $\Phi:[0,+\infty)\to\Real$ is a function of class $C^2$ satisfying, for some positive 
constants $C_1$, $C_2$, and $C_3$,
\begin{equation}
  \label{eq:condition_phi} 
  C_1\le\Phi'(\rho)<\alpha\quad,\quad |\rho\Phi''(\rho)|\le C_2\quad\text{and}\quad\Phi'(\rho) + 2\rho\Phi''(\rho)\ge C_3
  \;\quad\forall \rho\in[0,+\infty). 
\end{equation}
Deriving the energy density function~\eqref{eq:stored_energy_HM} yields the stress-strain relation~\eqref{eq:HenckyMises} 
with nonlinear Lamé's functions $\tilde\mu(\rho)\eqbydef\Phi'(\rho)$ and $\tilde\lambda(\rho)\eqbydef \alpha-\Phi'(\rho)$.
Taking $\alpha=\lambda+\mu$ and $\Phi(\rho)=\mu\rho$ in~\eqref{eq:stored_energy_HM} leads to the linear case.
Finally, the second-order elasticity model~\eqref{eq:SecondOrder} is obtained by adding third-order terms to the linear 
stored energy density function defined in~\eqref{eq:stored_energy_lin}:
\begin{equation}
  \label{eq:stored_energy_2o}
  \Psi_{\rm{snd}}(\vec\tau) \eqbydef \frac{\lambda}{2}\optr (\vec\tau)^2 + \mu\optr \left(\vec\tau^2\right)
  +\frac{C}{3}\optr (\vec\tau)^3+B\optr (\vec\tau)\optr (\vec\tau^2)+\frac{A}{3}\optr (\vec\tau^3).
\end{equation} 
\end{remark}
The weak formulation of problem~\eqref{eq:ne.strong} that will serve as a starting point for 
the development and analysis of the HHO method reads
\begin{equation}
  \label{eq:weak}
  \text{Find $\vu\in H^1_0(\Omega;\Real^d)$  such that }  
  a(\vu,\vv)=\int_{\Omega}{\vf\cdot\vv}\quad\forall\vv\in H^1_0(\Omega;\Real^d),
\end{equation}
where $H^1_0(\Omega;\Real^d)$ is the zero-trace subspace of $H^1(\Omega;\Real^d)$ and the function $a:H^1_0(\Omega;\Real^d)\times H^1_0(\Omega;\Real^d)\to\Real$ is such that
$$
  a(\vv,\vw)\eqbydef \int_{\Omega}{\ms(\vec{x},\GRADs\vv(\vec{x})):\GRADs\vw(\vec{x}) {\rm d}\vec{x}}.
$$
Throughout the rest of the paper, to alleviate the notation, we omit the dependence on the space variable $\vec{x}$ and the differential ${\rm d}\vec{x}$ from integrals.

\section{Notation and basic results}\label{sec:mesh}

We consider refined sequences of general polytopal meshes as in \cite[Definition 7.2]{Droniou.Eymard.ea:16} matching the regularity requirements detailed in \cite[Definition 3]{Di-Pietro.Tittarelli:17}.
The main points are summarized hereafter.
Denote by ${\cal H}\subset \Real_*^+ $ a countable set of meshsizes having $0$ as its unique accumulation point, and let $(\Th)_{h \in {\cal H}}$ be a refined mesh sequence where each $\Th$ is a finite collection of nonempty disjoint open polyhedral elements $T$ with boundary $\partial T$ such that $\closure{\Omega}=\bigcup_{T\in\Th}\closure{T}$ and $h=\max_{T\in\Th} h_T$ with $h_T$ diameter of $T$.

For each $h\in{\cal H}$, let $\Fh$ be a set of faces with disjoint interiors which partitions the mesh skeleton, i.e.,
$\bigcup_{F\in\Fh}F=\bigcup_{T\in\Th}\partial T$.
A face $F$ is defined here as a hyperplanar closed connected subset of $\closure{\Omega}$ with positive $ (d{-}1) $-dimensional Hausdorff measure such that
\begin{inparaenum}[(i)]
\item either there exist distinct $T_1,T_2\in\Th $ such that $F\subset\partial
  T_1\cap\partial T_2$ and $F$ is called an interface or 
\item there exists $T\in\Th$ such that $F\subset\partial T\cap\Gamma$ and $F$ is called a boundary face.
\end{inparaenum}
Interfaces are collected in the set $\Fhi$ and boundary faces in $\Fhb$, so that $\Fh\eqbydef\Fhi\cup\Fhb$.
For all $T\in\Th$, $\Fh[T]\eqbydef\{F\in\Fh\st F\subset\partial T\}$ denotes the set of faces contained in $\partial T$ 
and, for all $F\in\Fh[T]$, $\normal_{TF}$ is the unit normal to $F$ pointing out of $T$.

Mesh regularity holds in the sense that, for all $h\in{\cal H}$, $\Th$ admits a matching simplicial submesh $\fTh$ and there exists a real number $\varrho>0$ such that, for all $h\in{\cal H}$,
\begin{inparaenum}[(i)]
\item for any simplex $S\in\fTh$ of diameter $h_S$ and inradius $r_S$, $\varrho h_S\le r_S$ and
\item for any $T\in\Th$ and all $S\in\fTh$ such that $S\subset T$, $\varrho h_T \le h_S$.
\end{inparaenum}

Let $X$ be a mesh element or face.
For an integer $l\ge 0$, we denote by $\Poly{l}(X;\Real)$ the space spanned by the restriction to $X$ of scalar-valued, $d$-variate polynomials of total degree $l$.
The $L^2$-projector $\lproj[X]{l}:L^1(X;\Real)\to\Poly{l}(X;\Real)$ is defined such that, for all $v\in L^1(X;\Real)$,
\begin{equation}\label{eq:lproj}
  \int_X(\lproj[X]{l}v-v)w = 0\qquad\forall w\in\Poly{l}(X;\Real).
\end{equation}
When dealing with the vector-valued polynomial space $\Poly{l}(X;\Real^d)$ or with the tensor-valued polynomial space $\Poly{l}(X;\Real^{d\times d})$, we use the boldface notation $\vlproj[X]{l}$ for the corresponding $L^2$-orthogonal projectors acting component-wise.

On regular mesh sequences, we have the following optimal approximation properties for $\lproj[T]{l}$ (for a proof, cf.~\cite[Lemmas~1.58 and~1.59]{Di-Pietro.Ern:12} and, in a more general framework,~\cite[Lemmas~3.4 and~3.6]{Di-Pietro.Droniou:16}):
There exists a real number $C_{\rm app}>0$ such that, for all $s\in\{0,\ldots,l+1\}$, all $h\in{\cal H}$, all $T\in\Th$, and all $v\in H^s(T;\Real)$, 
\begin{subequations}\label{eq:approx.lproj.el+trace}
\begin{equation}{2}
  \label{eq:approx.lproj}
  \seminorm[H^m(T;\Real)]{v - \lproj[T]{l} v }
  \le 
  C_{\rm app} h_T^{s-m} 
  \seminorm[H^s(T;\Real)]{v}
  \qquad \forall m \in \{0,\ldots,s\},
\end{equation}
and, if $s\ge 1$,
\begin{equation}
  \label{eq:approx.lproj.trace}
  \seminorm[H^m(\mathcal{F}_T;\Real)]{v - \lproj[T]{l}v }
  \le 
  C_{\rm app} h_T^{s-m-\frac12} 
  \seminorm[H^s(T;\Real)]{v}
  \qquad \forall m \in \{0,\ldots,s-1\}.
\end{equation}
\end{subequations}
Other useful geometric and functional analytic results on regular mesh sequences can be found in~\cite[Chapter~1]{Di-Pietro.Ern:12} and~\cite{Di-Pietro.Droniou:15,Di-Pietro.Droniou:16}.

At the global level, we define broken versions of polynomial and Sobolev spaces.
In particular, for an integer $l\ge 0$, we denote by $\Poly{l}(\Th;\Real)$, $\Poly{l}(\Th;\Real^d)$, and $\Poly{l}(\Th;\Real^{d\times d})$, respectively, the space of scalar-valued, vector-valued, and tensor-valued broken polynomial functions on $\Th$ of total degree $l$. The space of broken vector-valued polynomial functions of total degree $l$ on the trace of the mesh on the domain boundary $\Gamma$ is denoted by $\Poly{l}(\Fhb;\Real^d)$.
Similarly, for an integer $s\ge 1$, $H^s(\Th;\Real)$, $H^s(\Th;\Real^d)$, and $H^s(\Th;\Real^{d\times d})$ are the scalar-valued, vector-valued, and tensor-valued broken Sobolev spaces of index $s$.

Throughout the rest of the paper, for $X\subset\closure{\Omega}$, we denote by $\norm[X]{{\cdot}}$ the standard norm in $L^2(X;\Real)$, with the convention that 
the subscript is omitted whenever $X=\Omega$. The same notation is used for the vector- and tensor-valued spaces 
$L^2(X;\Real^d)$ and $L^2(X;\Real^{d\times d})$.

\section{The Hybrid High-Order method}
\label{sec:hho}
In this section we define the space of DOFs and the local reconstructions, and we state the discrete problem along with the main results (whose proof is postponed to Section~\ref{sec:analysis}).

\subsection{Degrees of freedom}
\begin{figure}
  \centering
  \includegraphics[height=3cm]{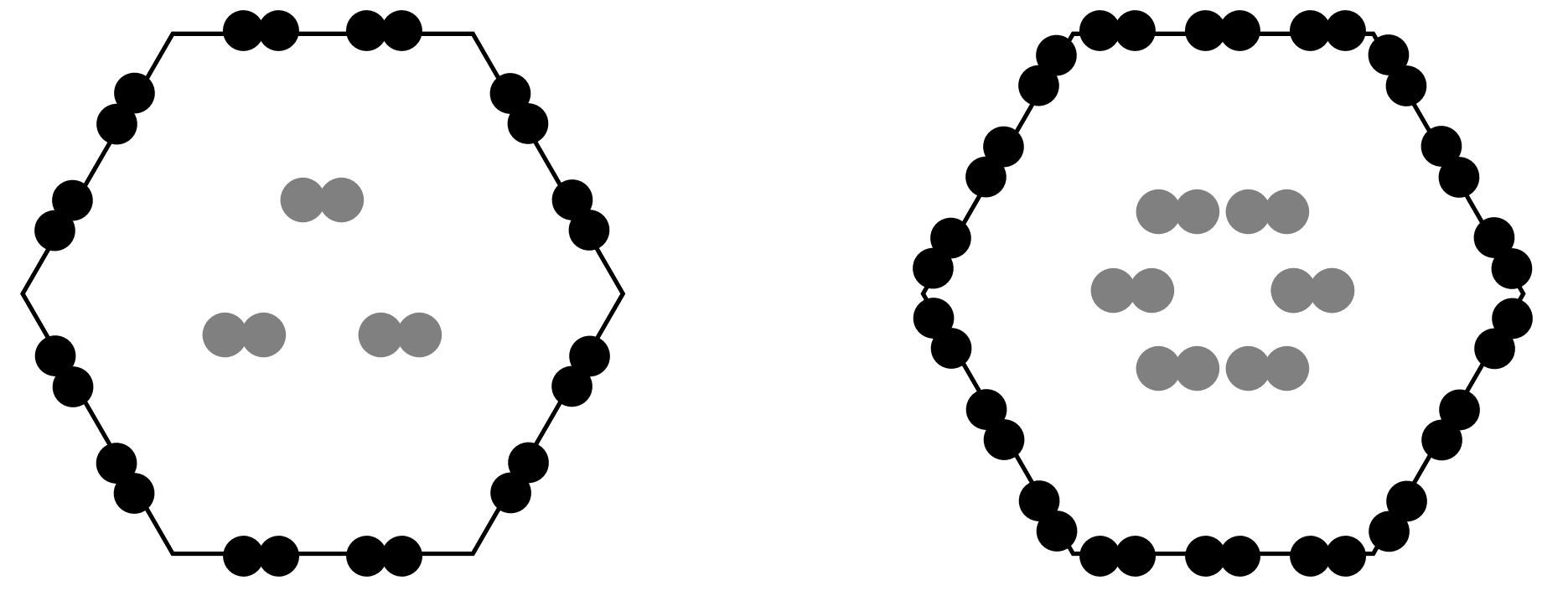}
  \caption{Local DOFs for $k=1$ (left) and $k=2$ (right). Shaded DOFs can be locally eliminated by static 
    condensation when solving linearized versions of problem~\eqref{eq:discrete.pb}.\label{fig:dofs}}
\end{figure}
Let a polynomial degree $k\ge 1$ be fixed. The DOFs for the displacement are collected in the space 
$$
  \Uh\eqbydef\left(
  \bigtimes_{T\in\Th}\Poly{k}(T;\Real^d)
  \right)\times\left(
  \bigtimes_{F\in\Fh}\Poly{k}(F;\Real^d)
  \right),
$$
see Figure~\ref{fig:dofs}. 
Observe that naming $\Uh$ the space of DOFs involves a shortcut: the actual DOFs can be chosen in several equivalent ways (polynomial moments, point values, etc.), and the specific choice does not affect the following discussion.
  Details concerning the actual choice made in the implementation are given in Section~\ref{sec:num.res} below.

For a generic collection of DOFs in $\Uh$, we use the classical HHO underlined notation 
$\uvv\eqbydef\big((\vv[T])_{T\in\Th},(\vv[F])_{F\in\Fh}\big)$.
We also denote by $\vv[h]\in\Poly{k}(\Th;\Real^d)$ and $\vv[\Gamma,h]\in\Poly{k}(\Fhb;\Real^d)$ (not underlined) 
the broken polynomial functions such that 
$$
\restrto{(\vv[h])}{T}=\vv[T] \quad\forall T\in\Th \quad\text{ and } \quad
\restrto{(\vv[\Gamma,h])}{F}=\vv[F] \quad\forall F\in\Fhb.
$$
The restrictions of $\Uh$ and $\uvv$ to a mesh element $T$ are denoted by $\UT$ and 
$\uvv[T] = \big(\vv[T], (\vv[F])_{F\in\Fh[T]}\big)$, respectively.
The space $\Uh$ is equipped with the following discrete strain semi-norm:
\begin{equation}
  \label{eq:norm.epsh}
  \norm[\epsilon,h]{\uvv[h]}\eqbydef\left(\sum_{T\in\Th}\norm[\epsilon,T]{\uvv[h]}^2\right)^{\nicefrac12},\qquad
  \norm[\epsilon,T]{\uvv[h]}^2\eqbydef\norm[T]{\GRADs\vv[T]}^2
  + \sum_{F\in\Fh[T]}h_F^{-1}\norm[F]{\vv[F]-\vv[T]}^2.
\end{equation}

The DOFs corresponding to a given function $\vv\in H^1(\Omega;\Real^d)$ are obtained by means of the reduction map $\Ih:H^1(\Omega;\Real^d)\to\Uh$ such that
\begin{equation}
  \label{eq:Ih}
  \Ih\vv \eqbydef \big( (\vlproj[T]{k}\vv)_{T\in\Th}, (\vlproj[F]{k}\vv)_{F\in\Fh} \big),
\end{equation}
where we remind the reader that $\vlproj[T]{k}$ and $\vlproj[F]{k}$ denote the $L^2$-orthogonal projectors on 
$\Poly{k}(T;\Real^d)$ and $\Poly{k}(F;\Real^d)$, respectively.
For all mesh elements $T\in\Th$, the local reduction map $\IT:H^1(T;\Real^d)\to\UT$ is obtained by a restriction of $\Ih$, 
and is therefore such that for all $\vv\in H^1(T;\Real^d)$
\begin{equation}\label{eq:IT}
  \IT\vv=\big(\vlproj[T]{k}\vv,(\vlproj[F]{k}\vv)_{F\in\Fh[T]}\big).
\end{equation}
 
\subsection{Local reconstructions}
We introduce symmetric gradient and displacement reconstruction operators devised at the element level that are 
instrumental in the formulation of the method.

Let a mesh element $T\in\Th$ be fixed.
The local symmetric gradient reconstruction operator
$$
\GTs:\UT\to\Poly{k}(T;\Real^{d\times d}_{\mathrm{sym}})
$$
is obtained by solving the following pure traction problem:
For a given local collection of DOFs $\uvv[T]=\big(\vv[T],(\vv[F])_{F\in\Fh[T]}\big)\in\UT$, find 
$\GTs\uvv[T]\in\Poly{k}(T;\Real^{d\times d}_{\mathrm{sym}})$ such that, for all 
$\matr{\tau}\in\Poly{k}(T;\Real^{d\times d}_{\mathrm{sym}})$,
\begin{subequations}\label{eq:GTs.def}
  \begin{align}
    \int_T\GTs\uvv[T]:\matr{\tau}
    \label{eq:GTs.bis}
    &= -\int_T\vv[T]\cdot(\DIV\matr{\tau})
    + \sum_{F\in\Fh[T]}\int_F\vv[F]\cdot(\matr{\tau}\normal_{TF})
    \\
    \label{eq:GTs}  
    &= \int_T\GRADs\vv[T]:\matr{\tau}
    + \sum_{F\in\Fh[T]}\int_F (\vv[F]-\vv[T])\cdot(\matr{\tau}\normal_{TF}).
  \end{align}
\end{subequations}
The right-hand side of~\eqref{eq:GTs.bis} is designed to resemble an integration by parts formula where the role of the function represented by the DOFs in $\uvv[T]$ is played by $\vv[T]$ inside the volumetric integral and by $(\vv[F])_{F\in\Fh[T]}$ inside boundary integrals.
The reformulation~\eqref{eq:GTs}, obtained integrating by parts the first term in the right-hand side of~\eqref{eq:GTs.bis}, highlights the fact that our method is nonconforming, as the second addend accounts for the difference between $\vv[F]$ and $\vv[T]$.

The definition of the symmetric gradient reconstruction is justified observing that, using the definitions~\eqref{eq:IT} of the local reduction map $\IT$ and~\eqref{eq:lproj} of the $L^2$-orthogonal projectors $\vlproj[T]{k}$ and $\vlproj[F]{k}$ in~\eqref{eq:GTs.bis}, one can prove the following commuting property: For all $T\in\Th$ and all $\vv\in H^1(T;\Real^d)$,
\begin{equation}
  \label{eq:commuting}
  \GTs\IT\vv=\vlproj[T]{k}(\GRADs\vv).
\end{equation}
As a result of~\eqref{eq:commuting} and~\eqref{eq:approx.lproj.el+trace}, 
$\GTs\IT$ has optimal approximation properties in $\Poly{k}(T;\Real^{d\times d}_{\mathrm{sym}})$.

From $\GTs$, one can define the local displacement reconstruction operator
$$
\rT:\UT\to\Poly{k+1}(T;\Real^d)
$$
such that, for all $\uvv[T]\in\UT$, $\GRADs\rT\uvv[T]$ is the orthogonal projection of $\GTs\uvv[T]$ on $\GRADs\Poly{k+1}(T;\Real^d)\subset\Poly{k}(T;\Real^{d\times d}_{\mathrm{sym}})$ and rigid-body motions are prescribed according to~\cite[Eq.~(15)]{Di-Pietro.Ern:15}.
More precisely, we let $\rT\uvv[T]$ be such that for all $\vw\in\Poly{k+1}(T;\Real^d)$ it holds
$$
\int_T(\GRADs\rT\uvv[T]-\GTs\uvv[T]):\GRADs\vw = 0
$$
and, denoting by $\GRADss$ the skew-symmetric part of the gradient operator, we have
$$
\int_T\rT\uvv[T]=\int_T\vv[T],\qquad
\int_T\GRADss\rT\uvv[T]=\sum_{F\in\Fh[T]}\int_{F}\frac12\left(\normal_{TF}\otimes\vv[F]-\vv[F]\otimes\normal_{TF}\right).
$$
Notice that, for a given $\uvv[T]\in\UT$, the displacement reconstruction $\rT\uvv[T]$ is a vector-valued polynomial function one degree higher than the element-based DOFs $\vv[T]$.
It was proved in~\cite[Lemma 2]{Di-Pietro.Ern:15} that $\rT\IT$ has optimal approximation properties in $\Poly{k+1}(T;\Real^d)$.

In what follows, we will also need the global counterparts of the discrete gradient and displacement operators 
$\Ghs:\Uh\to\Poly{k}(\Th;\Real^{d\times d}_{\mathrm{sym}})$ and $\rh:\Uh\to\Poly{k+1}(\Th;\Real^d)$ defined setting, for all 
$\uvv[h]\in\Uh$ and all $T\in\Th$,
\begin{equation}
  \label{eq:global.operators}
  (\Ghs\uvv[h])_{|T}= \GTs\uvv[T],\qquad
  \restrto{(\rh\uvv[h])}{T} = \rT\uvv[T].  
\end{equation}

\subsection{Discrete problem}

We define the following subspace of $\Uh$ strongly accounting for the homogeneous Dirichlet boundary condition~\eqref{eq:ne.dirichlet}:
\begin{equation}
  \label{eq:Uh0}
  \UhD\eqbydef\left\{
	\uvv[h]\in \Uh\st
    \vv[F]=\vec{0}\quad\forall F\in\Fhb\right\},
\end{equation}
and we notice that the map $\norm[\epsilon,h]{{\cdot}}$ defined by~\eqref{eq:norm.epsh} is a norm on $\UhD$.
The HHO approximation of problem~\eqref{eq:weak} reads:
\begin{equation}
  \label{eq:discrete.pb}  
  \text{Find } \uvu[h]\in\UhD\;\text{ such that }
  a_h(\uvu[h],\uvv[h])\eqbydef
  A_h(\uvu[h],\uvv[h]) + s_h(\uvu[h],\uvv[h])= \int_\Omega\vf\cdot\vv[h]\quad\forall\uvv[h]\in\UhD,
\end{equation}
where the consistency contribution $A_h:\Uh\times\Uh\to\Real$ and the stability contribution $s_h:\Uh\times\Uh\to\Real$ are
respectively defined setting
\begin{equation}\label{eq:Ah}
  A_h(\uvu[h],\uvv[h])\eqbydef 
  \int_\Omega{\ms(\cdot,\Ghs\uvu[h]):\Ghs\uvv[h]},
\end{equation}
\begin{equation}\label{eq:sT}
  s_h(\uvu[h],\uvv[h])\eqbydef
  \sum_{T\in\Th} s_T(\uvu[T],\uvv[T]),
  \quad\text{with} \quad s_T(\uvu[T],\uvv[T])\eqbydef
  \sum_{F\in\Fh[T]}\frac{\gamma}{h_F}\int_F\vec{\Delta}_{TF}^k\uvu[T]\cdot\vec{\Delta}_{TF}^k\uvv[T].
\end{equation}
The scaling parameter $\gamma>0$ in~\eqref{eq:sT} can depend on $\ums$ and $\lms$ but is independent of the meshsize $h$. In the numerical tests of Section~\ref{sec:num.res} we take $\gamma=2\mu$ for the linear~\eqref{eq:LinCauchy} and second-order~\eqref{eq:SecondOrder} models and $\gamma = 2\tilde{\mu}(\vec0)$ for the Hencky--Mises model~\eqref{eq:HenckyMises}.  
In $s_T$, we penalize in a least-square sense the face-based residual $\vec{\Delta}_{TF}^k:\UT\to\Poly{k}(F;\Real^d)$ such that, for all $T\in\Th$, all $\uvv[T]\in\UT$, and all $F\in\Fh[T]$, 
\begin{equation}\label{eq:DTF}
  \vec{\Delta}_{TF}^k\uvv[T]\eqbydef\vlproj[F]{k}(\rT\uvv[T]-\vv[F]) - \vlproj[T]{k}(\rT\uvv[T]-\vv[T]).
\end{equation}
This particular choice ensures that $\vec{\Delta}_{TF}^k$ vanishes whenever its argument is of the form $\IT\vw$ with 
$\vw\in\Poly{k+1}(T;\Real^d)$, a crucial property to obtain an energy-norm error estimate in $h^{k+1}$; 
cf. Theorem~\ref{th:error_estimate}.
Additionally, $s_h$ is stabilizing in the sense that the following uniform norm equivalence holds (the proof is a 
straightforward modification of~\cite[Lemma~4]{Di-Pietro.Ern:15}; cf. also Corollary~6 therein):
There exists a real number $\eta>0$ independent of $h$ such that, for all $\uvv[h]\in\UhD$,
\begin{equation}
  \label{eq:norm.equivalence}
  \eta^{-1}\norm[\epsilon,h]{\uvv[h]}^2
  \le\norm[]{\Ghs\uvv[h]}^2+ s_h(\uvv[h],\uvv[h])\le\eta\norm[\epsilon,h]{\uvv[h]}^2.
\end{equation} 
By~\eqref{eq:hypo.coercivity}, this implies the coercivity of $a_h$.

\subsection{Main results}\label{sec:main.res}

In this section we collect the main results of this paper. The proofs are postponed to Section~\ref{sec:analysis}.
We start by discussing existence and uniqueness of the discrete solution.
\begin{theorem}[Existence and uniqueness of a discrete solution]
  \label{theo:existence}
  Let Assumption~\ref{ass:hypo} hold and let $(\Th)_{h\in{\cal H}}$ be a regular mesh sequence.
  Then, for all $h\in{\cal H}$, there exists at least one solution $\uvu[h]\in\UhD$ 
  to problem~\eqref{eq:discrete.pb}.
  Additionally, if the stress-strain function $\ms$ is strictly monotone (i.e., if the inequality in~\eqref{eq:hypo.monotonicity} is strict for $\matr{\tau}\neq\matr{\eta}$), the solution is unique.
\end{theorem}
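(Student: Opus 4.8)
The plan is to exploit the finite dimension of $\UhD$ and recast the discrete problem~\eqref{eq:discrete.pb} as the search for a zero of a suitable nonlinear operator, to which a standard topological argument applies. Since $\norm[\epsilon,h]{{\cdot}}$ is a norm on $\UhD$ of inner-product type, it is induced by an inner product $(\cdot,\cdot)_{\epsilon,h}$, turning $\UhD$ into a finite-dimensional Hilbert space. I define $\Phi_h:\UhD\to\UhD$ by requiring that, for all $\uvv[h]\in\UhD$,
\begin{equation*}
  (\Phi_h(\uvu[h]),\uvv[h])_{\epsilon,h} = a_h(\uvu[h],\uvv[h]) - \int_\Omega\vf\cdot\vv[h],
\end{equation*}
so that $\uvu[h]$ solves~\eqref{eq:discrete.pb} if and only if $\Phi_h(\uvu[h])=\vec{0}$. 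First I would invoke the classical corollary of Brouwer's fixed point theorem: if $\Phi_h$ is continuous and there exists $R>0$ such that $(\Phi_h(\uvv[h]),\uvv[h])_{\epsilon,h}>0$ for every $\uvv[h]$ with $\norm[\epsilon,h]{\uvv[h]}=R$, then $\Phi_h$ vanishes at some point of the ball of radius $R$, yielding a discrete solution.

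Two properties therefore remain to be established. For continuity, the only nontrivial contribution is $A_h$, cf.~\eqref{eq:Ah}: the map $\uvv[h]\mapsto\Ghs\uvv[h]$ is linear, hence continuous with values in the finite-dimensional space $\Poly{k}(\Th;\Real^{d\times d}_{\mathrm{sym}})$, and I would show that the superposition (Nemytskii) operator associated with $\ms$ maps convergent sequences in $L^2(\Omega;\Real^{d\times d})$ to convergent sequences, using the Caratheodory property~\eqref{eq:hypo.car1}--\eqref{eq:hypo.car2}, the growth bound~\eqref{eq:hypo.growth}, and dominated convergence; the bilinear term $s_h$ is trivially continuous. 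For the sign condition on the sphere, I would combine the coercivity~\eqref{eq:hypo.coercivity} of $\ms$ with the nonnegativity of $s_h$ and the norm equivalence~\eqref{eq:norm.equivalence} to obtain $a_h(\uvv[h],\uvv[h])\ge\min(\lms,1)\,\eta^{-1}\norm[\epsilon,h]{\uvv[h]}^2$, and then dominate the load term by $|\int_\Omega\vf\cdot\vv[h]|\le C_h\norm[\epsilon,h]{\uvv[h]}$, a bound which holds automatically since the load functional is linear on a finite-dimensional space (an $h$-explicit constant being furnished by a discrete Korn inequality controlling $\norm[]{\vv[h]}$ by $\norm[\epsilon,h]{\uvv[h]}$). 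This gives $(\Phi_h(\uvv[h]),\uvv[h])_{\epsilon,h}\ge\norm[\epsilon,h]{\uvv[h]}(\min(\lms,1)\,\eta^{-1}\norm[\epsilon,h]{\uvv[h]}-C_h)$, which is strictly positive once $\norm[\epsilon,h]{\uvv[h]}=R$ is chosen large enough, completing the existence proof.

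For uniqueness under strict monotonicity, I would take two solutions $\uvu[h,1]$ and $\uvu[h,2]$, subtract the corresponding equations~\eqref{eq:discrete.pb}, and test with $\uvv[h]\eqbydef\uvu[h,1]-\uvu[h,2]$. Using the linearity of $\Ghs$ and the bilinearity of $s_h$, this yields
\begin{equation*}
  \int_\Omega\big(\ms(\cdot,\Ghs\uvu[h,1])-\ms(\cdot,\Ghs\uvu[h,2])\big):\big(\Ghs\uvu[h,1]-\Ghs\uvu[h,2]\big) + s_h(\uvv[h],\uvv[h]) = 0.
\end{equation*}
Both summands are nonnegative (the first by monotonicity~\eqref{eq:hypo.monotonicity}, the second by construction), hence both vanish. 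Strict monotonicity then forces $\Ghs\uvu[h,1]=\Ghs\uvu[h,2]$ a.e., i.e.\ $\Ghs\uvv[h]=\vec{0}$; together with $s_h(\uvv[h],\uvv[h])=0$ and the norm equivalence~\eqref{eq:norm.equivalence}, this gives $\norm[\epsilon,h]{\uvv[h]}=0$, whence $\uvu[h,1]=\uvu[h,2]$ since $\norm[\epsilon,h]{{\cdot}}$ is a norm on $\UhD$.

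The step I expect to be most delicate is the continuity of the nonlinear term $A_h$: it is the only place where the measure-theoretic structure of Assumption~\ref{ass:hypo} is genuinely used, and care is needed to pass to the limit in $\ms(\cdot,\Ghs\uvv[h])$ along a convergent sequence of DOFs, extracting an a.e.-convergent subsequence and dominating it by the $L^2$ bound furnished by~\eqref{eq:hypo.growth}. Everything else reduces to the coercivity and monotonicity inequalities combined with the already-established norm equivalence.
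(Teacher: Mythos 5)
Your proposal is correct and follows essentially the same route as the paper: the paper also works in the finite-dimensional Hilbert space $(\UhD,(\cdot,\cdot)_{\epsilon,h})$ and applies a Brouwer-type corollary (surjectivity of continuous coercive maps, citing Deimling), hitting the Riesz representative of the load functional -- which is exactly equivalent to finding a zero of your $\Phi_h$ -- with coercivity obtained, as you do, from~\eqref{eq:hypo.coercivity} and the norm equivalence~\eqref{eq:norm.equivalence}; uniqueness is likewise obtained by testing the difference of the two equations with $\uvu[h,1]-\uvu[h,2]$ and invoking strict monotonicity plus positive semidefiniteness of $s_h$. Your treatment is in fact slightly more complete in two spots where the paper is terse: you verify the continuity of the Nemytskii operator (which the cited surjectivity result requires but the paper does not check), and your uniqueness step makes explicit the role of the norm equivalence in passing from $\Ghs\uvv[h]=\vec{0}$ and $s_h(\uvv[h],\uvv[h])=0$ to $\uvv[h]=\underline{\vec{0}}$.
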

\begin{proof}
  See Section~\ref{sec:analysis:exist.uniq}.
\end{proof}
\begin{remark}[Strict monotonicity of the stress-strain function]
  The strict monotonicity assumption is fulfilled, e.g., by the Hencky--Mises model~\eqref{eq:HenckyMises} and by the damage 
  model~\eqref{eq:Damage} when $D(\vec\tau)=1-f(|\vec\tau|)$, with $f$ continuous, bounded, and such that
  $[0,+\infty)\ni s\mapsto sf(s)$ is strictly increasing.
  We observe, in passing, that the strict monotonicity is weaker than the strong 
  monotonicity~\eqref{eq:hypo_add.smon} used in Theorem~\ref{th:error_estimate} to prove error estimates.
\end{remark}
We then consider the convergence to solutions that only exhibit the minimal regularity required by the variational formulation~\eqref{eq:weak}.
\begin{theorem}[Convergence]
  \label{theo:convergence}
  Let Assumption~\ref{ass:hypo} hold, let $k\ge 1$, and let $(\Th)_{h\in{\cal H}}$ be a regular mesh sequence.
  Further assume the existence of a real number $C_{\rm K}>0$ independent of $h$ but possibly depending on $\Omega$, $\varrho$, and on $k$ such that, for all $\uvv[h]\in\UhD$,
  \begin{equation}
    \label{eq:Korn}
    \norm{\vv[h]} + \norm{\GRADh\vv[h]}\le C_{\rm K}\norm[\epsilon,h]{\uvv[h]},
  \end{equation}
  where $\GRADh$ denotes the broken gradient on $H^{1}(\Th;\Real^d)$.
  For all $h\in{\cal H}$, let $\uvu[h]\in\UhD$ be a solution to the discrete problem~\eqref{eq:discrete.pb} on $\Th$. 
  Then, for all $q$ such that $1\le q<+\infty$ if $d=2$ or $1\le q<6$ if $d=3$, as $h\to 0$ it holds, up to a subsequence, 
  \begin{itemize}
    \item{$\vu[h]\to \vu$ strongly in $L^q(\Omega;\Real^d)$,}
    \item{$\Ghs\uvu[h]\to\GRADs\vu$ weakly in $L^2(\Omega;\Real^{d\times d})$,}
  \end{itemize} 
  where $\vu\in H^1_0(\Omega;\Real^d)$ solves the weak formulation~\eqref{eq:weak}. 
  Moreover, if we assume strict monotonicity for $\ms$ (i.e., the inequality in~\eqref{eq:hypo.monotonicity} is strict for 
  $\matr{\tau}\neq\matr{\eta}$), it holds that
  \begin{itemize}
    \item{$\Ghs\uvu[h]\to\GRADs\vu\quad \text{strongly in }L^2(\Omega;\Real^{d\times d}).$}
  \end{itemize}
  Finally, if the solution to~\eqref{eq:weak} is unique, convergence extends to the whole sequence.
\end{theorem}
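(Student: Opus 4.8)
The plan is to follow the classical compactness approach for monotone problems (in the spirit of Minty--Browder), adapted to the HHO setting as in~\cite{Droniou.Lamichhane:15,Di-Pietro.Droniou:15}. First I would establish a uniform a priori bound on $\norm[\epsilon,h]{\uvu[h]}$: testing~\eqref{eq:discrete.pb} with $\uvv[h]=\uvu[h]$, using the coercivity~\eqref{eq:hypo.coercivity} together with $s_h(\uvu[h],\uvu[h])\ge 0$ and the norm equivalence~\eqref{eq:norm.equivalence} to bound the left-hand side from below by $C\norm[\epsilon,h]{\uvu[h]}^2$, and Cauchy--Schwarz followed by the discrete Korn inequality~\eqref{eq:Korn} to bound the right-hand side $\int_\Omega\vf\cdot\vu[h]$ from above by $\norm{\vf}\,C_{\rm K}\norm[\epsilon,h]{\uvu[h]}$, yields $\norm[\epsilon,h]{\uvu[h]}\le C$ with $C$ depending only on $\lms$, $\eta$, $C_{\rm K}$, and $\norm{\vf}$. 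This bound, combined with the discrete compactness properties of the HHO space collected in Lemma~\ref{lemma:compactness}, furnishes (up to a subsequence) a limit $\vu\in H^1_0(\Omega;\Real^d)$ such that $\vu[h]\to\vu$ strongly in $L^q(\Omega;\Real^d)$ in the stated range of $q$ and $\Ghs\uvu[h]\rightharpoonup\GRADs\vu$ weakly in $L^2(\Omega;\Real^{d\times d})$.

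Next I would pass to the limit in the equation to obtain a limit relation. By the growth condition~\eqref{eq:hypo.growth}, $\ms(\cdot,\Ghs\uvu[h])$ is bounded in $L^2$, hence (up to a further subsequence) $\ms(\cdot,\Ghs\uvu[h])\rightharpoonup\matr{\chi}$ weakly in $L^2$ for some $\matr{\chi}$. Testing~\eqref{eq:discrete.pb} with $\Ih\vv$ for fixed $\vv\in C_c^\infty(\Omega;\Real^d)$, the commuting property~\eqref{eq:commuting} gives $\Ghs\Ih\vv=\vlproj[h]{k}(\GRADs\vv)\to\GRADs\vv$ strongly in $L^2$, so that, pairing a weakly convergent sequence with a strongly convergent one, $A_h(\uvu[h],\Ih\vv)\to\int_\Omega\matr{\chi}:\GRADs\vv$. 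The stabilization term is handled by Cauchy--Schwarz, $|s_h(\uvu[h],\Ih\vv)|\le s_h(\uvu[h],\uvu[h])^{\nicefrac12}s_h(\Ih\vv,\Ih\vv)^{\nicefrac12}$, where the first factor is bounded by the a priori estimate and the second vanishes by the optimal consistency of the stabilization (recall that $\vec{\Delta}_{TF}^k$ annihilates interpolants of polynomials of degree $k+1$). Since the element components of $\Ih\vv$ tend to $\vv$ in $L^2$, the right-hand side converges to $\int_\Omega\vf\cdot\vv$, whence the limit relation $\int_\Omega\matr{\chi}:\GRADs\vv=\int_\Omega\vf\cdot\vv$, valid for all $\vv\in H^1_0(\Omega;\Real^d)$ by density.

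It remains to identify $\matr{\chi}=\ms(\cdot,\GRADs\vu)$, which is the crux of the argument and where monotonicity enters through Minty's trick. The key ingredient is the discrete energy identity $\int_\Omega\ms(\cdot,\Ghs\uvu[h]):\Ghs\uvu[h]=\int_\Omega\vf\cdot\vu[h]-s_h(\uvu[h],\uvu[h])$. Since $s_h\ge 0$ and $\vu[h]\to\vu$ strongly, this yields $\limsup_h\int_\Omega\ms(\cdot,\Ghs\uvu[h]):\Ghs\uvu[h]\le\int_\Omega\vf\cdot\vu=\int_\Omega\matr{\chi}:\GRADs\vu$, the last equality from the limit relation tested against $\vu$. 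Feeding this into the monotonicity inequality~\eqref{eq:hypo.monotonicity} written for the pair $(\Ghs\uvu[h],\matr{\Lambda})$ with arbitrary $\matr{\Lambda}\in L^2(\Omega;\Real^{d\times d}_{\mathrm{sym}})$, and letting $h\to0$ using the weak convergences $\ms(\cdot,\Ghs\uvu[h])\rightharpoonup\matr{\chi}$ and $\Ghs\uvu[h]\rightharpoonup\GRADs\vu$ (the mixed products pair a weak limit with the fixed $L^2$ functions $\matr{\Lambda}$ and $\ms(\cdot,\matr{\Lambda})$, the latter in $L^2$ by growth), gives $\int_\Omega(\matr{\chi}-\ms(\cdot,\matr{\Lambda})):(\GRADs\vu-\matr{\Lambda})\ge 0$ for every $\matr{\Lambda}$. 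Choosing $\matr{\Lambda}=\GRADs\vu-t\matr{\Theta}$, dividing by $t>0$, and letting $t\to0^+$ (passing the limit inside by continuity~\eqref{eq:hypo.car1}, growth, and dominated convergence) yields $\int_\Omega(\matr{\chi}-\ms(\cdot,\GRADs\vu)):\matr{\Theta}\ge0$ for all $\matr{\Theta}$, hence $\matr{\chi}=\ms(\cdot,\GRADs\vu)$ and $\vu$ solves~\eqref{eq:weak}.

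Finally, for the strong convergence under strict monotonicity, once $\matr{\chi}=\ms(\cdot,\GRADs\vu)$ the earlier $\limsup$ bound together with weak lower semicontinuity shows $\int_\Omega\ms(\cdot,\Ghs\uvu[h]):\Ghs\uvu[h]\to\int_\Omega\ms(\cdot,\GRADs\vu):\GRADs\vu$; expanding the nonnegative defect $\int_\Omega(\ms(\cdot,\Ghs\uvu[h])-\ms(\cdot,\GRADs\vu)):(\Ghs\uvu[h]-\GRADs\vu)$ and passing to the limit term by term forces it to zero. Strict monotonicity then upgrades this $L^1$ vanishing of a nonnegative integrand into a.e.\ convergence $\Ghs\uvu[h]\to\GRADs\vu$ (using coercivity~\eqref{eq:hypo.coercivity} to prevent blow-up of $\Ghs\uvu[h]$ on the relevant set and continuity of $\ms$), and a Vitali argument — whose uniform integrability of $|\Ghs\uvu[h]|^2$ again follows from coercivity and the $L^1$ convergence of the defect — promotes this to strong $L^2$ convergence. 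The whole-sequence statement follows when the solution of~\eqref{eq:weak} is unique, since every extracted subsequence then admits a sub-subsequence converging to the same $\vu$. I expect the main obstacle to be precisely the passage to the limit in the nonlinear term: controlling $\limsup_h\int_\Omega\ms(\cdot,\Ghs\uvu[h]):\Ghs\uvu[h]$ through the discrete energy identity so that Minty's trick closes, and, for the strong convergence, extracting a.e.\ convergence from the $L^1$-vanishing defect under only strict rather than strong~\eqref{eq:hypo_add.smon} monotonicity.
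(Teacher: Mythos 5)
Your overall architecture coincides with the paper's proof: a priori bound from coercivity, norm equivalence and the Korn inequality~\eqref{eq:Korn}; extraction of limits by compactness; passage to the limit in the scheme tested with $\Ih\vec\phi$ for smooth $\vec\phi$ (strong consistency of $\Ghs\Ih$ via~\eqref{eq:commuting}, vanishing of the stabilization on interpolates); Minty's trick driven by the energy bound $\limsup_h\int_\Omega\ms(\cdot,\Ghs\uvu[h]):\Ghs\uvu[h]\le\int_\Omega\vf\cdot\vu$; and, under strict monotonicity, the nonnegative-defect argument giving a.e.\ convergence of $\Ghs\uvu[h]$ followed by a Vitali argument. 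Your variant of the last step (equi-integrability of $|\Ghs\uvu[h]|^2$ extracted directly from the $L^1$-convergence of the defect via coercivity) is a legitimate mild shortcut with respect to the paper's route through Fatou and an $L^1$-convergence lemma, and your Minty perturbation $\matr{\Lambda}=\GRADs\vu-t\matr{\Theta}$ with arbitrary $\matr{\Theta}$ is an inessential variation of the paper's $\matr{\Lambda}=\GRADs\vu\pm t\GRADs\vv$.

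There is, however, one genuine gap, at the point where you write that the a priori bound ``combined with the discrete compactness properties\dots collected in Lemma~\ref{lemma:compactness}'' furnishes a limit $\vu\in H_0^1(\Omega;\Real^d)$ with $\vu[h]\to\vu$ in $L^q$ \emph{and} $\Ghs\uvu[h]\rightharpoonup\GRADs\vu$ in $L^2$. Lemma~\ref{lemma:compactness} only gives relative compactness of the element unknowns $(\vu[h])_h$ in $L^q$, with a limit known merely to lie in $L^q$; boundedness of $(\norm{\Ghs\uvu[h]})_h$ then yields a weakly convergent subsequence $\Ghs\uvu[h]\rightharpoonup\vec{\mathcal{G}}$, but nothing so far identifies $\vec{\mathcal{G}}$ with $\GRADs\vu$, nor shows that $\vu$ belongs to $H^1$, nor that it has zero trace. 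This identification is precisely where the structure of $\GTs$ enters and it requires the sequential (adjoint) consistency of $\Ghs$, i.e., Proposition~\ref{prop:consistency.Gh}, point 2): the quantity
\[
\int_\Omega\Ghs\uvv[h]:\matr{\tau}+\int_\Omega\vv[h]\cdot(\DIV\matr{\tau})-\int_\Gamma\vv[\Gamma,h]\cdot\vec{\gamma}_{\normal}(\matr{\tau})
\]
tends to zero uniformly over $\norm[\epsilon,h]{\uvv[h]}=1$, a fact proved by a discrete integration by parts together with the approximation properties of $\vlproj[T]{0}$. Applying this with $\matr{\tau}\in C_c^\infty(\Omega;\Real^{d\times d}_{\mathrm{sym}})$ identifies $\vec{\mathcal{G}}=\GRADs\vu$ in the sense of distributions; one then needs the isomorphism of~\cite{Duvant.Lions:76} (an $L^2$ field whose symmetric gradient is in $L^2$ lies in $H^1$) to conclude $\vu\in H^1(\Omega;\Real^d)$, and the density of normal traces in $L^2(\Gamma;\Real^d)$ to obtain $\vu=\vec{0}$ on $\Gamma$. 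Without this block your Minty argument cannot be set up, since it presupposes that the weak limit of $\Ghs\uvu[h]$ is the symmetric gradient of an admissible field in $H_0^1(\Omega;\Real^d)$. Note that the commuting property~\eqref{eq:commuting}, which you do invoke, is not a substitute: it concerns $\Ghs$ applied to interpolates of given functions, not arbitrary bounded sequences of discrete unknowns.
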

\begin{proof}
  See Section~\ref{sec:analysis:convergence}.
\end{proof}
\begin{remark}[Existence of a solution to the continuous problem]
Notice that a side result of the existence of discrete solutions proved in Theorem~\ref{theo:existence} together with the 
convergence results of Theorem~\ref{theo:convergence} is the existence of a solution to the weak 
formulation~\eqref{eq:weak}.
\end{remark}
\begin{remark}[Discrete Korn inequality]
  In Proposition~\ref{prop:Korn} we give a proof of the discrete Korn inequality~\eqref{eq:Korn} based on the results of~\cite{Brenner:04}, which require further assumptions on the mesh.
  While we have the feeling that these assumptions could probably be relaxed, we postpone this topic to a future work.
  Notice that inequality~\eqref{eq:Korn} is not required to prove the error estimate of Theorem~\ref{th:error_estimate}.
\end{remark}

In order to prove error estimates, we stipulate the following additional assumptions on the 
stress-strain function $\ms$.
\begin{assumption}[Stress-strain relation II]
  \label{ass:hypo_add}
There exist real numbers $\sigma^*,\sigma_*\in(0,+\infty)$ such that, for a.e. $\vec{x}\in\Omega$, and all 
$\matr{\tau},\matr{\eta}\in\Real^{d\times d}_{\mathrm{sym}}$,
\begin{subequations}
  \label{eq:hypo_add}
  \begin{alignat}{2} 
    \label{eq:hypo_add.lip}
    &\norm[d\times d]{\ms(\vec{x},\matr{\tau})-\ms(\vec{x},\matr{\eta})}\le \sigma^*\norm[d\times d]{\matr{\tau}-\matr{\eta}},
    &\quad &\text{(Lipschitz continuity)}
    \\ 
    \label{eq:hypo_add.smon}
    &\left(\ms(\vec{x},\matr{\tau})-\ms(\vec{x},\vec\eta)\right):\left(\matr{\tau}-\matr{\eta}\right)\ge\sigma_*
    \norm[d\times d]{\vec\tau-\vec\eta}^2.
    &\quad &\text{(strong monotonicity)}
  \end{alignat}
\end{subequations}
\end{assumption}
\begin{remark}[Lipschitz continuity and strong monotonocity]
It has been proved in~\cite[Lemma 4.1]{Barrientos.Gatica.ea:02} that, under the assumptions~\eqref{eq:condition_phi}, the 
stress-strain tensor function for the Hencky--Mises model is strongly monotone and Lipschitz-continuous, namely 
Assumption~\ref{ass:hypo_add} holds. 
Also the isotropic damage model satisfies Assumption~\ref{ass:hypo_add} if the damage function in~\eqref{eq:Damage} is, for instance, such that $D(|\vec\tau|)=1-(1+|\vec\tau|)^{-\frac12}$.   
\end{remark}  

\begin{theorem}[Error estimate]
  \label{th:error_estimate}
  Let Assumptions~\ref{ass:hypo} and~\ref{ass:hypo_add} hold, and let $(\Th)_{h\in{\cal H}}$ be a regular mesh sequence.
  Let $\vu$ be the unique solution to~\eqref{eq:ne.strong}. Let a polynomial degree $k\ge 1$ be fixed, and, for all $h\in{\cal H}$, let $\uvu[h]$ be the unique solution 
to~\eqref{eq:discrete.pb} on the mesh $\Th$. Then, under the additional regularity $\vu\in H^{k+2}(\Th;\Real^d)$ and 
$\ms(\cdot,\GRADs\vu)\in H^{k+1}(\Th;\Real^{d\times d})$, it holds
\begin{equation}
  \label{eq:err_est}
  \norm{\GRADs\vu-\Ghs\uvu[h]} + s_h(\uvu,\uvu)^{\nicefrac12}\le C h^{k+1}
  \left( \norm[H^{k+2}(\Th;\Real^d)]{\vu}+\norm[H^{k+1}(\Th;\Real^{d\times d})]{\ms(\cdot,\GRADs\vu)}\right),
\end{equation} 
where $C$ is a positive constant depending only on $\Omega$, $k$, the mesh regularity parameter $\varrho$, the real 
numbers $\ums$, $\lms$, $\sigma^*$, $\sigma_*$ appearing in~\eqref{eq:hypo} and in~\eqref{eq:hypo_add}, and an upper bound 
of $\norm{\vf}$.
\end{theorem}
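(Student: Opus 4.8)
The plan is to control the discrete error $\uve[h]\eqbydef\uvu[h]-\Ih\vu$ in the energy seminorm $\norm[\epsilon,h]{{\cdot}}$, using strong monotonicity for the lower bound and a consistency argument for the upper bound, and then to recover~\eqref{eq:err_est} by a triangle inequality combined with the commuting property~\eqref{eq:commuting}. First I would note that, since $\vu\in H^1_0(\Omega;\Real^d)$, the boundary components $\vlproj[F]{k}\vu$ of $\Ih\vu$ vanish, so $\Ih\vu\in\UhD$ and hence $\uve[h]\in\UhD$. Testing~\eqref{eq:discrete.pb} with $\uve[h]$ and subtracting $a_h(\Ih\vu,\uve[h])$, the pointwise strong monotonicity~\eqref{eq:hypo_add.smon} applied in $A_h$ (with $\matr\tau=\Ghs\uvu[h]$, $\matr\eta=\Ghs\Ih\vu$), the positivity of $s_h$, and the norm equivalence~\eqref{eq:norm.equivalence} give, with $C_1=\min(\sigma_*,1)\eta^{-1}$,
$$C_1\,\norm[\epsilon,h]{\uve[h]}^2\le A_h(\uvu[h],\uve[h])-A_h(\Ih\vu,\uve[h])+s_h(\uve[h],\uve[h])=\mathcal{E}_h(\uve[h]),$$
where the consistency error is $\mathcal{E}_h(\uvv[h])\eqbydef\int_\Omega\vf\cdot\vv[h]-A_h(\Ih\vu,\uvv[h])-s_h(\Ih\vu,\uvv[h])$; the last equality follows from the discrete equation and the bilinearity of $s_h$.

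The heart of the proof is the consistency estimate $|\mathcal{E}_h(\uvv[h])|\le Ch^{k+1}(\norm[H^{k+2}(\Th)]{\vu}+\norm[H^{k+1}(\Th)]{\ms(\cdot,\GRADs\vu)})\,\norm[\epsilon,h]{\uvv[h]}$. To obtain it, I would write $\vf=-\DIV\ms(\cdot,\GRADs\vu)$, integrate by parts element by element, and use that $\ms(\cdot,\GRADs\vu)$ has single-valued normal traces across interfaces (it belongs to $H(\opdiv;\Omega)$) together with $\vv[F]=\vec0$ on boundary faces to insert the face unknowns at no cost, recasting $\int_\Omega\vf\cdot\vv[h]$ as a sum of element terms $\int_T\ms(\cdot,\GRADs\vu):\GRADs\vv[T]$ and face terms tested against $(\vv[F]-\vv[T])$. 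On the other side, using the commuting property $\Ghs\Ih\vu=\vlproj{k}(\GRADs\vu)$ and the definition~\eqref{eq:GTs} of $\GTs$ against the symmetric polynomial $\vlproj[T]{k}\ms(\cdot,\GRADs\vu)$, the volume contributions cancel because $\GRADs\vv[T]\in\Poly{k-1}(T;\Real^{d\times d}_{\mathrm{sym}})$ is $L^2$-orthogonal to $\ms(\cdot,\GRADs\vu)-\vlproj[T]{k}\ms(\cdot,\GRADs\vu)$. What survives is the face residual $\sum_{T}\sum_{F\in\Fh[T]}\int_F\big((\ms(\cdot,\GRADs\vu)-\vlproj[T]{k}\ms(\cdot,\GRADs\vu))\normal_{TF}\big)\cdot(\vv[F]-\vv[T])$, which I would bound by Cauchy--Schwarz, the trace approximation~\eqref{eq:approx.lproj.trace} producing the factor $h_T^{k+\nicefrac12}\seminorm[H^{k+1}(T)]{\ms(\cdot,\GRADs\vu)}$, and the weight $h_F^{-\nicefrac12}$ hidden in $\norm[\epsilon,h]{{\cdot}}$; this yields the $\norm[H^{k+1}(\Th)]{\ms(\cdot,\GRADs\vu)}$ contribution.

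Two further terms remain in $\mathcal{E}_h$. The mismatch $A_h(\Ih\vu,\uvv[h])-\sum_T\int_T\ms(\cdot,\GRADs\vu):\GTs\uvv[T]$ equals $\sum_T\int_T\big(\ms(\cdot,\vlproj[T]{k}\GRADs\vu)-\ms(\cdot,\GRADs\vu)\big):\GTs\uvv[T]$, which I would control through Lipschitz continuity~\eqref{eq:hypo_add.lip}, the approximation~\eqref{eq:approx.lproj} of $\GRADs\vu$, and $\norm{\Ghs\uvv[h]}\le\eta^{\nicefrac12}\norm[\epsilon,h]{\uvv[h]}$, giving a $\sigma^*h^{k+1}\norm[H^{k+2}(\Th)]{\vu}$ term. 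For the stabilization term I would use Cauchy--Schwarz in $s_h$ and the polynomial consistency of $\vec\Delta_{TF}^k$ (it vanishes on $\IT\vw$ for $\vw\in\Poly{k+1}(T;\Real^d)$), which, exactly as in~\cite[Lemma~4]{Di-Pietro.Ern:15}, gives $s_h(\Ih\vu,\Ih\vu)^{\nicefrac12}\le Ch^{k+1}\seminorm[H^{k+2}(\Th)]{\vu}$ and hence $|s_h(\Ih\vu,\uvv[h])|\le Ch^{k+1}\seminorm[H^{k+2}(\Th)]{\vu}\,\norm[\epsilon,h]{\uvv[h]}$. Collecting these bounds and choosing $\uvv[h]=\uve[h]$ yields $\norm[\epsilon,h]{\uve[h]}\le Ch^{k+1}(\ldots)$. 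To finish, I would split $\GRADs\vu-\Ghs\uvu[h]=(\GRADs\vu-\vlproj{k}\GRADs\vu)-\Ghs\uve[h]$ via~\eqref{eq:commuting}, bound the first summand by~\eqref{eq:approx.lproj} and the second by~\eqref{eq:norm.equivalence}, and similarly write $s_h(\uvu[h],\uvu[h])^{\nicefrac12}\le s_h(\uve[h],\uve[h])^{\nicefrac12}+s_h(\Ih\vu,\Ih\vu)^{\nicefrac12}$.

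The main obstacle is the consistency estimate in the presence of the nonlinearity: since $A_h$ is nonlinear there is no Galerkin orthogonality, so the argument must sandwich between strong monotonicity (for coercivity) and Lipschitz continuity (to pass from $\ms(\cdot,\GRADs\vu)$ to its value at the projected gradient), while carefully separating the two regularity sources---the stress regularity $\ms(\cdot,\GRADs\vu)\in H^{k+1}(\Th)$ entering the face residual, and the displacement regularity $\vu\in H^{k+2}(\Th)$ entering the Lipschitz and stabilization terms. The delicate point is extracting the optimal order $h^{k+1}$ from the face terms, which crucially relies on the fact that only degree-$(k{-}1)$ symmetric gradients appear in the volume term, so that the $L^2$-orthogonality of $\vlproj[T]{k}$ annihilates it and leaves only the higher-order-consistent boundary residual.
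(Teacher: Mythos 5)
Your proposal is correct and follows essentially the same route as the paper's proof: strong monotonicity plus the norm equivalence~\eqref{eq:norm.equivalence} reduce the problem to bounding a conformity error at the interpolate $\Ih\vu$, which is then estimated by element-wise integration by parts of $\vf=-\DIV\fms$, the definition of $\GTs$ tested against a projected stress, Lipschitz continuity~\eqref{eq:hypo_add.lip}, the projector approximation results~\eqref{eq:approx.lproj.el+trace}, and the stabilization consistency~\eqref{eq:stab.app}, before concluding via the triangle inequality and the commuting property~\eqref{eq:commuting}. The only (cosmetic) difference is your insertion of the intermediate term $\sum_{T\in\Th}\int_T\fms\SSCAL\GTs\uvv[T]$, which confines the Lipschitz continuity to a volume term and leaves a face residual in $\fms-\vlproj[T]{k}\fms$ handled by~\eqref{eq:approx.lproj.trace} alone, whereas the paper keeps $\hfms[T]=\ms(\cdot,\GTs\IT\vu)$ throughout and consequently also invokes a discrete trace inequality; both organizations yield the same bound.
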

\begin{proof}
  See Section~\ref{sec:err_est}.
\end{proof}
\begin{remark}[Locking-free error estimate]
The proposed scheme, although different from the one of~\cite{Di-Pietro.Ern:15}, is robust in the quasi-incompressible limit. The reason is that, as a result of the commuting property~\eqref{eq:commuting}, we have $\lproj[T]{k}(\DIV\vv)=\optr(\GTs\IT\vv)$. Thus, considering, e.g., the linear elasticity stress-strain relation~\eqref{eq:LinCauchy}, we can proceed as in~\cite[Theorem 8]{Di-Pietro.Ern:15} in order to prove that, when $\vu\in H^{k+2}(\Th;\Real^d)$ and $\DIV\vu\in H^{k+1}(\Th;\Real)$, and choosing $\gamma=2\mu$, it holds
$$
  (2\mu)^{\nicefrac12}\norm{\GRADs\vu-\Ghs\uvu[h]}\le C h^{k+1}
  \left(2\mu\norm[H^{k+2}(\Th;\Real^d)]{\vu}+\lambda\norm[H^{k+1}(\Th;\Real)]{\DIV\vu}\right),
$$
with real number $C>0$ is independent of $h$, $\mu$ and $\lambda$. The previous bound leads to a locking-free estimate; see~\cite[Remark 9]{Di-Pietro.Ern:15}. Note that the locking-free nature of polyhedral element methods has also been observed in~\cite{Wang.Wang:16} for the Weak Galerkin method and in~\cite{Beirao-da-Veiga.Brezzi.ea:13*1} for the Virtual Element method.
\end{remark}
%
%
\section{Local principle of virtual work and law of action and \mbox{reaction}}\label{sec:vw}

We show in this section that the solution of the discrete problem~\eqref{eq:discrete.pb} satisfies inside each element a local principle of virtual work with numerical tractions that obey the law of action and reaction.
This property is important from both the mathematical and engineering points of view, and it can simplify the derivation of a posteriori error estimators based on equilibrated tractions; see, e.g.,~\cite{Ainsworth.Oden:93,Nicaise.Witowski:08}. It is worth emphasizing that local equilibrium properties on the primal mesh are a distinguishing feature of hybrid (face-based) methods: the derivation of similar properties for vertex-based methods usually requires to perform reconstructions on a dual mesh.

Define, for all $T\in\Th$, the space
$$
\DpT\eqbydef\bigtimes_{F\in\Fh[T]}\Poly{k}(F;\Real^d),
$$
as well as the boundary difference operator $\dpT:\UT\to\DpT$ such that, for all $\uvv[T]\in\UT$,
$$
\dpT\uvv[T]
= (\dF\uvv[T])_{F\in\Fh[T]}
\eqbydef (\vv[F]-\vv[T])_{F\in\Fh[T]}.
$$
The following proposition shows that the stabilization can be reformulated in terms of boundary differences.
\begin{proposition}[Reformulation of the local stabilization bilinear form]
For all mesh element $T\in\Th$, the local stabilization bilinear form $s_T$ defined by~\eqref{eq:sT} satisfies, for all $\uvu[T],\uvv[T]\in\UT$,
\begin{equation}
  \label{eq:sT_rewritten}
  s_T(\uvu[T],\uvv[T]) = s_T((\vec{0},\dpT\uvu[T]),(\vec{0},\dpT\uvv[T])).
\end{equation}
\end{proposition}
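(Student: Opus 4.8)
The plan is to reduce everything to a single linear identity for the face residual operator, namely that $\vec{\Delta}_{TF}^k\uvv[T]$ depends on $\uvv[T]$ only through the boundary differences $\dpT\uvv[T]$. Since the local stabilization $s_T$ in~\eqref{eq:sT} is assembled entirely from the products $\vec{\Delta}_{TF}^k\uvu[T]\cdot\vec{\Delta}_{TF}^k\uvv[T]$, it suffices to prove that for every $T\in\Th$, every $F\in\Fh[T]$, and every $\uvv[T]\in\UT$ one has $\vec{\Delta}_{TF}^k\uvv[T]=\vec{\Delta}_{TF}^k(\vec{0},\dpT\uvv[T])$; inserting this into~\eqref{eq:sT} for both arguments and using bilinearity then gives~\eqref{eq:sT_rewritten} at once.

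First I would record that $\vec{\Delta}_{TF}^k$ is linear on $\UT$, being built from the linear maps $\rT$, $\vlproj[T]{k}$ and $\vlproj[F]{k}$ through~\eqref{eq:DTF}. The crux is then the additive decomposition $\uvv[T]=(\vec{0},\dpT\uvv[T])+\IT\vv[T]$, which I would verify componentwise: the element component is $\vec{0}+\vlproj[T]{k}\vv[T]=\vv[T]$, because $\vv[T]\in\Poly{k}(T;\Real^d)$ is left invariant by $\vlproj[T]{k}$; and the component on each $F\in\Fh[T]$ is $(\vv[F]-\vv[T])+\vlproj[F]{k}\vv[T]=\vv[F]$, because the trace of $\vv[T]$ on $F$ lies in $\Poly{k}(F;\Real^d)$ and is hence fixed by $\vlproj[F]{k}$.

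I would then invoke the polynomial-consistency property of the residual recalled immediately after~\eqref{eq:DTF}: $\vec{\Delta}_{TF}^k\IT\vw=\vec{0}$ for all $\vw\in\Poly{k+1}(T;\Real^d)$. As $\vv[T]\in\Poly{k}(T;\Real^d)\subset\Poly{k+1}(T;\Real^d)$, this yields $\vec{\Delta}_{TF}^k\IT\vv[T]=\vec{0}$. Combining linearity with the decomposition gives $\vec{\Delta}_{TF}^k\uvv[T]=\vec{\Delta}_{TF}^k(\vec{0},\dpT\uvv[T])+\vec{\Delta}_{TF}^k\IT\vv[T]=\vec{\Delta}_{TF}^k(\vec{0},\dpT\uvv[T])$, which is the sought identity.

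There is no real obstacle in this argument: it amounts to the observation that adding $\IT\vv[T]$ to the argument leaves the residual untouched, so $s_T$ can only see the boundary differences. The single point deserving care is the decomposition $\uvv[T]=(\vec{0},\dpT\uvv[T])+\IT\vv[T]$, i.e. checking that both $\vlproj[T]{k}$ and the face projectors $\vlproj[F]{k}$ act as the identity on $\vv[T]$ and on its traces; this is where the matching of the polynomial degrees of the element and face unknowns with the degree of $\vv[T]$ is used.
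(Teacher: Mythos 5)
Your proof is correct and follows essentially the same route as the paper: both arguments hinge on the decomposition $\uvv[T]=\IT\vv[T]+(\vec{0},\dpT\uvv[T])$ and on the fact that the face residual is blind to the $\IT\vv[T]$ part, so that $\vec{\Delta}_{TF}^k\uvv[T]=\vec{\Delta}_{TF}^k(\vec{0},\dpT\uvv[T])$ and the identity~\eqref{eq:sT_rewritten} follows by bilinearity of $s_T$. The only cosmetic difference is that you invoke the polynomial consistency property stated after~\eqref{eq:DTF}, namely $\vec{\Delta}_{TF}^k\IT\vw=\vec{0}$ for all $\vw\in\Poly{k+1}(T;\Real^d)$, as a black box, whereas the paper obtains the same cancellation directly from the projector property $\rT\IT\vv[T]=\vv[T]$ and the linearity of $\rT$.
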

\begin{proof}
  Let a mesh element $T\in\Th$ be fixed.
  Using the fact that $\rT\IT\vv[T]=\vv[T]$ for all $\vv[T]\in\Poly{k}(T)^d$ (this because $\rT\IT$ is a projector on $\Poly{k+1}(T;\Real^d)$, cf.~\cite[Eq. (20)]{Di-Pietro.Ern:15}) together with the linearity of $\rT$, it is inferred that, for all $F\in\Fh[T]$, the face-based residual defined by~\eqref{eq:DTF} satisfies
$$
  \vec{\Delta}_{TF}^k\uvv[T]
  = \vlproj[F]{k}(\rT(\vec{0},\dpT\uvv[T]) - \dF\uvv[T]) - \vlproj[T]{k}\rT(\vec{0},\dpT\uvv[T]) 
  = \vec{\Delta}_{TF}^k(\vec{0},\dpT\uvv[T])
$$
for all $\uvv[T]\in\UT$. Plugging this expression into~\eqref{eq:sT} yields the assertion.
\end{proof}
Define now the boundary residual operator $\uRpT:\UT\to\DpT$ such that, for all $\uvv[T]\in\UT$, $\uRpT\uvv[T]\eqbydef(\vec{R}_{TF}^k\uvv[T])_{F\in\Fh[T]}$ satisfies
\begin{equation}\label{eq:RTF}
  -\sum_{F\in\Fh[T]}\int_F\vec{R}_{TF}^k\uvv[T]\SCAL\valpha[F]
  = s_T((\vec{0},\dpT\uvv[T]),(\vec{0},\uvalpha)) \qquad\forall\uvalpha\in\DpT.
\end{equation}
Problem~\eqref{eq:RTF} is well-posed, and computing $\vec{R}_{TF}^k\uvv[T]$ requires to invert the boundary mass matrix.
\begin{lemma}[Local principle of virtual work and law of action and reaction]
  Denote by $\uvu[h]\in\UhD$ a solution of problem~\eqref{eq:discrete.pb} and, for all $T\in\Th$ and all $F\in\Fh[T]$, define the numerical traction
  $$
  \vec{T}_{TF}(\uvu[T])\eqbydef-\vlproj[T]{k}\ms(\cdot,\GTs\uvu[T])\normal_{TF} + \vec{R}_{TF}^k\uvu[T].
  $$
  Then, for all $T\in\Th$ we have the following discrete principle of virtual work: For all $\vv[T]\in\Poly{k}(T;\Real^d)$,
  \begin{equation}\label{eq:virtual.work}
    \int_T\ms(\cdot,\GTs\uvu[T]):\GRADs\vv[T]
    +\sum_{F\in\Fh[T]}\int_F\vec{T}_{TF}(\uvu[T])\cdot\vv[T]
    = \int_T\vf\cdot\vv[T],
  \end{equation}
  and, for any interface $F\in\Fh[T_1]\cap\Fh[T_2]$, the numerical tractions satisfy the law of action and reaction:
  \begin{equation}\label{eq:action.reaction}
    \vec{T}_{T_1F}(\uvu[T_1])+\vec{T}_{T_2F}(\uvu[T_2])=\vec{0}.
  \end{equation}
\end{lemma}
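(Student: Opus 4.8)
The plan is to derive both identities by testing the discrete problem~\eqref{eq:discrete.pb} against two families of judiciously localized elements of $\UhD$, converting the resulting expressions by means of the reformulation of $s_T$ just proved together with the boundary residual operator $\uRpT$ introduced in~\eqref{eq:RTF}. Throughout, the engine of the argument is the $L^2$-orthogonality of $\vlproj[T]{k}$: since both $\GTs\uvv[T]$ and $\GRADs\vv[T]$ have degree $\le k$, I may freely insert or delete $\vlproj[T]{k}$ in front of $\ms(\cdot,\GTs\uvu[T])$, which is precisely what makes the projected stress appearing in the numerical traction materialize naturally.

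For the local principle of virtual work I would fix $T\in\Th$ and, given $\vv[T]\in\Poly{k}(T;\Real^d)$, plug into~\eqref{eq:discrete.pb} the test function $\uvv[h]\in\UhD$ whose only nonzero component is the element component $\vv[T]$ on $T$ (all face components and all other element components set to $\vec0$); this lies in $\UhD$ precisely because every face component vanishes, and the right-hand side collapses to $\int_T\vf\cdot\vv[T]$. In $A_h$ only the contribution of $T$ survives and, because $\GTs\uvv[T]\in\Poly{k}(T;\Real^{d\times d}_{\mathrm{sym}})$, orthogonality lets me write it as $\int_T\vlproj[T]{k}\ms(\cdot,\GTs\uvu[T]):\GTs\uvv[T]$ and then invoke the definition~\eqref{eq:GTs} of $\GTs\uvv[T]$ with $\matr\tau=\vlproj[T]{k}\ms(\cdot,\GTs\uvu[T])$. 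Since $\GRADs\vv[T]$ has degree $\le k$, orthogonality turns the volume term into $\int_T\ms(\cdot,\GTs\uvu[T]):\GRADs\vv[T]$, while the boundary term (with $\vv[F]=\vec0$) becomes $-\sum_{F\in\Fh[T]}\int_F\vv[T]\cdot(\vlproj[T]{k}\ms(\cdot,\GTs\uvu[T])\normal_{TF})$. For the stabilization I would use the Proposition to get $s_h(\uvu[h],\uvv[h])=s_T((\vec0,\dpT\uvu[T]),(\vec0,\dpT\uvv[T]))$ and then~\eqref{eq:RTF}, applied to $\uvu[T]$ with $\uvalpha=\dpT\uvv[T]$ and using $\dF\uvv[T]=-\vv[T]$, to obtain $\sum_{F\in\Fh[T]}\int_F\vec{R}_{TF}^k\uvu[T]\cdot\vv[T]$. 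Adding the two boundary contributions reconstructs the traction $\vec{T}_{TF}(\uvu[T])=-\vlproj[T]{k}\ms(\cdot,\GTs\uvu[T])\normal_{TF}+\vec{R}_{TF}^k\uvu[T]$, which yields exactly~\eqref{eq:virtual.work}.

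For the law of action and reaction I would fix an interface $F\in\Fh[T_1]\cap\Fh[T_2]$ and test~\eqref{eq:discrete.pb} against the $\uvv[h]\in\UhD$ whose only nonzero component is the face component $\valpha[F]\in\Poly{k}(F;\Real^d)$ on $F$; since all element components vanish, $\vv[h]=\vec0$ and the right-hand side is zero. Only $T_1$ and $T_2$ contribute, and on each $T_i$ the same two manipulations apply, now with vanishing element component, so that $\vec{G}_{{\rm s},T_i}^{k}\uvv[T_i]$ retains only the face term of~\eqref{eq:GTs} and $\dF\uvv[T_i]=\valpha[F]$. This gives $A_h+s_h=\sum_{i\in\{1,2\}}\int_F\valpha[F]\cdot(\vlproj[T_i]{k}\ms(\cdot,\vec{G}_{{\rm s},T_i}^{k}\uvu[T_i])\normal_{T_iF}-\vec{R}_{T_iF}^k\uvu[T_i])=-\int_F\valpha[F]\cdot(\vec{T}_{T_1F}(\uvu[T_1])+\vec{T}_{T_2F}(\uvu[T_2]))$. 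Setting this to zero for every $\valpha[F]\in\Poly{k}(F;\Real^d)$, and observing that each traction itself lies in $\Poly{k}(F;\Real^d)$ (because $\vec{R}_{TF}^k\uvu[T]\in\Poly{k}(F;\Real^d)$ and the trace on the flat $F$ of the degree-$k$ tensor $\vlproj[T]{k}\ms(\cdot,\GTs\uvu[T])$ contracted with the constant $\normal_{TF}$ stays in $\Poly{k}(F;\Real^d)$), the choice $\valpha[F]=\vec{T}_{T_1F}(\uvu[T_1])+\vec{T}_{T_2F}(\uvu[T_2])$ forces~\eqref{eq:action.reaction}.

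The computations are essentially algebraic, so the step I expect to be the main obstacle is not any estimate but the disciplined bookkeeping of the projections: one must keep track, at each replacement, of which quantities have degree $\le k$ so that $\vlproj[T]{k}$ can be slid in or out without committing an error, and then verify that the assembled numerical tractions are genuinely degree-$k$ polynomials on $F$, since it is exactly this polynomial membership that allows the arbitrariness of $\valpha[F]$ to close the action--reaction identity.
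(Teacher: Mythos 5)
Your proposal is correct and follows essentially the same route as the paper: both hinge on testing~\eqref{eq:discrete.pb} with localized test functions (element-supported for~\eqref{eq:virtual.work}, interface-supported for~\eqref{eq:action.reaction}), using the definition~\eqref{eq:GTs.def} of $\GTs$ with $\matr{\tau}=\vlproj[T]{k}\ms(\cdot,\GTs\uvu[T])$, the reformulation~\eqref{eq:sT_rewritten} of $s_T$, and the definition~\eqref{eq:RTF} of $\vec{R}_{TF}^k$. The only cosmetic difference is that the paper first derives the rewritten identity for a generic test function and then localizes, whereas you localize from the outset and also spell out the (implicit in the paper) observation that the tractions belong to $\Poly{k}(F;\Real^d)$, which justifies the final choice $\valpha[F]=\vec{T}_{T_1F}(\uvu[T_1])+\vec{T}_{T_2F}(\uvu[T_2])$.
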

\begin{proof}
  For all $T\in\Th$, use the definition~\eqref{eq:GTs.def} of $\GTs\uvv[T]$ with $\matr{\tau}=\vlproj[T]{k}\ms(\cdot\GTs\uvu[T])$ in $A_h$ and the rewriting~\eqref{eq:sT_rewritten} of $s_T$ together with the definition~\eqref{eq:RTF} of $\vec{R}_{TF}^k$ to infer that it holds, for all $\uvv\in\Uh$,
  \begin{multline*}
    \int_\Omega\vf\cdot\vv[h]
    = A_h(\uvu[h],\uvv[h]) + s_h(\uvu[h],\uvv[h]) =\\
    \sum_{T\in\Th}\left(
    \int_T\ms(\cdot,\GTs\uvu[T]):\GRADs\vv[T] + \sum_{F\in\Fh[T]}\int_F(\vlproj[T]{k}\ms(\cdot,\GTs\uvu[T])\normal_{TF}-\vec{R}_{TF}^k\uvu[T])\cdot(\vv[F]-\vv[T])
    \right),
  \end{multline*}
  where to cancel $\vlproj[T]{k}$ inside the first integral in the second line we have used the fact that $\GRADs\vv[T]\in\Poly{k-1}(T;\Real^{d\times d})$ for all $T\in\Th$.
  Selecting $\uvv[h]$ such that $\vv[T]$ spans $\Poly{k}(T;\Real^d)$ for a selected mesh element $T\in\Th$ while $\vv[T']\equiv\vec{0}$ for all $T'\in\Th\setminus\{T\}$ and $\vv[F]\equiv\vec{0}$ for all $F\in\Fh$, we obtain~\eqref{eq:virtual.work}.
  On the other hand, selecting $\uvv[h]$ such that $\vv[T]\equiv\vec{0}$ for all $T\in\Th$, $\vv[F]$ spans $\Poly{k}(F;\Real^d)$ for a selected interface $F\in\Fh[T_1]\cap\Fh[T_2]$, and $\vv[F']\equiv\vec{0}$ for all $F'\in\Fh\setminus\{F\}$ yields~\eqref{eq:action.reaction}.
\end{proof}
%
\section{Numerical results}\label{sec:num.res}

In this section we present a comprehensive set of numerical tests to assess the properties of our method using the
models of Examples~\ref{ex:lin.cauchy},~\ref{ex:hencky}, and~\ref{ex:second} (cf. also Remark~\ref{rem:hyperelasticity}).
Note that an important step in the implementation of HHO methods consists in selecting a basis for each of the polynomial spaces that appear in the construction. In the numerical tests of the present section, for all $T\in\Th$, we take as a basis for $\Poly{k}(T;\Real^d)$ the Cartesian product of the monomials in the translated and scaled coordinates $\left(h_T^{-1}(x_i-x_{T,i})\right)_{1\le i\le d}$, where $\vec{x}_T$ is the barycenter of $T$. Similarly, for all $F\in\Fh$ we define a basis for $\Poly{k}(F;\Real^d)$ by taking the monomials with respect to a local frame scaled using the face diameter $h_F$ and the middle point of $F$. Further details on implementation aspects are given in~\cite[Section 6.1]{Di-Pietro.Ern:15}.

\subsection{Convergence for the Hencky--Mises model}
\begin{figure}
  \centering
    \includegraphics[height=3cm]{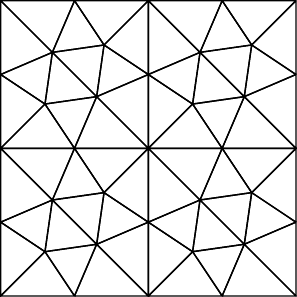}
    \hspace{0.2cm}
    \includegraphics[scale=1]{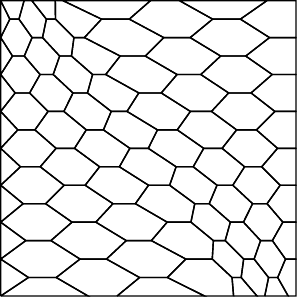}
    \hspace{0.2cm}
    \includegraphics[height=3cm]{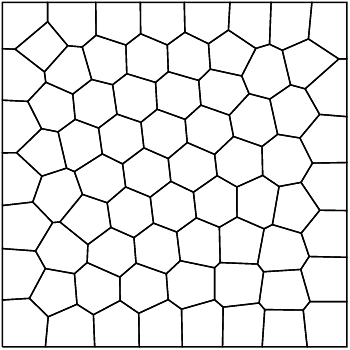}
    \hspace{0.2cm}
    \includegraphics[height=3cm]{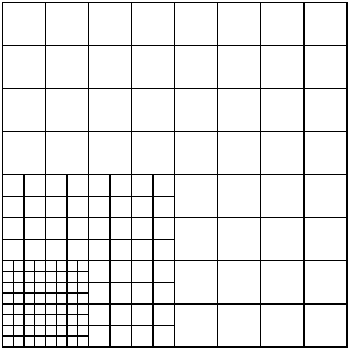}
  \caption{Triangular, hexagonal-dominant, Voronoi, and nonmatching quadrangular meshes for the numerical tests. The 
  triangular and nonmatching quadrangular meshes were originally proposed for the 
  FVCA5 benchmark~\cite{Herbin.Hubert:08}.
  The (predominantly) hexagonal was used in~\cite{Di-Pietro.Lemaire:15}.
  The Voronoi mesh family was obtained using the \textsf{PolyMesher}
  algorithm of~\cite{Talischi2012}.\label{fig:meshes}}
\end{figure} 

In order to check the error estimates stated in Theorem~\ref{th:error_estimate}, we first solve a manufactured 
two-dimensional hyperelasticity problem. We consider the Henky--Mises model
with $\Phi(\rho)=\mu(e^{-\rho} + 2\rho)$ and $\alpha=\lambda+\mu$ in~\eqref{eq:stored_energy_HM}, so that 
conditions~\eqref{eq:condition_phi} are satisfied. This choice leads to the following stress-strain relation:
\begin{equation}
  \label{eq:HM2}
  \ms(\GRADs\vu)=((\lambda-\mu)+\mu e^{-\opdev(\GRADs\vu)})\optr(\GRADs\vu)\Id+\mu(2-e^{-\opdev(\GRADs\vu)})\GRADs\vu.
\end{equation}
We consider the unit square domain $\Omega=(0,1)^{2}$ and take $\mu=2$, $\lambda=1$, and an exact displacement $\vu$ given by
  \begin{equation*}
    \vu(\vec{x}) = \big(\sin(\pi x_1)\sin(\pi x_2), \sin(\pi x_1)\sin(\pi x_2)\big).
  \end{equation*}
The volumetric load $\vf=-\DIV\ms(\GRADs\vu)$ is inferred from the exact solution $\vu$. In this case, since the selected exact displacement vanishes on $\Gamma$, we simply consider homogeneous Dirichlet conditions. 
We consider the triangular, hexagonal, Voronoi, and nonmatching quadrangular mesh families 
depicted in Figure~\ref{fig:meshes} and polynomial degrees $k$ ranging from 1 to 4. The nonmatching mesh is simply meant 
to show that the method supports nonconforming interfaces: refining in the corner has no particular meaning for the 
selected solution. The initialization of our iterative linearization procedure (Newton scheme) is obtained 
solving the linear elasticity model. This initial guess leads to a $40\%$ reduction of the number of iterations with 
respect to a null initial guess. The energy-norm orders of convergence (OCV) displayed in the third column of Tables~\ref{tab:convergence:tria}--\ref{tab:convergence:vor} are in agreement with the theoretical predictions.
In particular, it is observed that the optimal convergence in $h^{k+1}$ is reached for the triangular, nonmatching Cartesian, and hexagonal meshes for $1\le k\le 3$, whereas for $k=4$ the asymptotic convergence order does not appear to have been reached in the last mesh refinement. It can also be observed in Table~\ref{tab:convergence:loc.ref} that the convergence rate exceeds the estimated one on the locally refined Cartesian mesh for $k=1$ and $k=2$.
For the sake of completeness, we also display in the fourth column of Tables~\ref{tab:convergence:tria}--\ref{tab:convergence:vor} the $L^2$-norm of the error defined as the difference between the $L^2$-projection $\vlproj[h]{k}\vu$ of the exact solution on $\Poly{k}(\Th;\Real^d)$ and the broken polynomial function $\vu[h]$ obtained from element-based DOFs, while in the fifth column we display the corresponding observed convergence rates. In this case, orders of convergence up to $h^{k+2}$ are observed.
\begin{table}\centering
  \caption{Convergence results on the triangular mesh family. OCV stands for order of convergence.
  \label{tab:convergence:tria}}
  \begin{tabular}{ccccc}
    \toprule
    $h$ & $\norm{\GRADs\vu-\Ghs\uvu[h]}$ & OCV & $\norm{\vlproj[h]{k}\vu-\vu[h]}$ & OCV \\
    \midrule\multicolumn{5}{c}{$k=1$} \\ \midrule
    \pgfmathprintnumber{3.07e-02}  &  \pgfmathprintnumber{5.59e-02}  &  ---                              &  \pgfmathprintnumber{7.32e-03}  &  ---                               \\
    \pgfmathprintnumber{1.54e-02}  &  \pgfmathprintnumber{1.51e-02}  &  \pgfmathprintnumber{1.90}        &  \pgfmathprintnumber{1.05e-03}  &  \pgfmathprintnumber{2.81}         \\
    \pgfmathprintnumber{7.68e-03}  &  \pgfmathprintnumber{3.86e-03}  &  \pgfmathprintnumber{1.96}        &  \pgfmathprintnumber{1.34e-04}  &  \pgfmathprintnumber{2.96}         \\
    \pgfmathprintnumber{3.84e-03}  &  \pgfmathprintnumber{1.01e-03}  &  \pgfmathprintnumber{1.93}        &  \pgfmathprintnumber{1.70e-05}  &  \pgfmathprintnumber{2.98}         \\
    \pgfmathprintnumber{1.92e-03}  &  \pgfmathprintnumber{2.59e-04}  &  \pgfmathprintnumber{1.96}        &  \pgfmathprintnumber{2.15e-06}  &  \pgfmathprintnumber{2.98}         \\
    \midrule\multicolumn{5}{c}{$k=2$} \\ \midrule
    \pgfmathprintnumber{3.07e-02}  &  \pgfmathprintnumber{1.30e-02}  &  ---                              &  \pgfmathprintnumber{1.47e-03}  &  ---                               \\
    \pgfmathprintnumber{1.54e-02}  &  \pgfmathprintnumber{1.29e-03}  &  \pgfmathprintnumber{3.35}        &  \pgfmathprintnumber{6.05e-05}  &  \pgfmathprintnumber{4.62}         \\
    \pgfmathprintnumber{7.68e-03}  &  \pgfmathprintnumber{2.11e-04}  &  \pgfmathprintnumber{2.60}        &  \pgfmathprintnumber{5.36e-06}  &  \pgfmathprintnumber{3.48}         \\
    \pgfmathprintnumber{3.84e-03}  &  \pgfmathprintnumber{2.73e-05}  &  \pgfmathprintnumber{2.95}        &  \pgfmathprintnumber{3.60e-07}  &  \pgfmathprintnumber{3.90}         \\
    \pgfmathprintnumber{1.92e-03}  &  \pgfmathprintnumber{3.42e-06}  &  \pgfmathprintnumber{3.00}        &  \pgfmathprintnumber{2.28e-08}  &  \pgfmathprintnumber{3.98}         \\
    \midrule\multicolumn{5}{c}{$k=3$} \\ \midrule
    \pgfmathprintnumber{3.07e-02}  &  \pgfmathprintnumber{2.81e-03}  &  ---                              &  \pgfmathprintnumber{2.39e-04}  &  ---                               \\
    \pgfmathprintnumber{1.54e-02}  &  \pgfmathprintnumber{3.72e-04}  &  \pgfmathprintnumber{2.93}        &  \pgfmathprintnumber{1.95e-05}  &  \pgfmathprintnumber{3.63}         \\
    \pgfmathprintnumber{7.68e-03}  &  \pgfmathprintnumber{2.16e-05}  &  \pgfmathprintnumber{4.09}        &  \pgfmathprintnumber{5.47e-07}  &  \pgfmathprintnumber{5.14}         \\
    \pgfmathprintnumber{3.84e-03}  &  \pgfmathprintnumber{1.43e-06}  &  \pgfmathprintnumber{3.92}        &  \pgfmathprintnumber{1.66e-08}  &  \pgfmathprintnumber{5.04}         \\
    \pgfmathprintnumber{1.92e-03}  &  \pgfmathprintnumber{9.51e-08}  &  \pgfmathprintnumber{3.91}        &  \pgfmathprintnumber{5.34e-10}  &  \pgfmathprintnumber{4.96}         \\
    \midrule\multicolumn{5}{c}{$k=4$} \\ \midrule
    \pgfmathprintnumber{3.07e-02}  &  \pgfmathprintnumber{1.37e-03}  &  ---                              &  \pgfmathprintnumber{1.13e-04}  &  ---                               \\
    \pgfmathprintnumber{1.54e-02}  &  \pgfmathprintnumber{5.97e-05}  &  \pgfmathprintnumber{4.54}        &  \pgfmathprintnumber{3.04e-06}  &  \pgfmathprintnumber{5.24}         \\
    \pgfmathprintnumber{7.68e-03}  &  \pgfmathprintnumber{1.76e-06}  &  \pgfmathprintnumber{5.07}        &  \pgfmathprintnumber{4.09e-08}  &  \pgfmathprintnumber{6.19}         \\
    \pgfmathprintnumber{3.84e-03}  &  \pgfmathprintnumber{6.46e-08}  &  \pgfmathprintnumber{4.77}        &  \pgfmathprintnumber{7.64e-10}  &  \pgfmathprintnumber{5.74}         \\
    \bottomrule
  \end{tabular}
\end{table}
\begin{table}\centering
  \caption{Convergence results on the locally refined mesh family. OCV stands for order of convergence.
  \label{tab:convergence:loc.ref}}
  \begin{tabular}{ccccc}
    \toprule
    $h$ & $\norm{\GRADs\vu-\Ghs\uvu[h]}$ & OCV & $\norm{\vlproj[h]{k}\vu-\vu[h]}$ & OCV \\
    \midrule\multicolumn{5}{c}{$k=1$} \\ \midrule    
    \pgfmathprintnumber{2.50e-01}  &  \pgfmathprintnumber{1.28e-01}  &  ---                              &  \pgfmathprintnumber{1.90e-02}  &  ---                               \\
    \pgfmathprintnumber{1.25e-01}  &  \pgfmathprintnumber{2.64e-02}  &  \pgfmathprintnumber{2.28}        &  \pgfmathprintnumber{2.54e-03}  &  \pgfmathprintnumber{2.90}         \\
    \pgfmathprintnumber{6.25e-02}  &  \pgfmathprintnumber{4.97e-03}  &  \pgfmathprintnumber{2.41}        &  \pgfmathprintnumber{3.22e-04}  &  \pgfmathprintnumber{2.98}         \\
    \pgfmathprintnumber{3.12e-02}  &  \pgfmathprintnumber{9.14e-04}  &  \pgfmathprintnumber{2.44}        &  \pgfmathprintnumber{4.12e-05}  &  \pgfmathprintnumber{2.96}         \\
    \pgfmathprintnumber{1.56e-02}  &  \pgfmathprintnumber{1.67e-04}  &  \pgfmathprintnumber{2.45}        &  \pgfmathprintnumber{5.21e-06}  &  \pgfmathprintnumber{2.98}         \\
    \midrule\multicolumn{5}{c}{$k=2$} \\ \midrule
    \pgfmathprintnumber{2.50e-01}  &  \pgfmathprintnumber{1.88e-02}  &  ---                              &  \pgfmathprintnumber{3.79e-03}  &  ---                               \\
    \pgfmathprintnumber{1.25e-01}  &  \pgfmathprintnumber{5.05e-03}  &  \pgfmathprintnumber{1.90}        &  \pgfmathprintnumber{3.55e-04}  &  \pgfmathprintnumber{3.42}         \\
    \pgfmathprintnumber{6.25e-02}  &  \pgfmathprintnumber{6.51e-04}  &  \pgfmathprintnumber{2.96}        &  \pgfmathprintnumber{2.92e-05}  &  \pgfmathprintnumber{3.60}         \\
    \pgfmathprintnumber{3.12e-02}  &  \pgfmathprintnumber{6.83e-05}  &  \pgfmathprintnumber{3.25}        &  \pgfmathprintnumber{1.89e-06}  &  \pgfmathprintnumber{3.94}         \\
    \pgfmathprintnumber{1.56e-02}  &  \pgfmathprintnumber{6.23e-06}  &  \pgfmathprintnumber{3.45}        &  \pgfmathprintnumber{1.19e-07}  &  \pgfmathprintnumber{3.99}         \\
    \midrule\multicolumn{5}{c}{$k=3$} \\ \midrule
    \pgfmathprintnumber{2.50e-01}  &  \pgfmathprintnumber{7.84e-03}  &  ---                              &  \pgfmathprintnumber{1.41e-03}  &  ---                               \\
    \pgfmathprintnumber{1.25e-01}  &  \pgfmathprintnumber{1.09e-03}  &  \pgfmathprintnumber{2.85}        &  \pgfmathprintnumber{7.50e-05}  &  \pgfmathprintnumber{4.23}         \\
    \pgfmathprintnumber{6.25e-02}  &  \pgfmathprintnumber{8.22e-05}  &  \pgfmathprintnumber{3.73}        &  \pgfmathprintnumber{3.93e-06}  &  \pgfmathprintnumber{4.25}         \\
    \pgfmathprintnumber{3.12e-02}  &  \pgfmathprintnumber{5.64e-06}  &  \pgfmathprintnumber{3.86}        &  \pgfmathprintnumber{1.45e-07}  &  \pgfmathprintnumber{4.75}         \\
    \pgfmathprintnumber{1.56e-02}  &  \pgfmathprintnumber{3.44e-07}  &  \pgfmathprintnumber{4.04}        &  \pgfmathprintnumber{5.23e-09}  &  \pgfmathprintnumber{4.79}         \\
    \midrule\multicolumn{5}{c}{$k=4$} \\ \midrule
    \pgfmathprintnumber{2.50e-01}  &  \pgfmathprintnumber{4.35e-03}  &  ---                              &  \pgfmathprintnumber{4.68e-04}  &  ---                               \\
    \pgfmathprintnumber{1.25e-01}  &  \pgfmathprintnumber{3.65e-04}  &  \pgfmathprintnumber{3.58}        &  \pgfmathprintnumber{3.19e-05}  &  \pgfmathprintnumber{3.87}         \\
    \pgfmathprintnumber{6.25e-02}  &  \pgfmathprintnumber{1.50e-05}  &  \pgfmathprintnumber{4.60}        &  \pgfmathprintnumber{6.02e-07}  &  \pgfmathprintnumber{5.73}         \\
    \pgfmathprintnumber{3.12e-02}  &  \pgfmathprintnumber{5.78e-07}  &  \pgfmathprintnumber{4.69}        &  \pgfmathprintnumber{1.03e-08}  &  \pgfmathprintnumber{5.86}         \\
    \bottomrule
  \end{tabular}
\end{table}
\begin{table}\centering
  \caption{Convergence results on the hexagonal mesh family. OCV stands for order of convergence.
  \label{tab:convergence:hex}}
  \begin{tabular}{ccccc}
    \toprule
    $h$ & $\norm{\GRADs\vu-\Ghs\uvu[h]}$ & OCV & $\norm{\vlproj[h]{k}\vu-\vu[h]}$ & OCV \\
    \midrule\multicolumn{5}{c}{$k=1$} \\ \midrule    
    \pgfmathprintnumber{6.30e-02}  &  \pgfmathprintnumber{2.17e-01}  &  ---                              &  \pgfmathprintnumber{2.75e-02}  &  ---                               \\
    \pgfmathprintnumber{3.42e-02}  &  \pgfmathprintnumber{3.72e-02}  &  \pgfmathprintnumber{2.89}        &  \pgfmathprintnumber{3.73e-03}  &  \pgfmathprintnumber{3.27}         \\
    \pgfmathprintnumber{1.72e-02}  &  \pgfmathprintnumber{7.17e-03}  &  \pgfmathprintnumber{2.40}        &  \pgfmathprintnumber{4.83e-04}  &  \pgfmathprintnumber{2.97}         \\
    \pgfmathprintnumber{8.59e-03}  &  \pgfmathprintnumber{1.44e-03}  &  \pgfmathprintnumber{2.31}        &  \pgfmathprintnumber{6.14e-05}  &  \pgfmathprintnumber{2.97}         \\
    \pgfmathprintnumber{4.30e-03}  &  \pgfmathprintnumber{2.40e-04}  &  \pgfmathprintnumber{2.59}        &  \pgfmathprintnumber{7.70e-06}  &  \pgfmathprintnumber{3.00}         \\
    \midrule\multicolumn{5}{c}{$k=2$} \\ \midrule
    \pgfmathprintnumber{6.30e-02}  &  \pgfmathprintnumber{2.68e-02}  &  ---                              &  \pgfmathprintnumber{3.04e-03}  &  ---                               \\
    \pgfmathprintnumber{3.42e-02}  &  \pgfmathprintnumber{7.01e-03}  &  \pgfmathprintnumber{2.20}        &  \pgfmathprintnumber{3.56e-04}  &  \pgfmathprintnumber{3.51}         \\
    \pgfmathprintnumber{1.72e-02}  &  \pgfmathprintnumber{1.09e-03}  &  \pgfmathprintnumber{2.71}        &  \pgfmathprintnumber{3.31e-05}  &  \pgfmathprintnumber{3.46}         \\
    \pgfmathprintnumber{8.59e-03}  &  \pgfmathprintnumber{1.41e-04}  &  \pgfmathprintnumber{2.95}        &  \pgfmathprintnumber{2.53e-06}  &  \pgfmathprintnumber{3.70}         \\
    \pgfmathprintnumber{4.30e-03}  &  \pgfmathprintnumber{1.96e-05}  &  \pgfmathprintnumber{2.85}        &  \pgfmathprintnumber{1.72e-07}  &  \pgfmathprintnumber{3.89}         \\
    \midrule\multicolumn{5}{c}{$k=3$} \\ \midrule
    \pgfmathprintnumber{6.30e-02}  &  \pgfmathprintnumber{1.11e-02}  &  ---                              &  \pgfmathprintnumber{1.08e-03}  &  ---                               \\
    \pgfmathprintnumber{3.42e-02}  &  \pgfmathprintnumber{1.92e-03}  &  \pgfmathprintnumber{2.87}        &  \pgfmathprintnumber{9.29e-05}  &  \pgfmathprintnumber{4.02}         \\
    \pgfmathprintnumber{1.72e-02}  &  \pgfmathprintnumber{2.79e-04}  &  \pgfmathprintnumber{2.81}        &  \pgfmathprintnumber{6.13e-06}  &  \pgfmathprintnumber{3.95}         \\
    \pgfmathprintnumber{8.59e-03}  &  \pgfmathprintnumber{2.54e-05}  &  \pgfmathprintnumber{3.45}        &  \pgfmathprintnumber{2.88e-07}  &  \pgfmathprintnumber{4.40}         \\
    \pgfmathprintnumber{4.30e-03}  &  \pgfmathprintnumber{1.61e-06}  &  \pgfmathprintnumber{3.99}        &  \pgfmathprintnumber{1.24e-08}  &  \pgfmathprintnumber{4.55}         \\
    \midrule\multicolumn{5}{c}{$k=4$} \\ \midrule
    \pgfmathprintnumber{6.30e-02}  &  \pgfmathprintnumber{5.53e-03}  &  ---                              &  \pgfmathprintnumber{4.49e-04}  &  ---                               \\
    \pgfmathprintnumber{3.42e-02}  &  \pgfmathprintnumber{5.76e-04}  &  \pgfmathprintnumber{3.70}        &  \pgfmathprintnumber{3.07e-05}  &  \pgfmathprintnumber{4.39}         \\
    \pgfmathprintnumber{1.72e-02}  &  \pgfmathprintnumber{6.29e-05}  &  \pgfmathprintnumber{3.22}        &  \pgfmathprintnumber{1.21e-06}  &  \pgfmathprintnumber{4.70}         \\
    \pgfmathprintnumber{8.59e-03}  &  \pgfmathprintnumber{2.21e-06}  &  \pgfmathprintnumber{4.82}        &  \pgfmathprintnumber{2.69e-08}  &  \pgfmathprintnumber{5.48}         \\
    \bottomrule
  \end{tabular}
\end{table}
\begin{table}\centering
  \caption{Convergence results on the Voronoi mesh family. OCV stands for order of convergence.
  \label{tab:convergence:vor}}
  \begin{tabular}{ccccc}
    \toprule
    $h$ & $\norm{\GRADs\vu-\Ghs\uvu[h]}$ & OCV & $\norm{\vlproj[h]{k}\vu-\vu[h]}$ & OCV \\
    \midrule\multicolumn{5}{c}{$k=1$} \\ \midrule
    \pgfmathprintnumber{6.50e-02}  &  \pgfmathprintnumber{8.82e-02}  &  ---                              &  \pgfmathprintnumber{1.55e-02}  &  ---                               \\
    \pgfmathprintnumber{3.15e-02}  &  \pgfmathprintnumber{1.49e-02}  &  \pgfmathprintnumber{2.45}        &  \pgfmathprintnumber{2.29e-03}  &  \pgfmathprintnumber{2.64}         \\
    \pgfmathprintnumber{1.61e-02}  &  \pgfmathprintnumber{3.63e-03}  &  \pgfmathprintnumber{2.10}        &  \pgfmathprintnumber{3.01e-04}  &  \pgfmathprintnumber{3.02}         \\
    \pgfmathprintnumber{9.09e-03}  &  \pgfmathprintnumber{8.68e-04}  &  \pgfmathprintnumber{2.50}        &  \pgfmathprintnumber{3.95e-05}  &  \pgfmathprintnumber{3.55}         \\
    \pgfmathprintnumber{4.26e-03}  &  \pgfmathprintnumber{2.04e-04}  &  \pgfmathprintnumber{1.91}        &  \pgfmathprintnumber{4.97e-06}  &  \pgfmathprintnumber{2.74}         \\
    \midrule\multicolumn{5}{c}{$k=2$} \\ \midrule
    \pgfmathprintnumber{6.50e-02}  &  \pgfmathprintnumber{1.43e-02}  &  ---                              &  \pgfmathprintnumber{2.63e-03}  &  ---                               \\
    \pgfmathprintnumber{3.15e-02}  &  \pgfmathprintnumber{4.03e-03}  &  \pgfmathprintnumber{1.75}        &  \pgfmathprintnumber{2.53e-04}  &  \pgfmathprintnumber{3.23}         \\
    \pgfmathprintnumber{1.61e-02}  &  \pgfmathprintnumber{4.78e-04}  &  \pgfmathprintnumber{3.18}        &  \pgfmathprintnumber{2.22e-05}  &  \pgfmathprintnumber{3.63}         \\
    \pgfmathprintnumber{9.09e-03}  &  \pgfmathprintnumber{6.70e-05}  &  \pgfmathprintnumber{3.44}        &  \pgfmathprintnumber{1.45e-06}  &  \pgfmathprintnumber{4.77}         \\
    \pgfmathprintnumber{4.26e-03}  &  \pgfmathprintnumber{9.08e-06}  &  \pgfmathprintnumber{2.64}        &  \pgfmathprintnumber{9.07e-08}  &  \pgfmathprintnumber{3.66}         \\
    \midrule\multicolumn{5}{c}{$k=3$} \\ \midrule
    \pgfmathprintnumber{6.50e-02}  &  \pgfmathprintnumber{7.12e-03}  &  ---                              &  \pgfmathprintnumber{9.08e-04}  &  ---                               \\
    \pgfmathprintnumber{3.15e-02}  &  \pgfmathprintnumber{8.34e-04}  &  \pgfmathprintnumber{2.96}        &  \pgfmathprintnumber{6.78e-05}  &  \pgfmathprintnumber{3.58}         \\
    \pgfmathprintnumber{1.61e-02}  &  \pgfmathprintnumber{7.03e-05}  &  \pgfmathprintnumber{3.69}        &  \pgfmathprintnumber{3.18e-06}  &  \pgfmathprintnumber{4.56}         \\
    \pgfmathprintnumber{9.09e-03}  &  \pgfmathprintnumber{4.17e-06}  &  \pgfmathprintnumber{4.94}        &  \pgfmathprintnumber{9.67e-08}  &  \pgfmathprintnumber{6.11}         \\
    \pgfmathprintnumber{4.26e-03}  &  \pgfmathprintnumber{2.42e-07}  &  \pgfmathprintnumber{3.76}        &  \pgfmathprintnumber{3.15e-09}  &  \pgfmathprintnumber{4.52}         \\
    \midrule\multicolumn{5}{c}{$k=4$} \\ \midrule
    \pgfmathprintnumber{6.50e-02}  &  \pgfmathprintnumber{3.25e-03}  &  ---                              &  \pgfmathprintnumber{3.68e-04}  &  ---                               \\
    \pgfmathprintnumber{3.15e-02}  &  \pgfmathprintnumber{2.94e-04}  &  \pgfmathprintnumber{3.32}        &  \pgfmathprintnumber{2.14e-05}  &  \pgfmathprintnumber{3.93}         \\
    \pgfmathprintnumber{1.61e-02}  &  \pgfmathprintnumber{9.86e-06}  &  \pgfmathprintnumber{5.06}        &  \pgfmathprintnumber{4.34e-07}  &  \pgfmathprintnumber{5.81}         \\
    \pgfmathprintnumber{9.09e-03}  &  \pgfmathprintnumber{3.47e-07}  &  \pgfmathprintnumber{5.85}        &  \pgfmathprintnumber{6.74e-09}  &  \pgfmathprintnumber{7.29}         \\
    \bottomrule
  \end{tabular}
\end{table}

\subsection{Tensile and shear test cases}
\begin{figure}[h!]\footnotesize
  \begin{minipage}[b]{0.29\textwidth}\center
    \hspace{1mm}
    \includegraphics[height=4cm]{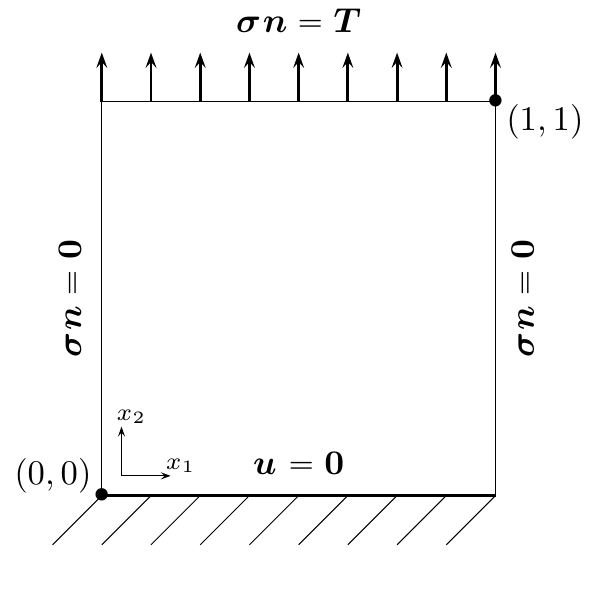}
    \vspace{-1mm}
    \subcaption{Description}
  \end{minipage}
    \begin{minipage}[b]{0.23\textwidth}\center
    \includegraphics[height=2.9cm]{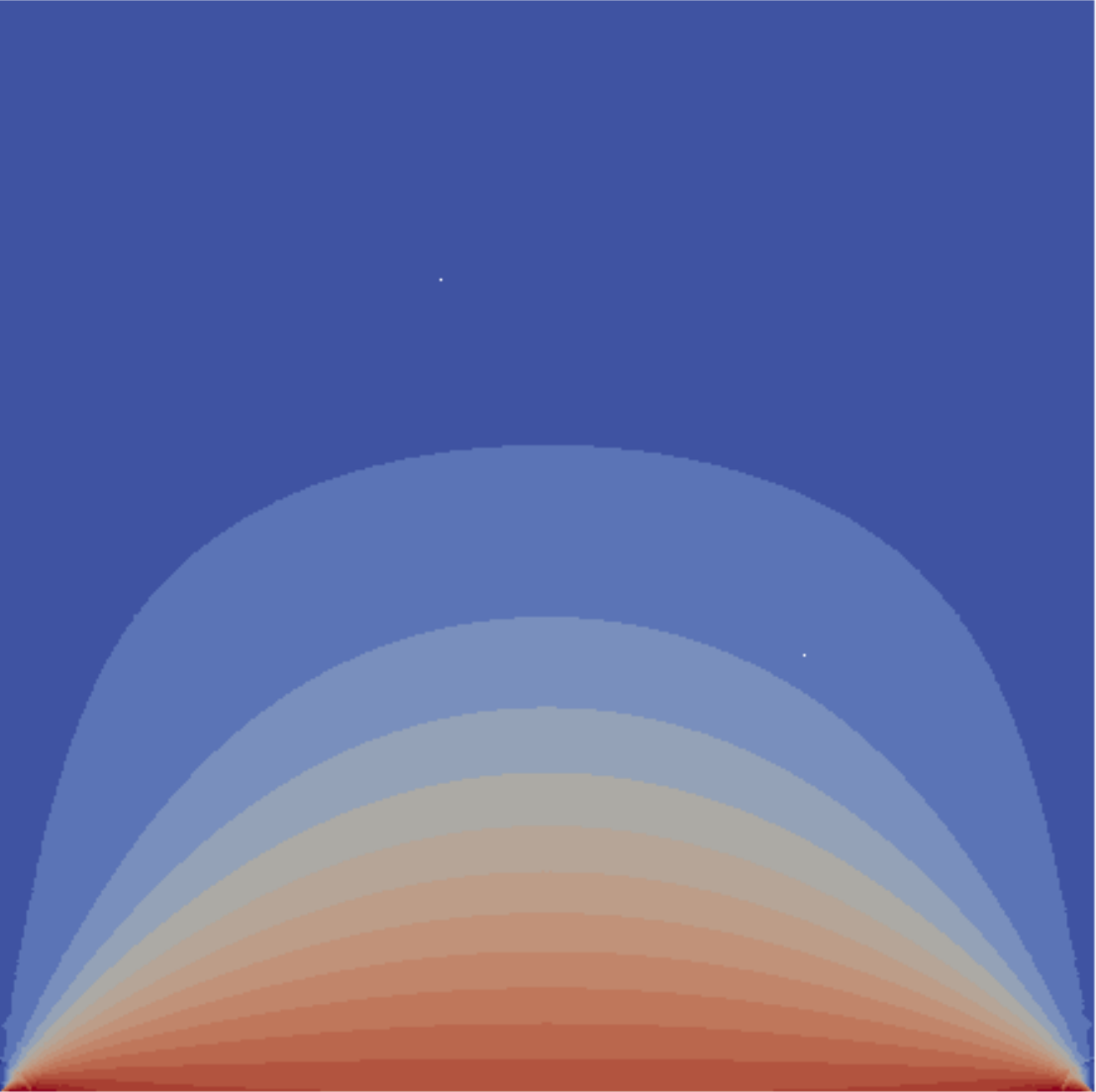}\\
    $\;1.5$ \includegraphics[width=2cm]{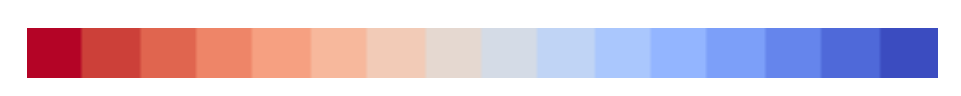} $-0.2$
    \subcaption{${\ms}_{1,1}$}
  \end{minipage}
    \begin{minipage}[b]{0.23\textwidth}\center
    \includegraphics[height=2.9cm]{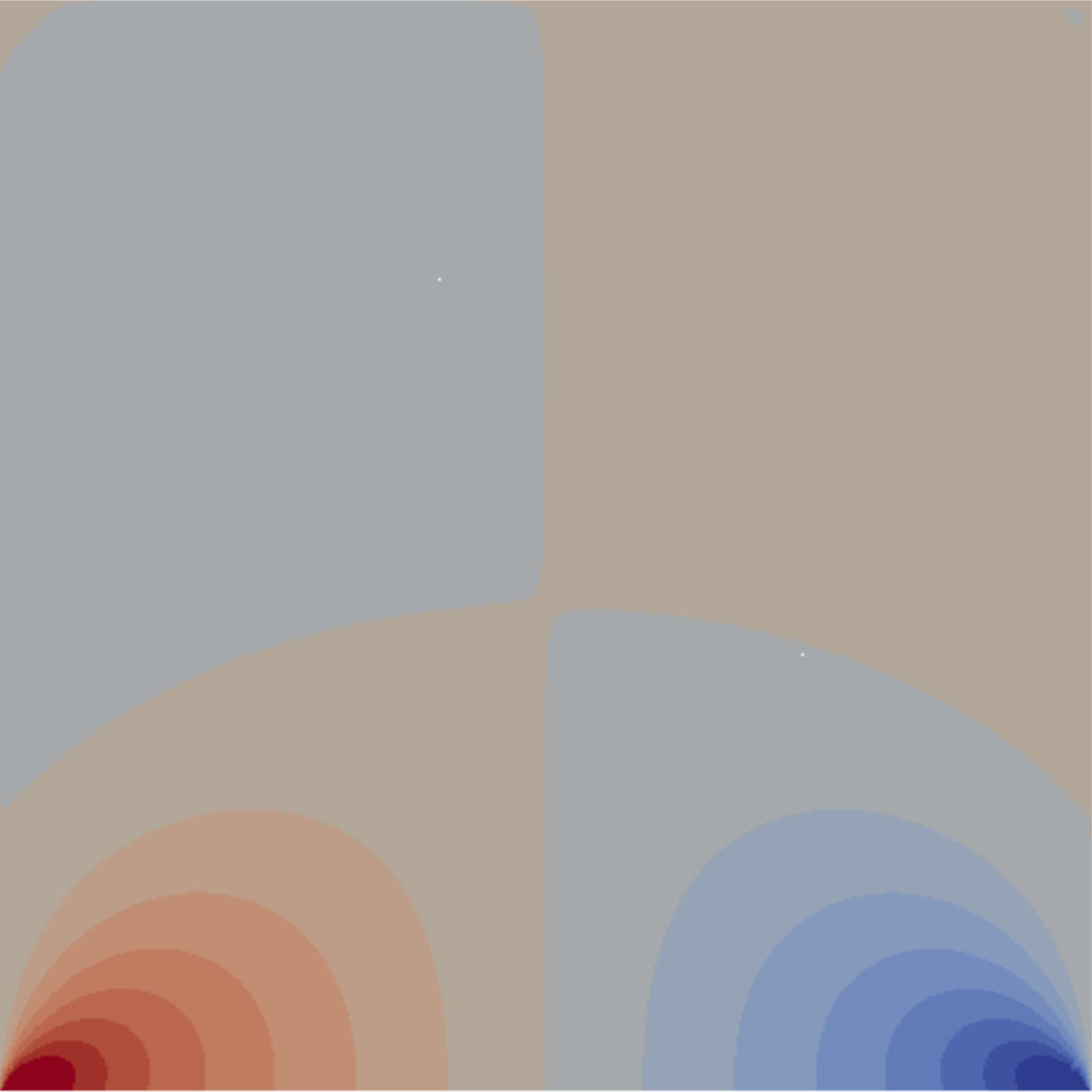}\\
    $\;\;1$ \includegraphics[width=2.2cm]{legend} $-1$
    \subcaption{${\ms}_{1,2}$}
  \end{minipage}
    \begin{minipage}[b]{0.23\textwidth}\center
    \includegraphics[height=2.9cm]{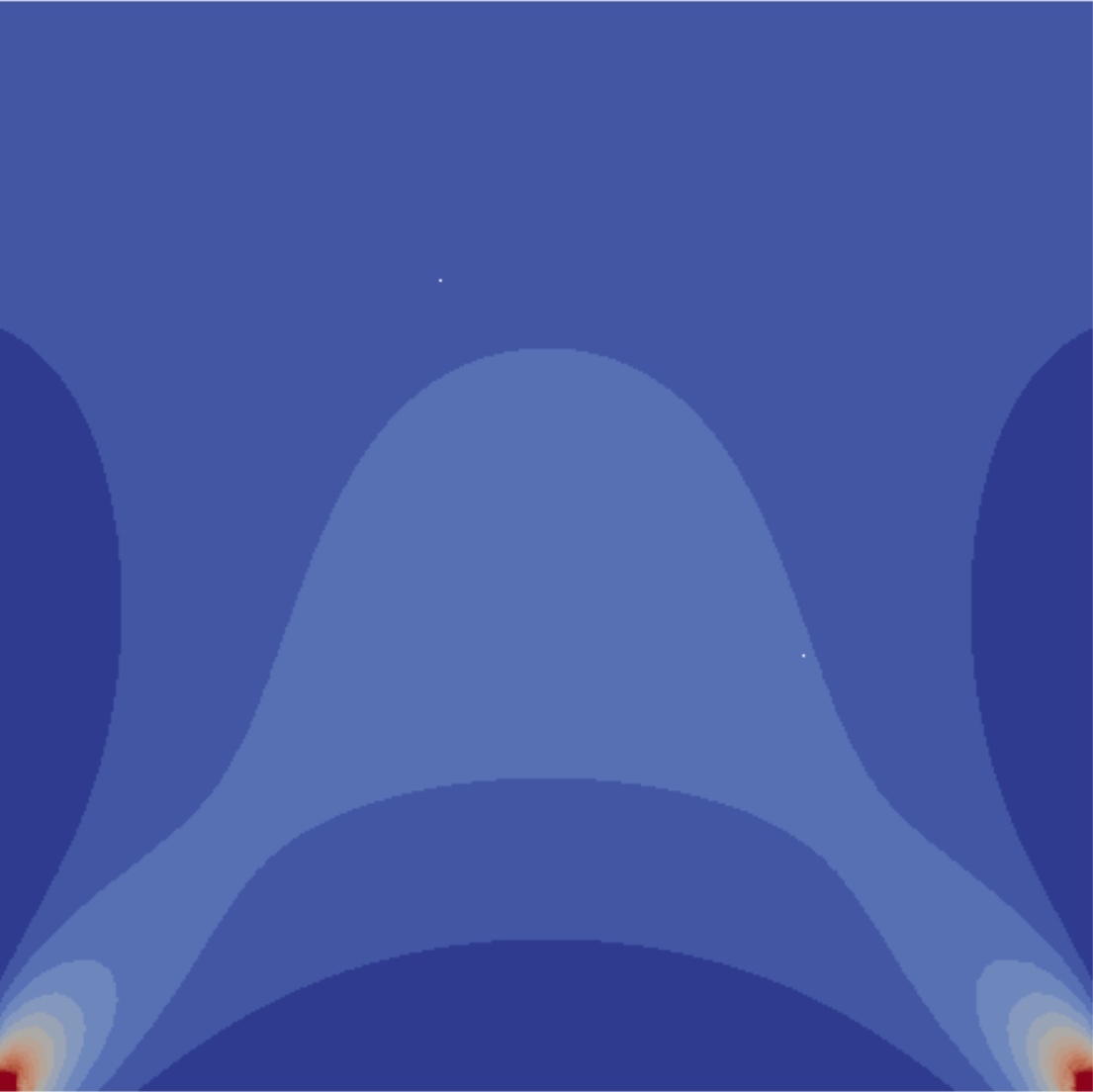}\\
    $\;\;5$ \includegraphics[width=2.3cm]{legend} $3$
    \subcaption{${\ms}_{2,2}$}
  \end{minipage}
 \caption{Tensile test description and resulting stress components for the linear case. Values in $10^5 \rm{Pa}$}\label{Fig:tensile}
\end{figure}
\begin{figure}[h!]\footnotesize
  \begin{minipage}[b]{0.29\textwidth}\center
    \hspace{1mm}
    \includegraphics[height=4cm]{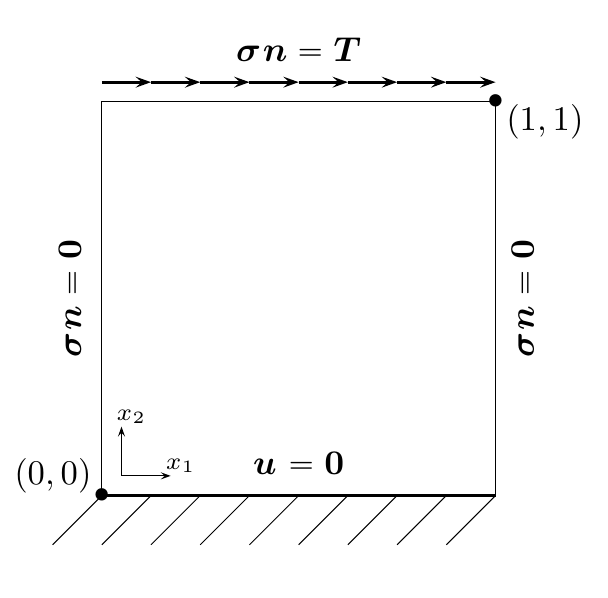}
    \vspace{-1mm}
    \subcaption{Description}
  \end{minipage}
  \begin{minipage}[b]{0.23\textwidth}\center
    \includegraphics[height=2.9cm]{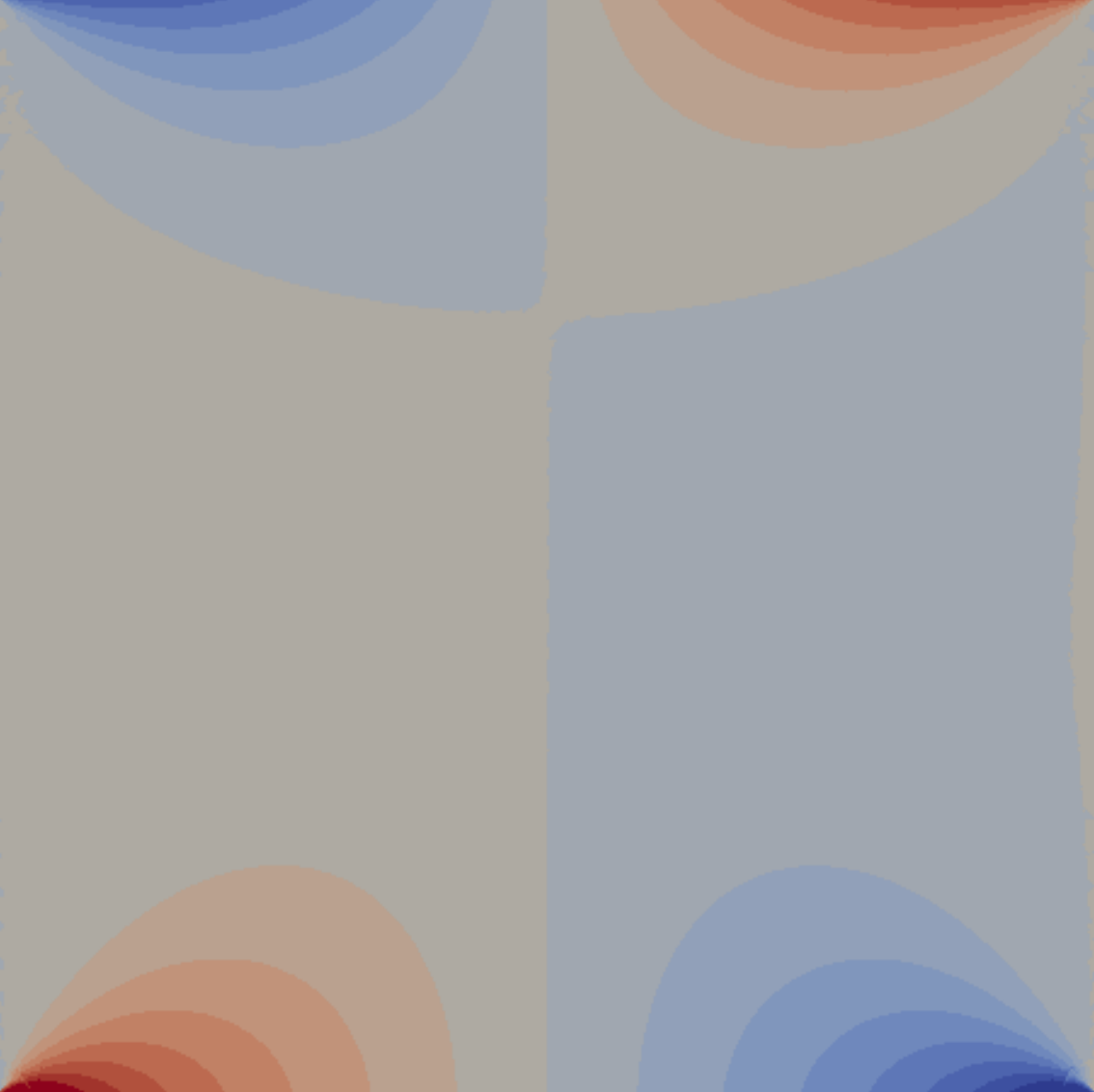}\\
    $\;\;1$ \includegraphics[width=2.3cm]{legend} $-1$
    \subcaption{${\ms}_{1,1}$}
  \end{minipage}
  \begin{minipage}[b]{0.23\textwidth}\center
    \includegraphics[height=2.9cm]{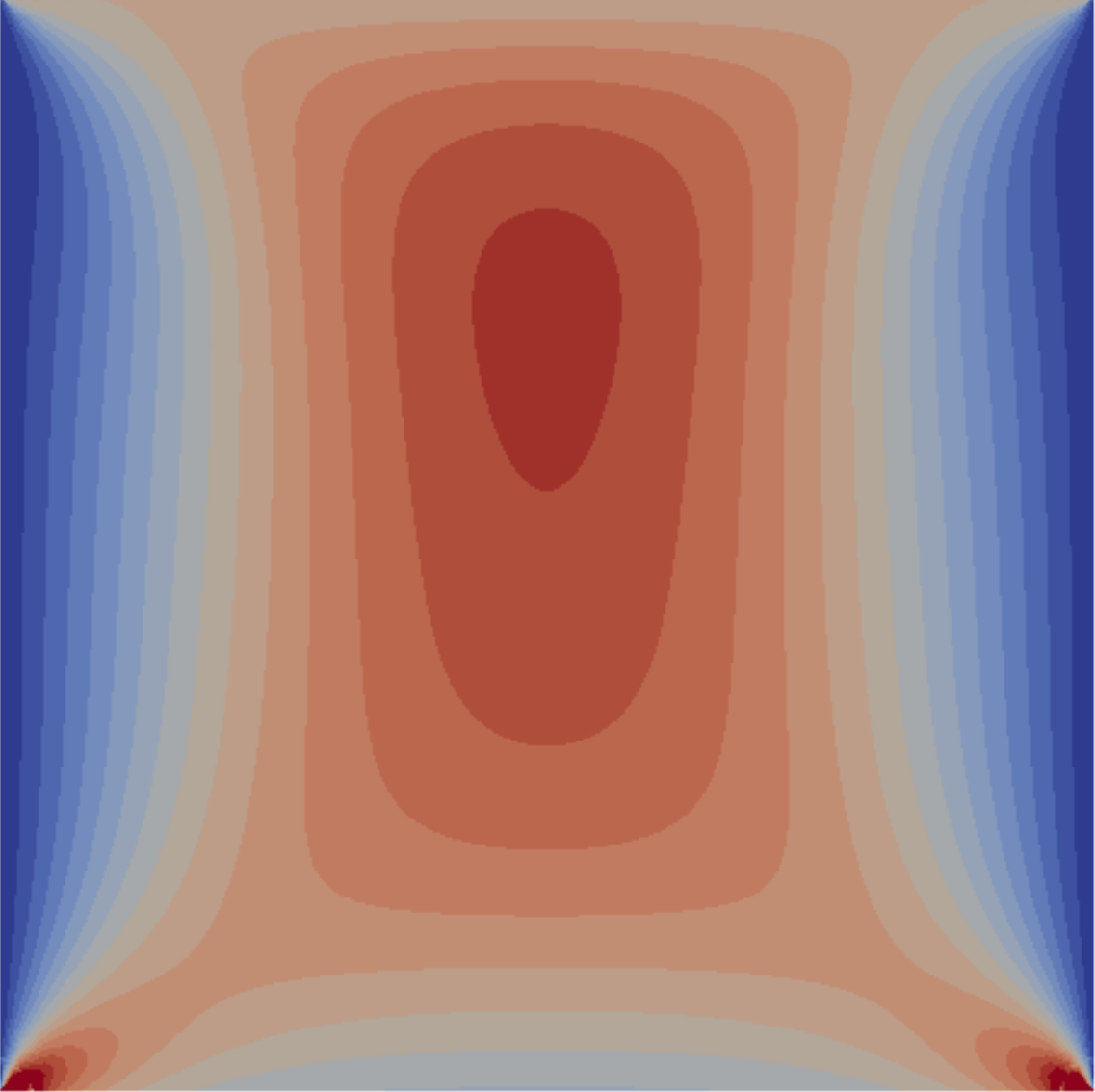}\\
    $\;0.8$ \includegraphics[width=2.2cm]{legend} $0$
    \subcaption{${\ms}_{1,2}$}
  \end{minipage}
  \begin{minipage}[b]{0.23\textwidth}\center
    \includegraphics[height=2.9cm]{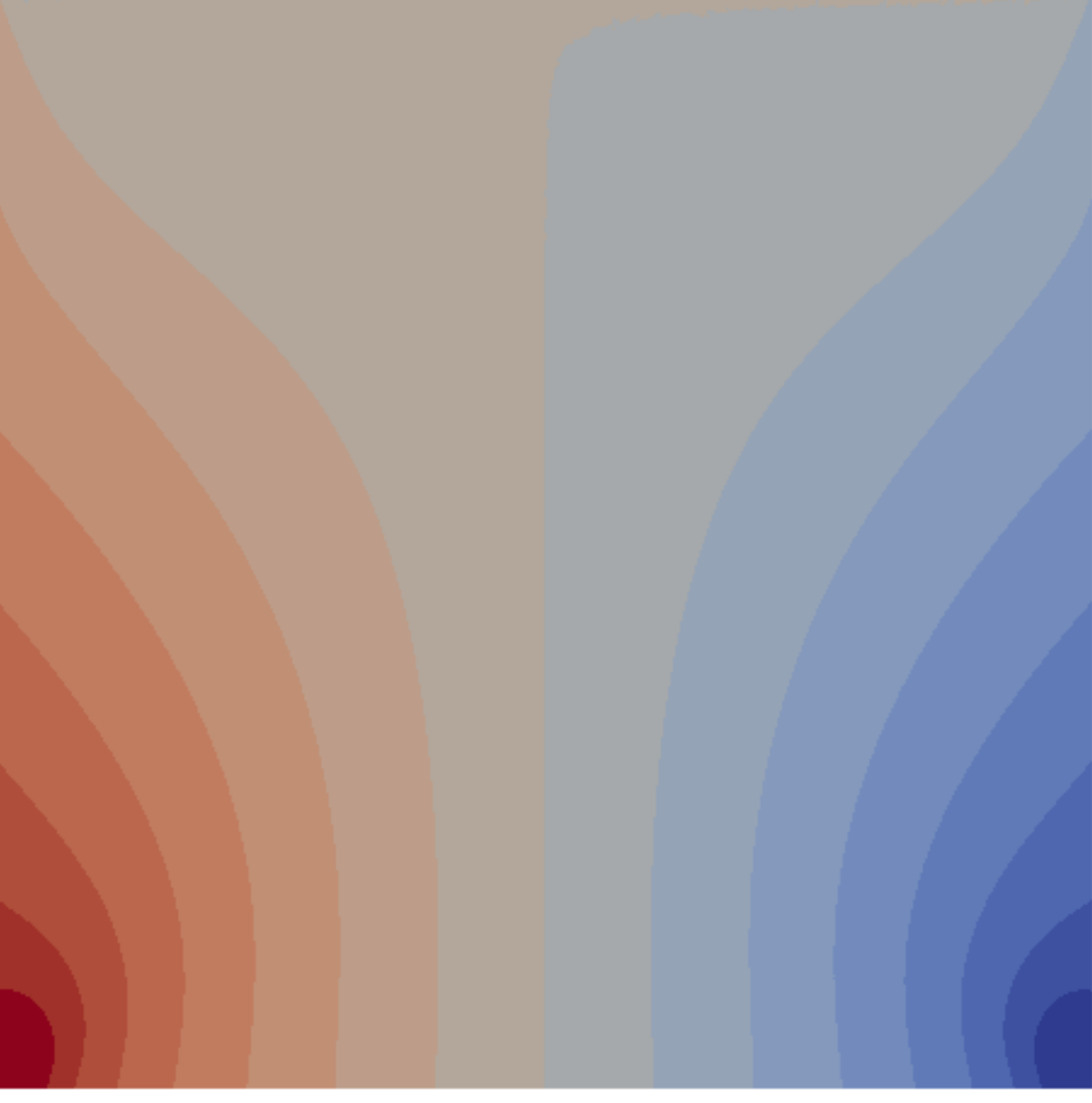}\\
    $\;\;3$ \includegraphics[width=2.2cm]{legend} $-3$
    \subcaption{${\ms}_{2,2}$}
  \end{minipage}
\caption{Shear test description and resulting stress components for the linear case. Values in $10^5 \rm{Pa}$}\label{Fig:shear}
\end{figure}
We next consider the two test cases schematically depicted in Figures~\ref{Fig:tensile} and~\ref{Fig:shear}.  
On the unit square domain $\Omega$, we solve problem~\eqref{eq:ne.strong} considering three different models of 
hyperelasticity (see Remark~\ref{rem:hyperelasticity}):
\begin{compactenum}[(i)]
  \item\emph{Linear.} The linear model corresponding to the stored energy density function~\eqref{eq:stored_energy_lin} with Lamé's 
  parameters
  \begin{equation}
    \label{eq:Lames}
    \lambda = 11 \times 10^5 \rm{Pa},\ 
    \mu = 82 \times 10^4 \rm{Pa}.
  \end{equation}
  \item\emph{Hencky--Mises.} The Hencky--Mises model~\eqref{eq:HenckyMises} obtained by taking 
  $\Phi(\rho)=\mu(\frac\rho2+(1+\rho)^{\nicefrac12})$ and $\alpha=\lambda+\mu$ in~\eqref{eq:stored_energy_HM}, 
  with $\lambda,\mu$ as in~\eqref{eq:Lames} (also in this case conditions~\eqref{eq:condition_phi} hold). 
  This choice leads to 
  \begin{equation}
    \label{eq:Hencky.param}
  \ms(\GRADs\vu)=((\lambda+\frac\mu2)-\frac\mu2(1+\opdev(\GRADs\vu))^{-\nicefrac12}))\optr (\GRADs\vu)\Id
  +\mu(1+(1+\opdev(\GRADs\vu))^{-\nicefrac12})\GRADs\vu.
  \end{equation} 
  The Lamé's functions of the previous relation are inspired from those proposed 
  in~\cite[Section 5.1]{Beirao-da-Veiga.Lovadina.ea:15}. In particular, the function
  $\tilde{\mu}(\rho)=\mu(1+(1+\opdev(\GRADs\vu))^{-\nicefrac12})$ 
  corresponds to the Carreau law for viscoplastic materials.
  \item\emph{Second-order.} The second-order model~\eqref{eq:SecondOrder} with Lamé's parameter as in~\eqref{eq:Lames} and 
  second-order moduli
  $$
    A = 11 \times 10^6 \rm{Pa},\
    B = -48 \times 10^5 \rm{Pa},\
    C = 13.2 \times 10^5 \rm{Pa}.
  $$
  These values correspond to the estimates provided in~\cite{Hughes.Kelly:53} for the Armco Iron. We recall that the 
  second-order elasticity stress-strain relation does not satisfy in general the assumptions under which we are able to 
  prove the convergence and error estimates. In particular, we observe that the stored energy density function defined 
  in~\eqref{eq:stored_energy_2o} is not convex.
\end{compactenum} 
The bottom part of the boundary of the domain is assumed to be fixed, the normal stress is equal to zero on the two 
lateral parts, and a traction is imposed at the top of the boundary. So, mixed boundary conditions are 
imposed as follows
\begin{subequations}
  \label{eq:tensileBC}
  \begin{alignat}{2}
     &\vu=\vec{0}\            &\text{on}\ \{\vec{x}\in\Gamma,x_2=0\},\\
     &\ms\normal=\vec{T}\     &\text{on}\ \{\vec{x}\in\Gamma,x_2=1\},\\
     &\ms\normal=\vec{0}\     &\text{on}\ \{\vec{x}\in\Gamma,x_1=0\},\\
     &\ms\normal=\vec{0}\     &\text{on}\ \{\vec{x}\in\Gamma,x_1=1\}.
  \end{alignat}
\end{subequations}
\begin{figure}\centering
  \begin{minipage}[b]{0.84\textwidth}
    \hspace{3mm}
    \begin{tikzpicture}\footnotesize
      \draw[step=0.5cm,gray,anchor=south west] (0,0) grid (11.75,4.25);
      \draw[step=0.125cm,gray,very thin,anchor=south west] (0,0) grid (11.75,4.25);
      \node[anchor=south west] at (0.0,0) {\includegraphics[width=3.5cm]{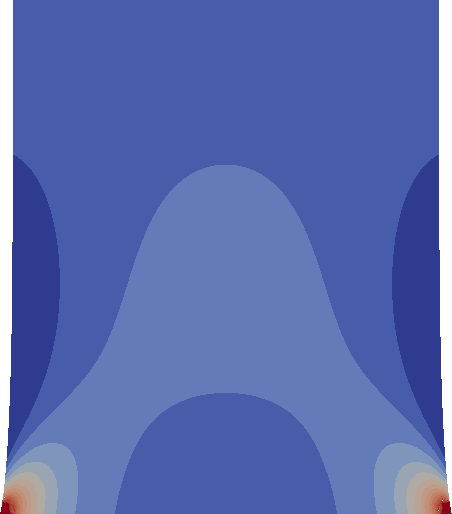}};
      \node[anchor=north, yshift=-0.25cm] at (2,0) {(a) Linear};
      \node[anchor=south west] at (4,0) {\includegraphics[width=3.5cm]{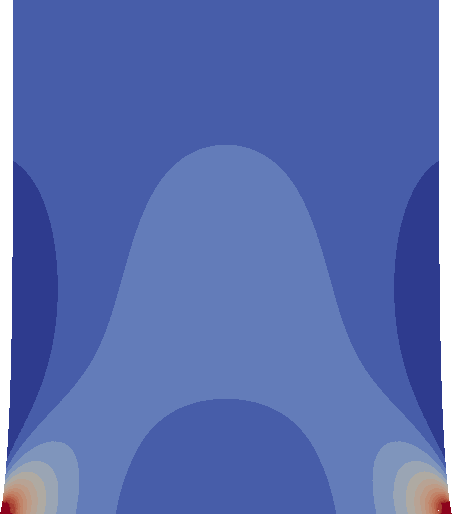}};
      \node[anchor=north, yshift=-0.25cm] at (6,0) {(b) Hencky--Mises};
      \node[anchor=south west] at (8,0) {\includegraphics[width=3.5cm]{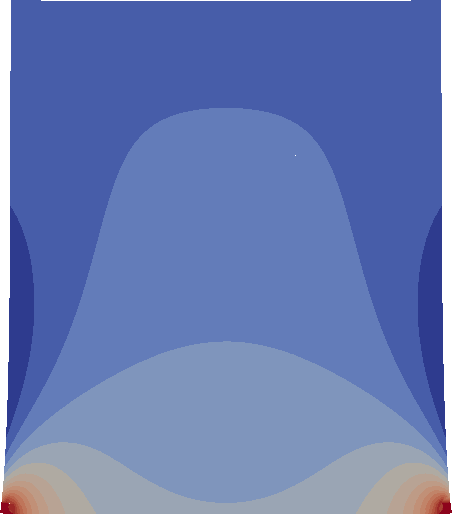}};
      \node[anchor=north, yshift=-0.25cm] at (10,0) {(c) Second order};
    \end{tikzpicture}
  \end{minipage}
  \begin{minipage}[b]{0.06\textwidth}\centering
    \includegraphics[height=4.1cm]{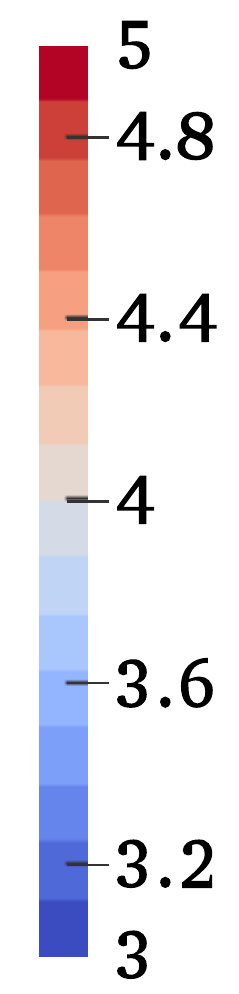}
    \vspace{3.5mm}
  \end{minipage}
    \caption{Tensile test case: Stress norm on the deformed domain. Values in $10^5 \rm{Pa}$}
  \label{Fig:tensileTC}
\end{figure}
For the tensile case, we impose a vertical traction at the top of the boundary equal to 
$\vec{T}=(0, 3.2 \times 10^5\rm{Pa})$.  
This type of boundary conditions produces large normal stresses (i.e., the diagonal components of $\ms$) and minor shear 
stresses (i.e., the off-diagonal components of $\ms$). It can be observed in Figure~\ref{Fig:tensile}, where the 
components of the stress tensor are depicted for the linear case. In Figure~\ref{Fig:tensileTC} we plot the stress norm on 
the deformed domain obtained for the three hyperelasticity models. 
The results of Fig.~\ref{Fig:tensile},~\ref{Fig:shear},~\ref{Fig:tensileTC}, and~\ref{Fig:shearTC} are obtained on a mesh with 3584 triangles (corresponding to a typical mesh-size of $3.84\times10^{-3}$) and with polynomial degree $k=2$. 
Obviously, the symmetry of the results is visible, and we observe that the three displacement fields are very close. 
This is motivated by the fact that, with our choice of the parameters in~\eqref{eq:Lames} and in~\eqref{eq:Hencky.param}, 
the linear model exactly corresponds to the linear approximation at the origin of the nonlinear ones. 
The maximum value of the stress concentrates on the two bottom corners due to the homogeneous Dirichlet condition that 
totally locks the displacement when $x_1=0$. The repartition of the stress on the domain with the second-order model is 
visibly different from those obtained with the linear and Hencky--Mises models. 
At the energy level, we also have a higher difference between the second-order model and the linear one
since $|\EE_{\rm lin}-\EE_{\rm hm}|/\EE_{\rm lin}=0.44\%$ while $|\EE_{\rm lin}-\EE_{\rm snd}|/\EE_{\rm lin}=4.45\%$,
where $\EE_{\bullet}$ is the total elastic energy obtained by integrating over the domain the strain energy density 
functions defined by~\eqref{eq:stored_energy_lin},~\eqref{eq:stored_energy_HM}, and~\eqref{eq:stored_energy_2o}: 
$$
  \EE_{\bullet} \eqbydef \int_{\Omega} \Psi_{\bullet},\quad\text{with}\quad\bullet\in\{\rm lin, hm, snd\}. 
$$
The reference values for the total energy, used in Figure~\ref{Fig:energy.conv} in order to assess convergence, are 
obtained on a fine Cartesian mesh having a mesh-size of $1.95\times10^{-3}$ and $k=3$.

\begin{figure}\centering
  \begin{minipage}[b]{0.89\textwidth}
    \hspace{5mm}
    \begin{tikzpicture}\footnotesize
      \draw[step=0.5cm,gray,anchor=south west] (0,0) grid (12.25,4);
      \draw[step=0.125cm,gray,very thin,anchor=south west] (0,0) grid (12.25,4);
      \node[anchor=south west] at (0.0,0) {\includegraphics[width=4.1cm]{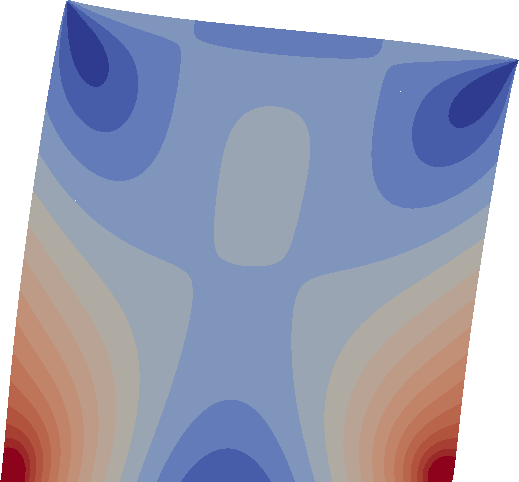}};
      \node[anchor=north, yshift=-0.25cm] at (2,0) {(a) Linear};
      \node[anchor=south west] at (4,0) {\includegraphics[width=4.1cm]{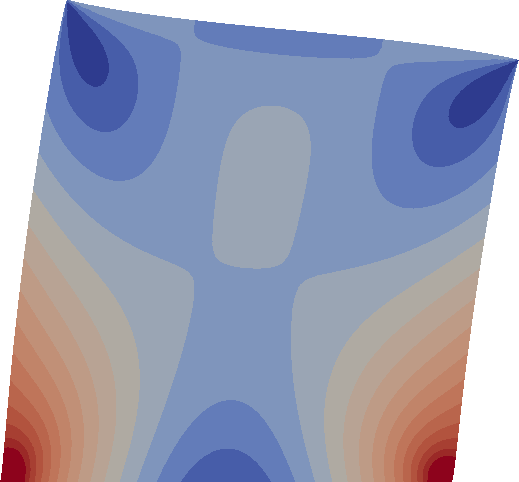}};
      \node[anchor=north, yshift=-0.25cm] at (6,0) {(b) Hencky--Mises};
      \node[anchor=south west] at (8,0) {\includegraphics[width=4.1cm]{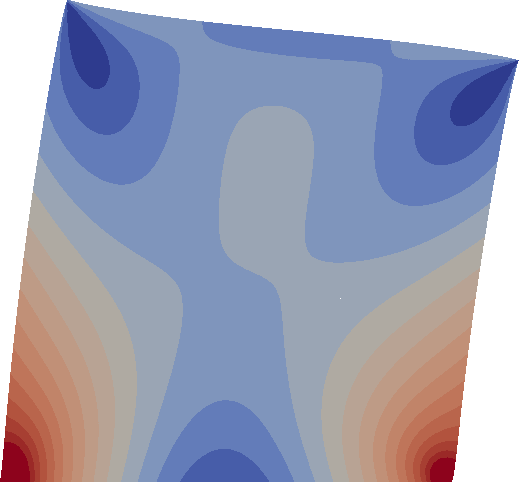}};
      \node[anchor=north, yshift=-0.25cm] at (10,0) {(c) Second order};
    \end{tikzpicture}
  \end{minipage}
  \begin{minipage}[b]{0.08\textwidth}\centering
    \includegraphics[height=4cm]{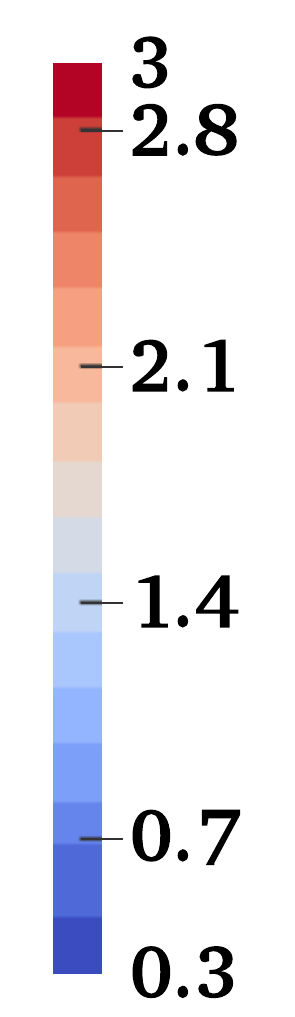}
    \vspace{3.5mm}
  \end{minipage}
  \caption{Shear test case: Stress norm on the deformed domain. Values in $10^4 \rm{Pa}$}
  \label{Fig:shearTC}
\end{figure}
For the shear case, we consider an horizontal traction equal to $\vec{T}=(4.5 \times 10^4 \rm{Pa},0)$
which induces the stress pattern illustrated in Figure~\ref{Fig:shear}.
The computed stress norm on the deformed domain is depicted in Figure~\ref{Fig:shearTC}, and we can see that
the displacement fields associated with the three models are very close as for the tensile test case.  
Here, the maximum values of the stress are localized in the lower part of the domain near the lateral parts.
Unlike the tensile test, the difference between the three models is tiny as confirmed by the elastic energy equal to 
$3180\;\rm{J}$, $3184\;\rm{J}$, and $3190\;\rm{J}$ respectively.
The decreasing of the energy difference in comparison with the previous test can be explained by the fact that the value of the Neumann boundary data on the top is divided by a factor 7 in order to obtain maximum displacements roughly equal to $15\%$.
\begin{figure}\footnotesize
  \begin{minipage}[b]{0.49\textwidth}\centering
    \begin{tikzpicture}[scale=0.9]
      \begin{axis}[
		  scaled ticks=false,
		  xticklabel style={
          /pgf/number format/precision=3,
          /pgf/number format/fixed,
          /pgf/number format/fixed zerofill,
          },
          ytick={21532,21538,21544,21550},
          yticklabel pos=right,
          legend style = {
            legend pos = north west
          }
        ]
        \addplot [color=blue, solid, mark=*, mark options={solid}, smooth] 
        table[x=meshsize,y=En_tot] {dat_energy/traction/nl0e_1_mesh1.dat}
        coordinate [pos=0.75] (A)
        coordinate [pos=1.00] (B);
        \addplot [color=blue, dashed, mark=star, mark options={solid}, smooth] 
        table[x=meshsize,y=En_tot] {dat_energy/traction/nl0e_1_voronoi_mesh.dat}
        coordinate [pos=0.75] (A)
        coordinate [pos=1.00] (B);
        \addplot [color=green!40!black, solid, mark=*, mark options={solid}, smooth]
        table[x=meshsize,y=En_tot] {dat_energy/traction/nl0e_2_mesh1.dat}
        coordinate [pos=0.75] (A)
        coordinate [pos=1.00] (B);
        \addplot [color=green!40!black, dashed, mark=star, mark options={solid}, smooth]
        table[x=meshsize,y=En_tot] {dat_energy/traction/nl0e_2_voronoi_mesh.dat}
        coordinate [pos=0.75] (A)
        coordinate [pos=1.00] (B);
        \addplot [color=red, solid, mark=*, mark options={solid}, smooth]
        table[x=meshsize,y=En_tot] {dat_energy/traction/nl0e_3_mesh1.dat}
        coordinate [pos=0.75] (A)
        coordinate [pos=1.00] (B);
        \addplot [color=red, dashed, mark=star, mark options={solid}, smooth]
        table[x=meshsize,y=En_tot] {dat_energy/traction/nl0e_3_voronoi_mesh.dat}
        coordinate [pos=0.75] (A)
        coordinate [pos=1.00] (B);
        \addplot [black,domain=3e-04:3.15e-02] {2.153189e+04}
        coordinate [pos=0.75] (A)
        coordinate [pos=1.00] (B);
        \legend{$k=1$ Triangular,$k=1$ Voronoi,$k=2$ Triangular,$k=2$ Voronoi,$k=3$ Triangular,$k=3$ Voronoi,
                $\EE_{\rm{lin}}=21532 \;\rm{J}$};
      \end{axis}
    \end{tikzpicture}
    \subcaption{Linear, tensile test}
  \end{minipage}
  \hfill
  \begin{minipage}[b]{0.49\textwidth}\centering
    \begin{tikzpicture}[scale=0.9]
      \begin{axis}[
		  scaled ticks=false,
		  xticklabel style={
          /pgf/number format/precision=3,
          /pgf/number format/fixed,
          /pgf/number format/fixed zerofill,
          },
          ytick={3180,3184,3188,3192},
          yticklabel pos=right,
          legend style = {
            legend pos = north west
          }
        ]
        \addplot [color=blue, solid, mark=*, mark options={solid}, smooth] 
        table[x=meshsize,y=En_tot] {dat_energy/shear/nl0e_1_mesh1.dat}
        coordinate [pos=0.75] (A)
        coordinate [pos=1.00] (B);
        \addplot [color=blue, dashed, mark=star, mark options={solid}, smooth] 
        table[x=meshsize,y=En_tot] {dat_energy/shear/nl0e_1_voronoi_mesh.dat}
        coordinate [pos=0.75] (A)
        coordinate [pos=1.00] (B);
        \addplot [color=green!40!black, solid, mark=*, mark options={solid}, smooth]
        table[x=meshsize,y=En_tot] {dat_energy/shear/nl0e_2_mesh1.dat}
        coordinate [pos=0.75] (A)
        coordinate [pos=1.00] (B);
        \addplot [color=green!40!black, dashed, mark=star, mark options={solid}, smooth]
        table[x=meshsize,y=En_tot] {dat_energy/shear/nl0e_2_voronoi_mesh.dat}
        coordinate [pos=0.75] (A)
        coordinate [pos=1.00] (B);
        \addplot [color=red, solid, mark=*, mark options={solid}, smooth]
        table[x=meshsize,y=En_tot] {dat_energy/shear/nl0e_3_mesh1.dat}
        coordinate [pos=0.75] (A)
        coordinate [pos=1.00] (B);
        \addplot [color=red, dashed, mark=star, mark options={solid}, smooth]
        table[x=meshsize,y=En_tot] {dat_energy/shear/nl0e_3_voronoi_mesh.dat}
        coordinate [pos=0.75] (A)
        coordinate [pos=1.00] (B);
        \addplot [black,domain=3e-04:3.15e-02] {3.179787e+03}
        coordinate [pos=0.75] (A)
        coordinate [pos=1.00] (B);
        \legend{$k=1$ Triangular, $k=1$ Voronoi, $k=2$ Triangular, $k=2$ Voronoi, $k=3$ Triangular, $k=3$ Voronoi,
                $\EE_{\rm{lin}}=3180 \;\rm{J}$};
      \end{axis}
    \end{tikzpicture}
    \subcaption{Linear, shear test}
  \end{minipage}
  \vspace{5mm}
  
  \begin{minipage}[b]{0.49\textwidth}\centering
    \begin{tikzpicture}[scale=0.9]
      \begin{axis}[
		  scaled ticks=false,
		  xticklabel style={
          /pgf/number format/precision=3,
          /pgf/number format/fixed,
          /pgf/number format/fixed zerofill,
          },
          ytick={21627,21633,21639,21645},
          yticklabel pos=right,
          legend style = {
            legend pos = north west
          }
        ]
        \addplot [color=blue, solid, mark=*, mark options={solid}, smooth]
        table[x=meshsize,y=En_tot] {dat_energy/traction/nl1e_1_mesh1.dat}
        coordinate [pos=0.75] (A)
        coordinate [pos=1.00] (B);
        \addplot [color=blue, dashed, mark=star, mark options={solid}, smooth]
        table[x=meshsize,y=En_tot] {dat_energy/traction/nl1e_1_voronoi_mesh.dat}
        coordinate [pos=0.75] (A)
        coordinate [pos=1.00] (B);
        \addplot [color=green!40!black, solid, mark=*, mark options={solid}, smooth]
        table[x=meshsize,y=En_tot] {dat_energy/traction/nl1e_2_mesh1.dat}
        coordinate [pos=0.75] (A)
        coordinate [pos=1.00] (B);
        \addplot [color=green!40!black, dashed, mark=star, mark options={solid}, smooth]
        table[x=meshsize,y=En_tot] {dat_energy/traction/nl1e_2_voronoi_mesh.dat}
        coordinate [pos=0.75] (A)
        coordinate [pos=1.00] (B);
        \addplot [color=red, solid, mark=*, mark options={solid}, smooth]
        table[x=meshsize,y=En_tot] {dat_energy/traction/nl1e_3_mesh1.dat}
        coordinate [pos=0.75] (A)
        coordinate [pos=1.00] (B);
        \addplot [color=red, dashed, mark=star, mark options={solid}, smooth]
        table[x=meshsize,y=En_tot] {dat_energy/traction/nl1e_3_voronoi_mesh.dat}
        coordinate [pos=0.75] (A)
        coordinate [pos=1.00] (B);
        \addplot [black,domain=3e-04:3.15e-02] {2.162725e+04}
        coordinate [pos=0.75] (A)
        coordinate [pos=1.00] (B);
        \legend{$k=1$ Triangular, $k=1$ Voronoi, $k=2$ Triangular, $k=2$ Voronoi, $k=3$ Triangular, $k=3$ Voronoi,
                $\EE_{\rm{hm}}=21627 \;\rm{J}$};
      \end{axis}
    \end{tikzpicture}
    \subcaption{Hencky--Mises, tensile test}
  \end{minipage}
  \hfill
  \begin{minipage}[b]{0.49\textwidth}\centering
    \begin{tikzpicture}[scale=0.9]
      \begin{axis}[
		  scaled ticks=false,
		  xticklabel style={
          /pgf/number format/precision=3,
          /pgf/number format/fixed,
          /pgf/number format/fixed zerofill,
          },
          ytick={3184,3188,3192,3196},
          yticklabel pos=right,
          legend style = {
            legend pos = north west
          }
        ]
        \addplot [color=blue, solid, mark=*, mark options={solid}, smooth]
        table[x=meshsize,y=En_tot] {dat_energy/shear/nl1e_1_mesh1.dat}
        coordinate [pos=0.75] (A)
        coordinate [pos=1.00] (B);
        \addplot [color=blue, dashed, mark=star, mark options={solid}, smooth]
        table[x=meshsize,y=En_tot] {dat_energy/shear/nl1e_1_voronoi_mesh.dat}
        coordinate [pos=0.75] (A)
        coordinate [pos=1.00] (B);
        \addplot [color=green!40!black, solid, mark=*, mark options={solid}, smooth]
        table[x=meshsize,y=En_tot] {dat_energy/shear/nl1e_2_mesh1.dat}
        coordinate [pos=0.75] (A)
        coordinate [pos=1.00] (B);
        \addplot [color=green!40!black, dashed, mark=star, mark options={solid}, smooth]
        table[x=meshsize,y=En_tot] {dat_energy/shear/nl1e_2_voronoi_mesh.dat}
        coordinate [pos=0.75] (A)
        coordinate [pos=1.00] (B);
        \addplot [color=red, solid, mark=*, mark options={solid}, smooth]
        table[x=meshsize,y=En_tot] {dat_energy/shear/nl1e_3_mesh1.dat}
        coordinate [pos=0.75] (A)
        coordinate [pos=1.00] (B);
        \addplot [color=red, dashed, mark=star, mark options={solid}, smooth]
        table[x=meshsize,y=En_tot] {dat_energy/shear/nl1e_3_voronoi_mesh.dat}
        coordinate [pos=0.75] (A)
        coordinate [pos=1.00] (B);
        \addplot [black,domain=3e-04:3.15e-02] {3.184144e+03}
        coordinate [pos=0.75] (A)
        coordinate [pos=1.00] (B);
        \legend{$k=1$ Triangular, $k=1$ Voronoi, $k=2$ Triangular, $k=2$ Voronoi, $k=3$ Triangular, $k=3$ Voronoi,
                $\EE_{\rm{hm}}=3184 \;\rm{J}$};
      \end{axis}
    \end{tikzpicture}
    \subcaption{Hencky--Mises, shear test}
  \end{minipage}
  \vspace{5mm}
  
  \begin{minipage}[b]{0.49\textwidth}\centering
    \begin{tikzpicture}[scale=0.9]
      \begin{axis}[
		  scaled ticks=false,
		  xticklabel style={
          /pgf/number format/precision=3,
          /pgf/number format/fixed,
          /pgf/number format/fixed zerofill,
          },
          yticklabel pos=right,
          legend style = {
            legend pos = north west
          }
        ]
        \addplot [color=blue, solid, mark=*, mark options={solid}, smooth]
        table[x=meshsize,y=En_tot] {dat_energy/traction/nl3e_1_mesh1.dat}
        coordinate [pos=0.75] (A)
        coordinate [pos=1.00] (B);
        \addplot [color=blue, dashed, mark=star, mark options={solid}, smooth]
        table[x=meshsize,y=En_tot] {dat_energy/traction/nl3e_1_voronoi_mesh.dat}
        coordinate [pos=0.75] (A)
        coordinate [pos=1.00] (B);
        \addplot [color=green!40!black, solid, mark=*, mark options={solid}, smooth]
        table[x=meshsize,y=En_tot] {dat_energy/traction/nl3e_2_mesh1.dat}
        coordinate [pos=0.75] (A)
        coordinate [pos=1.00] (B);
        \addplot [color=green!40!black, dashed, mark=star, mark options={solid}, smooth]
        table[x=meshsize,y=En_tot] {dat_energy/traction/nl3e_2_voronoi_mesh.dat}
        coordinate [pos=0.75] (A)
        coordinate [pos=1.00] (B);
        \addplot [color=red, solid, mark=*, mark options={solid}, smooth]
        table[x=meshsize,y=En_tot] {dat_energy/traction/nl3e_3_mesh1.dat}
        coordinate [pos=0.75] (A)
        coordinate [pos=1.00] (B);
        \addplot [color=red, dashed, mark=star, mark options={solid}, smooth]
        table[x=meshsize,y=En_tot] {dat_energy/traction/nl3e_3_voronoi_mesh.dat}
        coordinate [pos=0.75] (A)
        coordinate [pos=1.00] (B);
        \addplot [black,domain=3e-04:3.15e-02] {2.249023e+04}
        coordinate [pos=0.75] (A)
        coordinate [pos=1.00] (B);
        \legend{$k=1$ Triangular, $k=1$ Voronoi, $k=2$ Triangular, $k=2$ Voronoi, $k=3$ Triangular, $k=3$ Voronoi,
                $\EE_{\rm{snd}}=22490 \;\rm{J}$};
      \end{axis}
    \end{tikzpicture}
    \subcaption{Second-order, tensile test}
  \end{minipage}
  \hfill
  \begin{minipage}[b]{0.49\textwidth}\centering
    \begin{tikzpicture}[scale=0.9]
      \begin{axis}[
		  scaled ticks=false,
		  xticklabel style={
          /pgf/number format/precision=3,
          /pgf/number format/fixed,
          /pgf/number format/fixed zerofill,
          },
          ytick={3190,3194,3198,3202},
          yticklabel pos=right,
          legend style = {
            legend pos = north west
          }
        ]
        \addplot [color=blue, solid, mark=*, mark options={solid}, smooth]
        table[x=meshsize,y=En_tot] {dat_energy/shear/nl3e_1_mesh1.dat}
        coordinate [pos=0.75] (A)
        coordinate [pos=1.00] (B);
        \addplot [color=blue, dashed, mark=star, mark options={solid}, smooth]
        table[x=meshsize,y=En_tot] {dat_energy/shear/nl3e_1_voronoi_mesh.dat}
        coordinate [pos=0.75] (A)
        coordinate [pos=1.00] (B);
        \addplot [color=green!40!black, solid, mark=*, mark options={solid}, smooth]
        table[x=meshsize,y=En_tot] {dat_energy/shear/nl3e_2_mesh1.dat}
        coordinate [pos=0.75] (A)
        coordinate [pos=1.00] (B);
        \addplot [color=green!40!black, dashed, mark=star, mark options={solid}, smooth]
        table[x=meshsize,y=En_tot] {dat_energy/shear/nl3e_2_voronoi_mesh.dat}
        coordinate [pos=0.75] (A)
        coordinate [pos=1.00] (B);
        \addplot [color=red, solid, mark=*, mark options={solid}, smooth]
        table[x=meshsize,y=En_tot] {dat_energy/shear/nl3e_3_mesh1.dat}
        coordinate [pos=0.75] (A)
        coordinate [pos=1.00] (B);
        \addplot [color=red, dashed, mark=star, mark options={solid}, smooth]
        table[x=meshsize,y=En_tot] {dat_energy/shear/nl3e_3_voronoi_mesh.dat}
        coordinate [pos=0.75] (A)
        coordinate [pos=1.00] (B);
        \addplot [black,domain=3e-04:3.15e-02] {3.190402e+03}
        coordinate [pos=0.75] (A)
        coordinate [pos=1.00] (B);
        \legend{$k=1$ Triangular, $k=1$ Voronoi, $k=2$ Triangular, $k=2$ Voronoi, $k=3$ Triangular, $k=3$ Voronoi,
                $\EE_{\rm{snd}}=3190 \;\rm{J}$};
      \end{axis}
    \end{tikzpicture}
    \subcaption{Second-order, shear test}
  \end{minipage}
  \caption{Energy vs $h$, tensile and shear test cases}
  \label{Fig:energy.conv}
\end{figure}
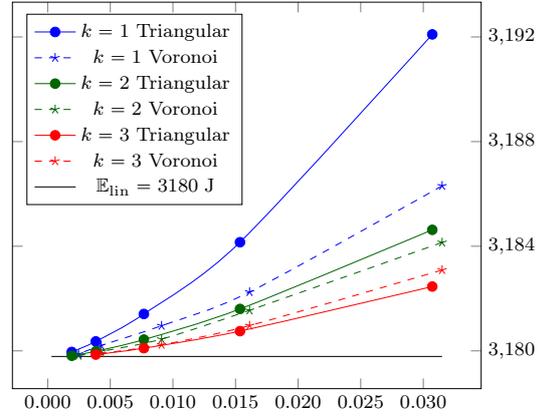
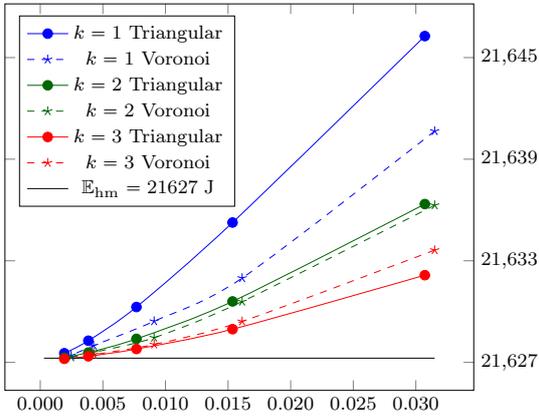
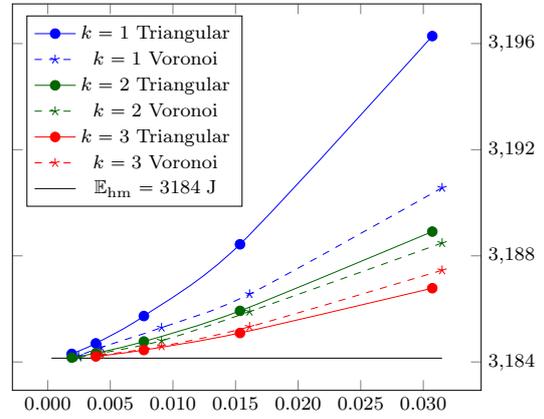
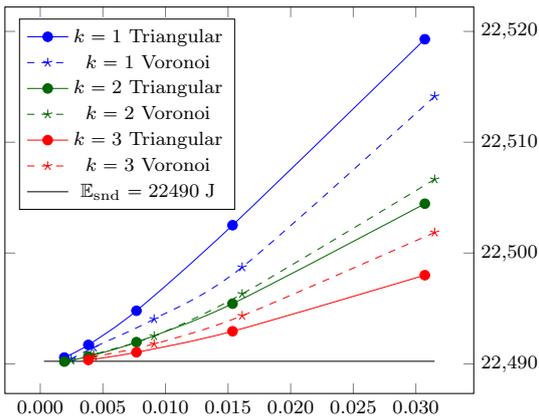
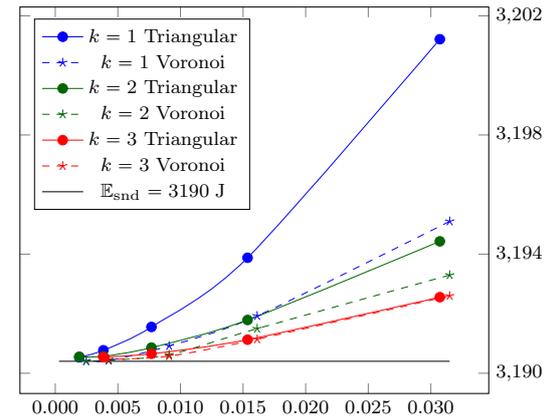  
%
%
\section{Analysis}\label{sec:analysis}

We collect here the proofs of the results stated in Section~\ref{sec:main.res}.
To alleviate the notation, from this point on we abridge into $a\lesssim b$ the inequality $a\le Cb$ with real number $C>0$ independent of $h$.

\subsection{Existence and uniqueness}\label{sec:analysis:exist.uniq}

\begin{proof}[Proof of Theorem~\ref{theo:existence}]
  \begin{asparaenum}[1)]
  \item \emph{Existence.}
    We follow the argument of~\cite[Theorem 3.3]{Deimling:85}. 
    If $(E,(\cdot,\cdot)_E,\norm[E]{{\cdot}})$ is a Euclidean space and $\Phi:E\to E$ is a continuous map such that 
    $\frac{(\Phi(x),x)_E}{\norm[E]{x}}\to+\infty$, as $\norm[E]{x}\to+\infty$, then $\Phi$ is surjective. 
    We take $E=\UhD$, endowed with the inner product 
    $$
    (\uvv[h],\uvw[h])_{\epsilon,h}\eqbydef
    \sum_{T\in\Th}\left( \int_T\GRADs\vv[T]:\GRADs\vw[T]
    +\sum_{F\in\Fh[T]}\frac{1}{h_F}\int_F(\vv[F]-\vv[T])\cdot(\vw[F]-\vw[T])\right),
    $$
    and we define $\Phi:\UhD\to\UhD$ such that, for all $\uvv[h]\in\UhD$, 
    $(\Phi(\uvv[h]),\uvw[h])_{\epsilon,h}=a_h(\uvv[h],\uvw[h])$ for all $\uvw[h]\in\UhD$. The coercivity~\eqref{eq:hypo.coercivity} of $\ms$ 
    together with the norm equivalence~\eqref{eq:norm.equivalence} yields
    $(\Phi(\uvv[h]),\uvv[h])_{\epsilon,h}\ge\min\{1,\lms\}\eta^{-1}\norm[\epsilon,h]{\uvv[h]}^2$ for all $\uvv[h]\in\UhD$, so that $\Phi$ is surjective.
    Let now $\uvy[h]\in\UhD$ be such that $(\uvy[h],\uvw[h])_{\epsilon,h}=\int_\Omega\vf\cdot\vw[h]$ for all $\uvw[h]\in\UhD$.
    By the surjectivity of $\Phi$, there exists $\uvu[h]\in\UhD$ such that $\Phi(\uvu[h])=\uvy[h]$. By definition of $\Phi$ and $\uvy[h]$, $\uvu[h]$ is 
    a solution to the problem~\eqref{eq:discrete.pb}.
    
  \item \emph{Uniqueness.}
    Let $\uvu[h,1],\uvu[h,2]\in\UhD$ solve~\eqref{eq:discrete.pb}.
    We assume $\uvu[h,1]\neq\uvu[h,2]$ and proceed by contradiction.
    Subtracting~\eqref{eq:discrete.pb} for $\uvu[h,2]$ from~\eqref{eq:discrete.pb} for $\uvu[h,1]$, it is inferred that 
    $a_h(\uvu[h,1],\uvv[h])-a(\uvu[h,2],\uvv[h])=0$ for all $\uvv\in\UhD$.
    Hence in particular, taking $\uvv[h]=\uvu[h,1]-\uvu[h,2]$ we obtain that
    $$
    a_h(\uvu[h,1],\uvu[h,1]-\uvu[h,2])-a_h(\uvu[h,2],\uvu[h,1]-\uvu[h,2])=0
    $$
    On the other hand, owing to the strict monotonicity of $\ms$ and to the fact that the bilinear form $s_h$ is positive semidefinite, we have that
    $$
    \begin{aligned}
      &a_h(\uvu[h,1],\uvu[h,1]-\uvu[h,2])-a_h(\uvu[h,2],\uvu[h,1]-\uvu[h,2])
      \\
      &\quad=\int_{\Omega}\big(\ms(\cdot,\Ghs\uvu[h,1]) - \ms(\cdot,\Ghs\uvu[h,2])\big):\Ghs(\uvu[h,1]-\uvu[h,2])
+ s_h(\uvu[h,1]-\uvu[h,2],\uvu[h,1]-\uvu[h,2]) >0.
    \end{aligned}
    $$
    Hence, $\uvu[h,1]=\uvu[h,2]$ and the conclusion follows.\qedhere
  \end{asparaenum}
\end{proof}

\subsection{Convergence}\label{sec:analysis:convergence}

This section contains the proof of Theorem~\ref{theo:convergence} preceeded by a discrete Rellich--Kondrachov Lemma
(cf.~\cite[Theorem 9.16]{Brezis:10}) and a proposition showing the approximation properties of the discrete symmetric gradient $\Ghs$.
\begin{lemma}[Discrete compactness]
  \label{lemma:compactness}
  Let the assumptions of Theorem~\ref{theo:convergence} hold.
  Let $(\uvv[h])_{h\in{\cal H}}\in(\UhD)_{h\in{\cal H}}$, and assume that there is a real number $C\ge 0$ such that
  \begin{equation}\label{eq:compactness.unif.bnd}
    \norm[\epsilon,h]{\uvv[h]}\le C\qquad\forall h\in{\cal H}.
  \end{equation}
  Then, for all $q$ such that $1\le q<+\infty$ if $d=2$ or $1\le q<6$ if $d=3$, the sequence 
  $(\vv[h])_{h\in{\cal H}}\in(\Poly{k}(\Th;\Real^d))_{h\in{\cal H}}$ is relatively compact in $L^q(\Omega; \Real^d)$.
  As a consequence, there is a function $\vv\in L^q(\Omega;\Real^d)$ 
such that as $h\to 0$, up to a subsequence, $\vv[h]\to\vv$ strongly in $L^q(\Omega;\Real^d)$.
\end{lemma}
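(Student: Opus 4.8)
The plan is to derive the stated relative compactness from the classical Kolmogorov--Riesz--Fréchet criterion (in the form~\cite[Theorem~9.16]{Brezis:10}) applied to the element-based functions, combined with a discrete Sobolev embedding used to upgrade the integrability exponent. I extend each $\vv[h]$ by $\vec0$ outside $\Omega$, keeping the same symbol $\vv[h]$ for the extension; this is the natural choice here because the homogeneous boundary condition built into $\UhD$ (namely $\vv[F]=\vec0$ on boundary faces) controls the trace of $\vv[h]$ on $\Gamma$ through the seminorm~\eqref{eq:norm.epsh}, so that no spurious mass is created across the boundary and the decay-at-infinity hypothesis of the Kolmogorov criterion is automatic.

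First I would record the two a priori bounds carried by the seminorm. Since the hypotheses of Theorem~\ref{theo:convergence} include the discrete Korn inequality~\eqref{eq:Korn}, the uniform bound~\eqref{eq:compactness.unif.bnd} immediately controls $\norm{\vv[h]}+\norm{\GRADh\vv[h]}$, i.e.\ $\vv[h]$ is bounded in the broken $H^1$-norm. A discrete Sobolev embedding for broken polynomial functions on regular mesh sequences (in the spirit of~\cite{Di-Pietro.Droniou:15,Di-Pietro.Droniou:16}, where the jump contributions in~\eqref{eq:norm.epsh} enter on an equal footing with the broken gradient) then yields, for an exponent $q^\star$ with $q^\star=6$ if $d=3$ and $q^\star$ any finite number if $d=2$, a uniform bound $\norm[L^{q^\star}(\Omega;\Real^d)]{\vv[h]}\lesssim\norm[\epsilon,h]{\uvv[h]}\le C$.

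Second, and this is the crux, I would establish the translation estimate underlying the Kolmogorov criterion: for all $\vec\xi\in\Real^d$,
\begin{equation*}
  \norm[L^2(\Real^d;\Real^d)]{\vv[h](\SCAL+\vec\xi)-\vv[h]}
  \lesssim |\vec\xi|^{\nicefrac12}\,(|\vec\xi|+h)^{\nicefrac12}\,\norm[\epsilon,h]{\uvv[h]}.
\end{equation*}
The mechanism is to integrate $\vv[h](\SCAL+\vec\xi)-\vv[h]$ along the segment joining $\vec x$ to $\vec x+\vec\xi$: the motion inside each element contributes at most $|\vec\xi|\,\norm{\GRADh\vv[h]}$, controlled through~\eqref{eq:Korn} by $|\vec\xi|\,\norm[\epsilon,h]{\uvv[h]}$, whereas each crossing of an interface contributes the jump of $\vv[h]$ across that face. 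The delicate point is the bookkeeping of the accumulated jumps: writing a jump across an interface shared by $T_1,T_2$ as the sum of the two face--element differences $\vv[F]-\vv[T_1]$ and $\vv[F]-\vv[T_2]$ — precisely the quantities penalised in~\eqref{eq:norm.epsh} — and summing over the faces met by the family of segments with a Cauchy--Schwarz argument in the measure, one bounds the jump part by $(h|\vec\xi|)^{\nicefrac12}\,\norm[\epsilon,h]{\uvv[h]}$. Because the meshsizes are bounded, the right-hand side of the displayed estimate tends to $0$ as $|\vec\xi|\to0$ uniformly in $h\in{\cal H}$, which is exactly the equicontinuity of translates required by the Kolmogorov criterion. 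This face-jump accounting is the main obstacle of the proof.

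Finally I would assemble the pieces. The translation estimate, the uniform $L^2$ bound, and the boundary behaviour of the zero-extension furnish relative compactness of $(\vv[h])_{h\in{\cal H}}$ in $L^2(\Omega;\Real^d)$. Extracting an $L^2$-convergent subsequence with limit $\vec v$ and invoking the uniform $L^{q^\star}$ bound, the interpolation inequality $\norm[L^q]{\SCAL}\le\norm[L^2]{\SCAL}^{\theta}\norm[L^{q^\star}]{\SCAL}^{1-\theta}$ shows that the subsequence is Cauchy, hence convergent, in $L^q(\Omega;\Real^d)$ for every $q$ with $2\le q<q^\star$; the range $1\le q<2$ follows directly from the boundedness of $\Omega$. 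Since $q^\star$ can be taken equal to $6$ for $d=3$ and arbitrarily large for $d=2$, this covers the whole range stated in the lemma, and the same argument applied to an arbitrary sequence yields both the relative compactness and the existence of a convergent subsequence with limit $\vec v\in L^q(\Omega;\Real^d)$.
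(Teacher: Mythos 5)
Your route is genuinely different from the paper's. The paper never proves a translation estimate: it extends $\vv[h]$ by zero to $\Real^d$, bounds the bounded-variation norm $\norm[{\rm BV}]{\vv[h]}$ by duality---testing against $\vec\phi\in C_c^\infty(\Real^d;\Real^d)$ with $\norm[L^\infty(\Real^d;\Real^d)]{\vec\phi}\le 1$, integrating by parts element by element, and absorbing the volume and face terms into $\norm[\epsilon,h]{\uvv[h]}$ via Cauchy--Schwarz and the discrete Korn inequality~\eqref{eq:Korn}---and then invokes the Helly selection principle to get relative compactness in $L^1(\Omega;\Real^d)$. You replace the pair (BV bound, Helly) by the pair ($L^2$ translation estimate, Kolmogorov--Riesz--Fr\'echet). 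The endgames are identical: zero extension with boundary faces treated like interfaces (legitimate, since $\vv[F]=\vec 0$ on $\Fhb$ makes $h_F^{-1}\norm[F]{\vv[T]}^2$ part of the seminorm), Korn to control the full broken gradient, the discrete Sobolev embedding for a uniform $L^r$ bound, and interpolation to cover the whole range of $q$ (the paper interpolates between $L^1$ and $L^r$, you between $L^2$ and $L^{q^\star}$ plus H\"older for $q<2$; both work). One small slip: in Brezis the Kolmogorov--Riesz--Fr\'echet criterion is Theorem~4.26/Corollary~4.27, whereas Theorem~9.16 is Rellich--Kondrachov, which the paper cites only as the continuous statement being mimicked.

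What keeps your proposal from being a complete proof is precisely the step you flag as the crux. The translation estimate you state is correct in form (it is classical for piecewise constants on admissible finite-volume meshes), but the ``Cauchy--Schwarz argument in the measure'' conceals the real obstruction on the meshes admitted here. Two ingredients are needed: (i) a Fubini argument converting jump values at crossing points into $L^2$ face norms---this part is harmless, since for fixed $F$ the change of variables $\vec x\mapsto(\vec y,t)$ with $\vec x=\vec y-t\vec\xi$, $\vec y\in F$, has Jacobian $|\vec\xi\cdot\normal_{TF}|\le|\vec\xi|$; and (ii) a geometric crossing lemma of the type $\sup_{\vec x}\sum_{F\ \text{crossed by}\ [\vec x,\vec x+\vec\xi]}h_F\lesssim|\vec\xi|+h$, which is what makes the weighted Cauchy--Schwarz produce the factor $|\vec\xi|(|\vec\xi|+h)$ rather than something that degenerates. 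This lemma is not automatic: a segment can clip the corners of many elements whose diameters far exceed the corresponding chord lengths, so the naive ``chords cover the segment'' count fails; one must exploit the fact that the regular mesh sequences of Section~\ref{sec:mesh} (matching shape-regular simplicial submesh $\fTh$ with $\varrho h_T\le h_S$) are locally quasi-uniform, so that the elements met near any point of the segment have comparable diameters and bounded cardinality. That is provable, but it is the actual mathematical content of your crux step, and it is exactly the mesh-geometric bookkeeping that the paper's duality-based BV bound avoids altogether (no segments, no crossings---only integration by parts and the elementary bound $\sum_{i=1}^d|(\normal_{TF})_i|\le d^{\nicefrac12}$). Either supply this crossing lemma, or observe that once the paper's BV bound is available the $L^1$ translation estimate $\norm[L^1(\Real^d;\Real^d)]{\vv[h](\cdot+\vec\xi)-\vv[h]}\le|\vec\xi|\,\norm[{\rm BV}]{\vv[h]}$ is immediate and your Kolmogorov argument runs in $L^1$---at which point the two proofs essentially merge.
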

\begin{proof}
  In the proof we use the same notation for functions in $L^2(\Omega;\Real^d)\subset L^1(\Omega;\Real^d)$ and for their 
  extension by zero outside $\Omega$. Let $(\uvv[h])_{h\in{\cal H}}\in(\UhD)_{h\in{\cal H}}$ be such 
  that~\eqref{eq:compactness.unif.bnd} holds.  
  Define the space of integrable functions with bounded variation 
  \mbox{${\rm BV}(\Real^d)\eqbydef \{\vv\in L^1(\Real^d;\Real^d)\;|\;\norm[{\rm BV}]{\vv}<+\infty\}$}, where
  $$
  \norm[{\rm BV}]{\vv}\eqbydef 
  \sum_{i=1}^d \sup\left\{ \int_{\Real^d} \vv\cdot\partial_i\vec\phi
  \;\;|\;\; \vec\phi\in C_c^\infty(\Real^d;\Real^d), \norm[L^\infty(\Real^d;\Real^d)]{\vec\phi}\le 1 \right\}.
  $$
  Here, $\partial_i\vec\phi$ denotes the $i$-th column of $\GRAD\vec\phi$. Let $\vec\phi\in C_c^\infty(\Real^d;\Real^d)$ 
  with $\norm[L^\infty(\Real^d;\Real^d)]{\vec\phi}\le 1 $. Integrating by parts and using the fact that  
  $\sum_{T\in\Th}\sum_{F\in\Fh[T]}\int_F (\vv[F]\cdot\vec\phi)\normal_{TF}=0$, we have that
  $$
  \begin{aligned}
    \int_{\Real^d} \vv[h]\cdot\vec\partial_i\vec\phi &= \sum_{T\in\Th}\int_T ((\GRAD\vec\phi)^{\trans}\vv[T])_i 
    = -\sum_{T\in\Th}\left(\int_T ((\GRAD\vv[T])^{\trans}\vec\phi)_i 
    + \sum_{F\in\Fh[T]}\int_F (\vv[F]\cdot\vec\phi-\vv[T]\cdot\vec\phi) (\normal_{TF})_i \right) 
    \\
    &\le \sum_{T\in\Th}\left(\int_T \sum_{j=1}^d|(\GRAD\vv[T])_{ji}| 
    + \sum_{F\in\Fh[T]}\int_F \sum_{j=1}^d|(\vv[F]-\vv[T])_j(\normal_{TF})_i| \right),
  \end{aligned}
  $$
  where, in order to pass to the second line, we have used $\norm[L^\infty(\Real^d;\Real^d)]{\vec\phi}\le 1$. Therefore, summing
  over $i\in\{ 1,...,d\}$, observing that, for all $T\in\Th$ and all $F\in\Fh[T]$, we have 
  $\sum_{i=1}^d|(\normal_{TF})_i|\le d^{\nicefrac12}$, and using the Lebesgue embeddings arising from the H\"older 
  inequality on bounded domain, leads to
  $$
  \norm[{\rm BV}]{\vv[h]}\lesssim \sum_{T\in\Th}\left(|T|_d^{\nicefrac12}\norm[T]{\GRAD\vv[T]} 
  +\sum_{F\in\Fh[T]}|F|_{d-1}^{\nicefrac12}\norm[F]{\vv[F]-\vv[T]}\right),
  $$
  where $|{\cdot}|_{d}$ denotes the $d$-dimensional Hausdorff measure.
  Moreover, using the Cauchy--Schwarz inequality together with the geometric bound $|F|_{d-1}h_F\lesssim |T|_d$, we obtain that
  $$
    \norm[{\rm BV}]{\vv[h]}\lesssim |\Omega|_d^{\nicefrac12}
    \left(\sum_{T\in\Th}\left[
      \norm[T]{\GRAD\vv[T]}^2 + \sum_{F\in\Fh[T]}h_F^{-1}\norm[F]{\vv[F]-\vv[T]}^2
      \right]\right)^{\nicefrac12}.
  $$
  Thus, using the discrete Korn inequality~\eqref{eq:Korn}, it is readily inferred that
  \begin{equation}
    \label{eq:bound1BV}
    \norm[{\rm BV}]{\vv[h]}\lesssim\norm[\epsilon,h]{\uvv[h]}\lesssim 1.
  \end{equation}
  Owing to the Helly selection principle~\cite[Section 5.2.3]{Evans:92}, the sequence 
  $(\vv[h])_{h\in{\cal H}}$ is relatively compact in $L^1(\Real^d;\Real^d)$ and thus in $L^1(\Omega;\Real^d)$.
  It only remains to prove that the sequence is also relatively compact in $L^q(\Omega;\Real^d)$, with $1<q<+\infty$ if 
  $d=2$ or $1<q<6$ if $d=3$. Owing to the discrete Sobolev embeddings~\cite[Proposition 5.4]{Di-Pietro.Drouniou:15} 
  together with the discrete Korn inequality~\eqref{eq:Korn}, it holds, with $r=q+1$ if $d=2$ and $r=6$ if $d=3$, that 
  $$
  \norm[L^r(\Omega;\Real^d)]{\vv[h]}\lesssim
  \left(\sum_{T\in\Th}\left[
    \norm[T]{\GRAD\vv[T]}^2 + \sum_{F\in\Fh[T]}h_F^{-1}\norm[F]{\vv[F]-\vv[T]}^2
    \right]\right)^{\nicefrac12}
  \lesssim 1,
  $$
  Thus, we can complete the proof by means of the interpolation inequality~\cite[Remark 2 p.~93]{Brezis:10}. 
  For all $h,h'\in{\cal H}$ we have with $\theta \eqbydef \frac{r-q}{q(r-1)}\in (0,1)$,
  $$
  \norm[L^q(\Omega;\Real^d)]{\vv[h]-\vv[h']}\le 
  \norm[L^1(\Omega;\Real^d)]{\vv[h]-\vv[h']}^{\theta}\norm[L^r(\Omega;\Real^d)]{\vv[h]-\vv[h']}^{1-\theta}
  \lesssim\norm[L^1(\Omega;\Real^d)]{\vv[h]-\vv[h']}^{\theta}.
  $$
  Therefore, up to a subsequence, $(\vv[h])_{h\in{\cal H}}$ is a Cauchy sequence in $L^q(\Omega;\Real^d)$, so it converges. 
\end{proof}
The following consistency properties of the symmetric gradient operator $\Ghs$ defined by~\eqref{eq:global.operators} play a fundamental role in the proof of Theorem~\ref{theo:convergence}.
\begin{proposition}[Consistency of the discrete symmetric gradient operator]
  \label{prop:consistency.Gh}  
  Let $(\Th)_{h\in\cal{H}}$ be a regular mesh sequence, and let $\Ghs$ be as in~\eqref{eq:global.operators} with $\GTs$ defined by~\eqref{eq:GTs.def} for all $T\in\Th$.
  \begin{enumerate}[1)]
  \item \emph{Strong consistency.} For all $\vv\in H^1(\Omega;\Real^d)$ with $\Ih$ defined by~\eqref{eq:Ih}, it holds as $h\to 0$
  \begin{equation}
    \label{eq:strong.conv}
    \Ghs\Ih\vv\to\GRADs\vv \text{ strongly in } L^2(\Omega;\Real^{d\times d}).
  \end{equation}

\item \emph{Sequential consistency.}
  For all $h\in\cal{H}$ and all $\matr{\tau}\in H^1(\Omega;\Real^{d\times d}_{\mathrm{sym}})$, denoting by 
  $\vec{\gamma}_{\normal}(\matr{\tau})$ the normal trace of $\vec\tau$ on $\Gamma$, it holds
  \begin{equation}
    \label{eq:Ghs.seq_consistency}
    \lim_{h\to 0}
    \left(\max_{\uvv[h]\in\Uh,\,\norm[\epsilon,h]{\uvv[h]}=1}
    \left|\int_\Omega\Ghs\uvv[h]:\matr{\tau}+\vv[h]\cdot(\DIV\matr{\tau})
    -\int_{\Gamma}\vv[\Gamma,h]\cdot\vec{\gamma}_{\normal}(\matr{\tau}) \right|\right) = 0, 
  \end{equation}   
  \end{enumerate}
\end{proposition}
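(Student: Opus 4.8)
The plan is to treat the two claims separately: the first follows at once from the commuting property, while the second rests on a single telescoping identity for the consistency error.

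\emph{Strong consistency.} I would first invoke the commuting property~\eqref{eq:commuting}, which together with~\eqref{eq:global.operators} yields $\Ghs\Ih\vv=\vlproj[h]{k}(\GRADs\vv)$, i.e. the global $L^2$-orthogonal projection of $\GRADs\vv$ onto $\Poly{k}(\Th;\Real^{d\times d})$. It then suffices to show that $\vlproj[h]{k}\matr{w}\to\matr{w}$ strongly in $L^2$ for the fixed field $\matr{w}\eqbydef\GRADs\vv\in L^2(\Omega;\Real^{d\times d})$. Since only $L^2$-regularity is available, I would argue by density: for smooth $\matr{w}$ the volume estimate~\eqref{eq:approx.lproj} with $s=1$ gives $\norm{\matr{w}-\vlproj[h]{k}\matr{w}}\lesssim h\,\seminorm[H^1(\Th)]{\matr{w}}\to 0$, and the general case is obtained by approximating in $L^2$ by smooth fields and using that $\vlproj[h]{k}$ is an $L^2$-contraction.

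\emph{Sequential consistency.} The heart of the matter is to prove, for fixed $\matr{\tau}\in H^1(\Omega;\Real^{d\times d}_{\mathrm{sym}})$ and every $\uvv[h]\in\Uh$, the identity
\begin{multline}\label{eq:plan.identity}
\int_\Omega\Ghs\uvv[h]:\matr{\tau}+\int_\Omega\vv[h]\cdot(\DIV\matr{\tau})-\int_{\Gamma}\vv[\Gamma,h]\cdot\vec{\gamma}_{\normal}(\matr{\tau})\\
=\sum_{T\in\Th}\sum_{F\in\Fh[T]}\int_F(\vv[F]-\vv[T])\cdot\big((\vlproj[T]{k}\matr{\tau}-\matr{\tau})\normal_{TF}\big).
\end{multline}
I would establish this element by element. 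Because $\GTs\uvv[T]\in\Poly{k}(T;\Real^{d\times d}_{\mathrm{sym}})$, I may replace $\matr{\tau}$ by $\vlproj[T]{k}\matr{\tau}$ in $\int_T\GTs\uvv[T]:\matr{\tau}$ and apply the reformulation~\eqref{eq:GTs} with test function $\vlproj[T]{k}\matr{\tau}$; since $\GRADs\vv[T]\in\Poly{k-1}(T)$ the volume contribution reduces to $\int_T\GRADs\vv[T]:\matr{\tau}$, which I integrate by parts (legitimately, as $\matr{\tau}$ is symmetric and in $H^1$) to produce $-\int_T\vv[T]\cdot\DIV\matr{\tau}+\sum_{F}\int_F\vv[T]\cdot(\matr{\tau}\normal_{TF})$. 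Summing over $T\in\Th$, the volume terms cancel exactly against $\int_\Omega\vv[h]\cdot(\DIV\matr{\tau})$, leaving only face integrals.

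The delicate bookkeeping, which is the main obstacle, concerns the face terms carrying $\vv[F]$. Here I would split $\vlproj[T]{k}\matr{\tau}=(\vlproj[T]{k}\matr{\tau}-\matr{\tau})+\matr{\tau}$ and crucially use that, $\matr{\tau}$ being in $H^1$, its normal trace is single-valued across interfaces: on each $F\in\Fhi$ shared by $T_1,T_2$ the contributions $\int_F\vv[F]\cdot(\matr{\tau}\normal_{T_1F})$ and $\int_F\vv[F]\cdot(\matr{\tau}\normal_{T_2F})$ cancel (as $\normal_{T_1F}=-\normal_{T_2F}$), while those over $F\in\Fhb$ reassemble precisely into $\int_\Gamma\vv[\Gamma,h]\cdot\vec{\gamma}_{\normal}(\matr{\tau})$ and cancel the boundary term; combining with the $\vv[T]$ face integrals produces~\eqref{eq:plan.identity}. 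To conclude, I would apply the Cauchy--Schwarz inequality to the right-hand side, bound $\norm[F]{(\vlproj[T]{k}\matr{\tau}-\matr{\tau})\normal_{TF}}\le\norm[F]{\matr{\tau}-\vlproj[T]{k}\matr{\tau}}$ by the trace estimate~\eqref{eq:approx.lproj.trace} with $s=1$, and recognise $\sum_{T\in\Th}\sum_{F\in\Fh[T]}h_F^{-1}\norm[F]{\vv[F]-\vv[T]}^2\le\norm[\epsilon,h]{\uvv[h]}^2$ from~\eqref{eq:norm.epsh}; inserting the scalings $h_F\lesssim h_T\le h$ this gives $|\text{RHS}|\lesssim h\,\seminorm[H^1(\Omega)]{\matr{\tau}}\,\norm[\epsilon,h]{\uvv[h]}$, so that the maximum over $\norm[\epsilon,h]{\uvv[h]}=1$ tends to zero as $h\to 0$.
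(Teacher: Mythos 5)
Your proof is correct, and it splits naturally into a part that mirrors the paper and a part that genuinely improves on it. For part 1) your argument is essentially the paper's own: both reduce, via the commuting property~\eqref{eq:commuting}, to the strong $L^2$-convergence of the broken $L^2$-projection of the fixed field $\GRADs\vv$, proved by density plus the contraction property of the projector (the paper approximates $\vv$ in $H^1(\Omega;\Real^d)$ by $H^2$ functions and you approximate $\GRADs\vv$ in $L^2$ by smooth fields, but the mechanism is identical). For part 2) you take a different and leaner route. The paper inserts the piecewise-constant projection $\vlproj[T]{0}\matr{\tau}$ as the test tensor in the definition~\eqref{eq:GTs.def} of $\GTs$, which splits the consistency error into a volumetric term $\term_1=\sum_{T\in\Th}\int_T(\GTs\uvv[T]-\GRADs\vv[T]):(\matr{\tau}-\vlproj[T]{0}\matr{\tau})$ plus a face term $\term_2$, and bounding $\term_1$ forces the paper to control $\norm{\Ghs\uvv[h]}$ by $\norm[\epsilon,h]{\uvv[h]}$ through the norm equivalence~\eqref{eq:norm.equivalence}. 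You instead test against the full-degree projection $\vlproj[T]{k}\matr{\tau}$, exploiting the exact orthogonality $\int_T\GTs\uvv[T]:(\matr{\tau}-\vlproj[T]{k}\matr{\tau})=0$, which is legitimate because $\GTs\uvv[T]\in\Poly{k}(T;\Real^{d\times d}_{\mathrm{sym}})$ and the component-wise projection of a symmetric field is again symmetric; this makes the volumetric error vanish identically and leaves a single face term, which you then bound exactly as the paper bounds $\term_2$ (Cauchy--Schwarz, the trace estimate~\eqref{eq:approx.lproj.trace} with $m=0$, $s=1$, the pointwise bound $\lvert\matr{A}\,\normal_{TF}\rvert\le\norm[d\times d]{\matr{A}}$, and $h_F\le h_T\le h$). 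What your version buys is a one-term error identity and no recourse to~\eqref{eq:norm.equivalence}, so only the face part of the discrete strain norm~\eqref{eq:norm.epsh} enters; what the paper's version buys is that it only needs the crudest (degree-$0$) approximation property. Both yield the same bound $\lesssim h\,\norm[H^1(\Omega;\Real^{d\times d})]{\matr{\tau}}\,\norm[\epsilon,h]{\uvv[h]}$ and hence the same conclusion.
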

\begin{proof}
  \begin{asparaenum}[1)]
  \item \emph{Strong consistency.}
    We first assume that $\vv\in H^2(\Omega;\Real^d)$. Owing to the commuting property~\eqref{eq:commuting} and the 
approximation property~\eqref{eq:approx.lproj} with $m=1$ and $s=2$, it is inferred that 
$\norm[T]{\GTs\IT\vv - \GRADs\vv}\lesssim h\norm[H^2(T;\Real^d)]{\vv}$. Squaring, summing over $T\in\Th$, and taking the square root of the resulting inequality gives
\begin{equation}
  \label{eq:approx.Ghs}
  \norm[]{\Ghs\Ih\vv - \GRADs\vv}\lesssim h\norm[H^2(\Omega;\Real^d)]{\vv}.
\end{equation}
If $\vv\in H^1(\Omega;\Real^d)$ we reason by density, namely we take a sequence 
$({\vv}_{\epsilon})_{\epsilon>0}\subset H^2(\Omega;\Real^d)$ that converges to $\vv$ in $H^1(\Omega;\Real^d)$ as 
$\epsilon\to 0$ and, using twice the triangular inequality, we write 
\begin{equation}
  \label{eq:approx.GTs.density}
    \norm[]{\Ghs\Ih\vv - \GRADs\vv} \le \norm[]{\Ghs\Ih(\vv-{\vv}_{\epsilon})} + 
    \norm[]{\Ghs\Ih{\vv}_{\epsilon} - \GRADs{\vv}_{\epsilon}}+\norm[]{\GRADs(\vv-{\vv}_{\epsilon})}.
\end{equation}
By~\eqref{eq:approx.Ghs}, the second term in the right-hand side tends to $0$ as $h\to 0$. Moreover, owing to the commuting 
property~\eqref{eq:commuting} and the $H^1$-boundedness of $\vlproj[T]{k}$, one has
$$
\norm[]{\Ghs\Ih(\vv-{\vv}_{\epsilon})}
=\left(\sum_{T\in\Th}\norm[T]{\vlproj[T]{k}\GRADs(\vv-{\vv}_{\epsilon})}^2\right)^{\nicefrac12}
\le\left(\sum_{T\in\Th}\norm[T]{\GRADs(\vv-{\vv}_{\epsilon})}^2\right)^{\nicefrac12}
\le\norm[]{\GRADs(\vv-{\vv}_{\epsilon})}.
$$ 
Therefore, taking the supremum limit as $h\to 0$ and then the supremum limit as $\epsilon\to 0$, concludes the proof of~\eqref{eq:strong.conv} (notice that the order in which the limits are taken is important).

\item \emph{Sequential consistency.}
In order to prove~\eqref{eq:Ghs.seq_consistency} we observe that, by the definitions~\eqref{eq:global.operators} of $\Ghs$
and~\eqref{eq:GTs} of $\GTs$ one has, for all $\matr{\tau}\in H^1(\Omega;\Real^{d\times d}_{\mathrm{sym}})$ and all
$\uvv[h]\in\Uh$,
\begin{equation} 
\begin{aligned}
  \label{eq:seq_consistency}
  \int_\Omega& \Ghs\uvv[h]:\matr{\tau} 
  =\sum_{T\in\Th} \int_T\GTs\uvv[T]:\matr{\tau}
  \\
  &= \sum_{T\in\Th} \int_T(\GTs\uvv[T] - \GRADs\vv[T]):(\matr{\tau} - \vlproj[T]{0}\matr{\tau}) 
  + \sum_{T\in\Th} \int_T(\GTs\uvv[T] - \GRADs\vv[T]):\vlproj[T]{0}\matr{\tau} 
  + \sum_{T\in\Th} \int_T\GRADs\vv[T]:\matr{\tau}
  \\
  &=\term_1 + \sum_{T\in\Th}\sum_{F\in\Fh[T]} \int_F(\vv[F]-\vv[T])\cdot(\vlproj[T]{0}\matr{\tau})\normal_{TF}
  + \sum_{T\in\Th} \int_T\GRADs\vv[T]:\matr{\tau} 
  \\
  &=\term_1 + \sum_{T\in\Th}\sum_{F\in\Fh[T]} \int_F(\vv[F]-\vv[T])\cdot(\vlproj[T]{0}\matr{\tau}-\matr{\tau})\normal_{TF}
  - \sum_{T\in\Th} \int_T\vv[T]\cdot(\DIV\matr{\tau}) + \sum_{F\in\Fhb} \int_F\vv[F]\cdot(\matr{\tau}\normal_{TF})
  \\
  &=\term_1 + \term_2 - \int_{\Omega} \vv[h]\cdot(\DIV\matr{\tau})
  + \int_{\Gamma} \vv[\Gamma,h]\cdot\vec{\gamma}_{\normal}(\matr{\tau}).
\end{aligned}
\end{equation}
In the fourth line, we used an element-wise integration by parts together with the relation 
$$
\sum_{T\in\Th}\sum_{F\in\Fh[T]\cap\Fhi}\int_F\vv[F]\cdot(\matr{\tau}\normal_{TF})
= \sum_{F\in\Fhi} \int_F\vv[F]\cdot(\matr{\tau}\normal_{T_1 F}+\matr{\tau}\normal_{T_2 F}) = 0,
$$
where for all $F\in\Fhi$, $T_1, T_2\in\Th$ are such that $F\subset \partial T_1\cap\partial T_2$. Owing 
to~\eqref{eq:seq_consistency}, the conclusion follows once we prove that 
$\left|\term_1+\term_2\right|\lesssim  h\norm[\epsilon,h]{\uvv[h]}\norm[H^1(\Omega;\Real^{d\times d})]{\matr{\tau}}$.
By~\eqref{eq:approx.lproj} (with $m=0$ and $s=1$) we have 
$\norm[T]{\matr{\tau}-\vlproj[T]{0}\matr{\tau}}\lesssim h_T \norm[H^1(T;\Real^{d\times d})]{\matr{\tau}}$ and thus, using 
the Cauchy--Schwarz and triangle inequalities followed by the norm equivalence~\eqref{eq:norm.equivalence},
\begin{equation}\label{eq:seq_consistency.term1}
\begin{aligned}
  |\term_1|
  &\le \left(\sum_{T\in\Th}\norm[T]{\GTs\uvv[T]-\GRADs\vv[T]}^2\right)^{\nicefrac12}
  \left(\sum_{T\in\Th}\norm[T]{\matr{\tau}-\vlproj[T]{0}\matr{\tau}}^2\right)^{\nicefrac12}
  \\
  &\lesssim h\left(\norm[]{\Ghs\uvv[h]}^2+\norm[\epsilon,h]{\uvv[h]}^2\right)^{\nicefrac12}
  \norm[H^1(\Omega;\Real^{d\times d})]{\matr{\tau}}
  \lesssim  h\norm[\epsilon,h]{\uvv[h]}\norm[H^1(\Omega;\Real^{d\times d})]{\matr{\tau}}.
\end{aligned}
\end{equation}
In a similar way, we obtain an upper bound for $\term_2$. By~\eqref{eq:approx.lproj.trace} (with $m=0$ and $s=1$), for all 
$F\in\Fh[T]$, we have $\norm[F]{\matr{\tau}-\vlproj[T]{0}\matr{\tau}}\lesssim h_T^{\nicefrac12}
\norm[H^1(T;\Real^{d\times d})]{\matr{\tau}}\lesssim h_F^{\nicefrac12} \norm[H^1(T;\Real^{d\times d})]{\matr{\tau}}$ 
and thus, using the Cauchy--Schwarz inequality,
\begin{equation} 
  \label{eq:seq_consistency.term2}
  |\term_2| \lesssim 
  \sum_{T\in\Th}\sum_{F\in\Fh[T]}h_F^{\nicefrac12}\norm[F]{\vv[F]-\vv[T]}\norm[H^1(T;\Real^{d\times d})]{\matr{\tau}}
  \lesssim h \norm[\epsilon,h]{\uvv[h]}\norm[H^1(\Omega;\Real^{d\times d})]{\matr{\tau}}.
\end{equation}
Owing to~\eqref{eq:seq_consistency.term1} and~\eqref{eq:seq_consistency.term2}, the triangle inequality 
$|\term_1+\term_2|\le|\term_1|+|\term_2|$ yields the conclusion.\qedhere
  \end{asparaenum}
\end{proof}

We are now ready to prove convergence.

\begin{proof}[Proof of Theorem~\ref{theo:convergence}]
The proof is subdivided into four steps: in {\bf Step~1} we prove a uniform a priori bound on the solutions of the discrete problem~\eqref{eq:discrete.pb}; in {\bf Step~2} we infer the existence of a limit for the sequence of discrete solutions and investigate its regularity; in {\bf Step~3} we show that this limit solves the continuous problem~\eqref{eq:weak}; finally, in {\bf Step~4} we prove strong convergence.

  \textbf{Step 1: \textit{A priori bound.}}
  We start by showing the following uniform a priori bound on the sequence of discrete solutions:
  \begin{equation}
    \label{eq:a_priori}
    \norm[\epsilon,h]{\uvu[h]}\le C \norm[]{\vf},
  \end{equation}
 where the real number $C>0$ only depends on $\Omega,\lms,\gamma,\varrho$, and $k$. 
  Making $\uvv[h]=\uvu[h]$ in~\eqref{eq:discrete.pb} and using the coercivity property~\eqref{eq:hypo.coercivity} of 
$\ms$ in the left-hand side together with the Cauchy--Schwarz inequality in the right-hand side yields
$$
\sum_{T\in\Th}\left(\lms\norm[T]{\GTs\uvu[T]}^2+\sum_{F\in\Fh}\frac{\gamma}{h_F}\norm[F]{\vec{\Delta}_{TF}^k\uvu[T]}^2\right)
\le\norm[]{\vf}\norm[]{\vu[h]}.
$$
Owing to the norm equivalence~\eqref{eq:norm.equivalence}, and using the discrete Korn inequality~\eqref{eq:Korn}
to estimate the right-hand side of the previous inequality, it is inferred that
$$
\eta^{-1}\min(1,\lms)\norm[\epsilon,h]{\uvu[h]}^2
\le\norm[]{\vf}\norm[]{\vu[h]}\le C_{\rm K}\norm[]{\vf}\norm[\epsilon,h]{\uvu[h]}.
$$ 
Dividing by $\norm[\epsilon,h]{\uvu[h]}$ yields~\eqref{eq:a_priori} with $C=\eta\min(1,\lms)^{-1}C_{\rm K}$.

\textbf{Step 2: \textit{Existence of a limit and regularity.}}
Let $1\le q<+\infty$ if $d=2$ or $1\le q<6$ if $d=3$.
Owing to the a priori bound~\eqref{eq:a_priori} and the norm equivalence~\eqref{eq:norm.equivalence}, the sequences 
$(\norm[\epsilon,h]{\vu[h]})_{h\in{\cal H}}$ and $(\norm[]{\Ghs\uvu[h]})_{h\in{\cal H}}$ are uniformly bounded. Therefore, 
Lemma~\ref{lemma:compactness} and the Kakutani theorem~\cite[Theorem~3.17]{Brezis:10} yield the existence of 
$\vu\in L^q(\Omega;\Real^d)$ and $\vec{\cal{G}}\in L^2(\Omega;\Real^{d\times d})$ such that as $h\to 0$, up to a 
subsequence,
\begin{equation}
  \label{eq:weak.conv}
  \text{
    $\vu[h]\to\vu$ strongly in $L^q(\Omega;\Real^d)$ and
    $\Ghs\uvu[h]\to\vec{\cal{G}}$ weakly in $L^2(\Omega;\Real^{d\times d})$.}
\end{equation}
This together with the fact that $\vu[h,\Gamma]=\vec0$ on $\Gamma$, shows that, 
for any $\matr{\tau}\in H^1(\Omega;\Real^{d\times d}_{\mathrm{sym}})$, 
\begin{equation}
  \label{eq:check.limit}
  \begin{aligned}
    &\left| \int_\Omega\vec{\cal{G}}:\matr{\tau} + \vu\cdot(\DIV\matr{\tau})\right|
    \\
    &\qquad=\lim_{h\to 0}
    \left|\int_\Omega\Ghs\uvu[h]:\matr{\tau}+\vu[h]\cdot(\DIV\matr{\tau})-
    \int_{\Gamma}\vu[h,\Gamma]\cdot\vec{\gamma}_{\normal}(\matr{\tau}) \right|
    \\
    &\qquad\le\lim_{h\to 0}\left(\norm[\epsilon,h]{\uvu[h]} 
    \max_{\uvv[h]\in\Uh,\,\norm[\epsilon,h]{\uvv[h]}=1}
    \left|\int_\Omega\Ghs\uvv[h]:\matr{\tau}+\vv[h]\cdot(\DIV\matr{\tau})-
    \int_{\Gamma}\vv[h,\Gamma]\cdot\vec{\gamma}_{\normal}(\matr{\tau}) \right|\right)= 0.
  \end{aligned}
\end{equation}
To infer the previous limit we have used the uniform bound~\eqref{eq:a_priori} on 
$\norm[\epsilon,h]{\uvu[h]}$ and the sequential consistency~\eqref{eq:Ghs.seq_consistency} of $\Ghs$.
Applying~\eqref{eq:check.limit} with $\vec\tau\in C_c^\infty(\Omega;\Real^{d\times d}_{\mathrm{sym}})$ leads to
$\int_\Omega\vec{\cal{G}}:\matr{\tau} + \vu\cdot(\DIV\matr{\tau})=0$,
thus $\vec{\cal{G}}=\GRADs\vu$ in the sense of distributions on $\Omega$. 
As a result, owing to the isomorphism of Hilbert spaces between $H^1(\Omega;\Real^d)$ and 
$\{\vv\in L^2(\Omega;\Real^d) \;|\; \GRADs\vv\in L^2(\Omega;\Real^{d\times d}_{\rm{sym}})\}$
proved in~\cite[Theorem 3.1]{Duvant.Lions:76}, we have $\vu\in H^1(\Omega;\Real^d)$.
Using again~\eqref{eq:check.limit} with $\matr{\tau}\in H^1(\Omega;\Real^{d\times d}_{\mathrm{sym}})$ and integrating by 
parts, we obtain $\int_{\Gamma}\vec\gamma(\vu)\cdot\vec\gamma_{\normal}(\vec\tau)= 0$ with $\vec\gamma(\vu)$ denoting the trace of $\vu$.
As the set $\{\vec\gamma_{\normal}(\vec\tau):\vec\tau\in H^1(\Omega;\Real^{d\times d}_{\rm{sym}})\}$ is dense 
in $L^2(\Gamma;\Real^d)$, we deduce that $\vec\gamma(\vu)=\vec{0}$ on $\Gamma$.
In conclusion, with convergences up to a subsequence,
$$
\text{
  $\vu\in H^1_0(\Omega;\Real^d)$,  
  $\vu[h]\to\vu$ strongly in $L^q(\Omega;\Real^d)$, and
  $\Ghs\uvu[h]\to\GRADs\vu$ weakly in $L^2(\Omega;\Real^{d\times d})$.
}
$$

\textbf{Step 3: \textit{Identification of the limit.}} Let us now prove that $\vu$ is a solution to~\eqref{eq:weak}. The 
growth property~\eqref{eq:hypo.growth} on $\ms$ and the bound on $(\norm[]{\Ghs\uvu[h]})_{h\in{\cal H}}$ ensure that 
the sequence $(\ms(\cdot,\Ghs\uvu[h]))_{h\in{\cal H}}$ is bounded in $L^2(\Omega;\Real^{d\times d}_{\mathrm{sym}})$. 
Hence, there exists $\vec\eta\in L^2(\Omega;\Real^{d\times d}_{\mathrm{sym}})$ such that, up to a subsequence as 
$h\to 0$,
\begin{equation}
  \label{eq:temp.conv}
  \ms(\cdot,\Ghs\uvu[h])\to\vec\eta \quad\text{ weakly in } L^2(\Omega;\Real^{d\times d}).
\end{equation}
Plugging into~\eqref{eq:discrete.pb} $\uvv[h]=\Ih\vec\phi$, with $\vec\phi\in C_c^\infty(\Omega;\Real^{d})$, gives
\begin{equation}
  \label{eq:discrete.rw}
  \int_{\Omega}\ms(\cdot,\Ghs\uvu[h]):\Ghs\Ih\vec\phi=
  \int_{\Omega}\vf\cdot\vlproj[h]{k}\vec\phi-s_h(\uvu[h],\Ih\vec\phi),
\end{equation}
with $\vlproj[h]{k}$ denoting the $L^2$-projector on the broken polynomial spaces $\Poly{k}(\Th;\Real^d)$ and $s_h$ defined 
by~\eqref{eq:sT}. Using the Cauchy--Schwarz inequality followed by the norm equivalence~\eqref{eq:norm.equivalence} to 
bound the first factor, we infer
\begin{equation}\label{eq:basic.stab}
  |s_h(\uvu[h],\Ih\vec\phi)|\le
  s_h(\uvu[h],\uvu[h])^{\nicefrac12} s_h(\Ih\vec\phi,\Ih\vec\phi)^{\nicefrac12}
  \le\norm[\epsilon,h]{\uvu[h]}s_h(\Ih\vec\phi,\Ih\vec\phi)^{\nicefrac12}.
\end{equation}
It was proved in~\cite[Eq. (35)]{Di-Pietro.Ern:15} using the optimal approximation properties of $\rT\IT$ that it holds for all $h\in{\cal H}$, all $T\in\Th$, all $\vv\in H^{k+2}(T;\Real^d)$, and all $F\in\Fh[T]$ that
\begin{equation}
  \label{eq:stab.app}
  h_F^{-\nicefrac12}\norm[F]{\vec{\Delta}_{TF}^k\IT\vv}\lesssim h_T^{k+1} \norm[H^{k+2}(T;\Real^d)]{\vv},
\end{equation}
with $\vec{\Delta}_{TF}^k$ defined by~\eqref{eq:DTF}.
As a consequence, recalling the definition~\eqref{eq:sT} of $s_h$, we have the following convergence result:
\begin{equation}
  \label{eq:convergence.stab}
  \forall\vv\in H^{1}(\Omega;\Real^d)\cap H^2(\Th;\Real^d),\quad 
  \lim_{h\to 0} s_h(\Ih\vv,\Ih\vv) = 0. 
\end{equation}
Recalling the a priori bound~\eqref{eq:a_priori} on the discrete solution and the convergence 
property~\eqref{eq:convergence.stab}, it follows from~\eqref{eq:basic.stab} that
$|s_h(\uvu[h],\Ih\vec\phi)|\to 0$ as $h\to 0$. Additionally, by the 
approximation property~\eqref{eq:approx.lproj} of the $L^2$-projector, one has $\vlproj[h]{k}\vec\phi\to\vec\phi$ 
strongly in $L^2(\Omega;\Real^d)$ and, by virtue of Proposition~\ref{prop:consistency.Gh}, that 
$\Ghs\Ih\vec\phi\to\GRADs\vec\phi$ strongly in $L^2(\Omega;\Real^{d\times d})$. Thus, we can pass to the limit $h\to 0$ 
in~\eqref{eq:discrete.rw} and obtain
\begin{equation}
  \label{eq:limit.pb}
  \int_\Omega\vec\eta:\GRADs\vec\phi = \int_\Omega\vf\cdot\vec\phi.
\end{equation}  
By density of $C_c^\infty(\Omega;\Real^d)$ in $H^1_0(\Omega;\Real^d)$, this relation still holds if 
$\vec\phi\in H^1_0(\Omega;\Real^d)$. On the other hand, plugging $\uvv[h]=\uvu[h]$ 
into~\eqref{eq:discrete.pb} and using the fact that $s_h(\uvu[h],\uvu[h])\ge 0$, we obtain
$$
\term_h\eqbydef\int_\Omega\ms(\cdot,\Ghs\uvu[h]):\Ghs\uvu[h]
\le\int_\Omega \vf\cdot\vu[h].
$$
Thus, using the previous bound, the strong convergence $\vu[h]\to\vu$, and~\eqref{eq:limit.pb}, it is inferred that 
\begin{equation}
  \label{eq:bound.termh}
  \lim_{h\to 0}\term_h
  \le\int_\Omega\vf\cdot\vu = \int_\Omega\vec\eta:\GRADs\vu.
\end{equation}
We now use the monotonicity assumption on $\ms$ and the Minty trick~\cite{Minty:63} to prove that 
$\vec\eta=\ms(\cdot,\GRADs\vu)$. Let $\vec\Lambda\in L^2(\Omega;\Real^{d\times d})$ and write, using the 
monotonicity~\eqref{eq:hypo.monotonicity} of $\ms$, the convergence~\eqref{eq:temp.conv} of $\ms(\cdot,\Ghs\uvu[h])$, 
and the bound~\eqref{eq:bound.termh},
\begin{equation}
  \label{eq:monotone}
  0\le\lim_{h\to 0}\left(
  \int_\Omega (\ms(\cdot,\Ghs\uvu[h])-\ms(\cdot,\vec\Lambda)):(\Ghs\uvu[h]-\vec\Lambda)
  \right)
  \le\int_\Omega(\vec\eta-\ms(\cdot,\vec\Lambda))
  :(\GRADs\vu-\vec\Lambda).
\end{equation}
Applying the previous relation with $\vec\Lambda=\GRADs\vu\pm t\GRADs\vv$, for $t>0$ and 
$\vv\in H^1_0(\Omega;\Real^d)$, and dividing by $t$, leads to
$$
0\le\pm\int_\Omega(\vec\eta-\ms(\cdot,\GRADs\vu\mp t\GRADs\vv)):\GRADs\vv.
$$
Owing to the growth property~\eqref{eq:hypo.growth} and the Caratheodory property~\eqref{eq:hypo.car1} of $\ms$, we can
let $t\to 0$ and pass the limit inside the integral and then inside the argument of $\ms$.
In conclusion, for all $\vv\in H^1_0(\Omega;\Real^d)$, we infer
\begin{equation*}
  \int_\Omega\ms(\cdot,\GRADs\vu):\GRADs\vv =
  \int_\Omega\vec\eta:\GRADs\vec\vv=\int_\Omega\vf\cdot\vv,
\end{equation*}
where we have used~\eqref{eq:limit.pb} with $\vec\phi=\vv$ in order to obtain the second equality.
The above equation shows that $\matr{\eta}=\ms(\cdot,\GRADs\vu)$ and that $\vu$ solves~\eqref{eq:weak}. 

\textbf{Step 4: \textit{Strong convergence.}} We prove that if $\ms$ is strictly 
monotone then $\Ghs\uvu[h]\to\GRADs\vu$ strongly in $L^2(\Omega;\Real^{d\times d})$. We define the function 
$\mathcal{D}_h:\Omega\to\Real$ such that
$$
\mathcal{D}_h\eqbydef(\ms(\cdot,\Ghs\uvu[h])-\ms(\cdot,\GRADs\vu)):(\Ghs\uvu[h]-\GRADs\vu).
$$
For all $h\in{\cal H}$, the function $\mathcal{D}_h$ is non-negative as a result of the monotonicity 
property~\eqref{eq:hypo.monotonicity} and, by~\eqref{eq:monotone} with 
$\vec\Lambda=\GRADs\vu$, it is inferred that $\lim_{h\to 0}\int_\Omega \mathcal{D}_h=0$. Hence, 
$(\mathcal{D}_h)_{h\in{\cal H}}$ converges to $0$ in $L^1(\Omega)$ and, therefore, also almost everywhere on $\Omega$ up to 
a subsequence. Let us take $\overline{\vec{x}}\in\Omega$ such that the above mentioned convergence hold at 
$\overline{\vec{x}}$. Developing the products in $\mathcal{D}_h$ and using the coercivity and growth 
properties~\eqref{eq:hypo.coercivity} and~\eqref{eq:hypo.growth} of $\ms$, one has
$$
\mathcal{D}_h(\overline{\vec{x}})\ge
\lms\norm[d\times d]{\Ghs\uvu[h](\overline{\vec{x}})}^2 - 2\ums\norm[d\times d]{\Ghs\uvu[h](\overline{\vec{x}})}
\norm[d\times d]{\GRADs\vu(\overline{\vec{x}})}+\lms\norm[d\times d]{\GRADs\vu(\overline{\vec{x}})}^2.
$$
Since the right hand side is quadratic in $\norm[d\times d]{\Ghs\uvu[h](\overline{\vec{x}})}$ and 
$(\mathcal{D}_h(\overline{\vec{x}}))_{h\in{\cal H}}$ is bounded, we deduce that also 
$(\Ghs\uvu[h](\overline{\vec{x}}))_{h\in{\cal H}}$ is bounded. Passing to the limit in the definition of 
$\mathcal{D}_h(\overline{\vec{x}})$ yields 
$$
\left(\ms(\overline{\vec{x}},\vec{L}_{\overline{\vec{x}}})-\ms(\overline{\vec{x}},\GRADs\vu(\overline{\vec{x}}))\right):
\left(\vec{L}_{\overline{\vec{x}}}-\GRADs\vu(\overline{\vec{x}})\right)=0,
$$
where $\vec{L}_{\overline{\vec{x}}}$ is an adherence value of $(\Ghs\uvu[h](\overline{\vec{x}}))_{h\in{\cal H}}$. 
The strict monotonicity assumption forces $\vec{L}_{\overline{\vec{x}}}=\GRADs\vu(\overline{\vec{x}})$ to be the unique 
adherence value of $(\Ghs\uvu[h](\overline{\vec{x}}))_{h\in{\cal H}}$, and therefore the sequence converges to this value. 
As a result,
\begin{equation}
  \label{eq:ae_conv}
  \Ghs\uvu[h]\to\GRADs\vu \text{ a.e. on }\Omega.
\end{equation} 
Using~\eqref{eq:bound.termh} together with Fatou's Lemma, we see that 
$$
\lim_{h\to 0}\int_\Omega \ms(\cdot,\Ghs\uvu[h]):\Ghs\uvu[h] =
\int_\Omega \ms(\cdot,\GRADs\vu):\GRADs\vu.
$$ 
Moreover, owing to~\eqref{eq:ae_conv}, $(\ms(\cdot,\Ghs\uvu[h]):\Ghs\uvu[h])_{h\in{\cal H}}$ is a non-negative sequence 
converging almost everywhere on $\Omega$. Using~\cite[Lemma 8.4]{Droniou:06} we see that this sequence also converges in 
$L^1(\Omega)$ and, therefore, it is equi-integrable in $L^1(\Omega)$. 
Thus, the coercivity~\eqref{eq:hypo.coercivity} of $\ms$ ensures that $(\Ghs\uvu[h])_{h\in{\cal H}}$ is 
equi-integrable in $L^2(\Omega;\Real^{d\times d})$ and Vitali's theorem shows that
\begin{equation*}
  \label{eq:grad.strong_conv}
  \Ghs\uvu[h]\to\GRADs\vu \text{ strongly in }L^2(\Omega;\Real^{d\times d}).\qedhere
\end{equation*}
\end{proof}

\subsection{Error estimate}\label{sec:err_est}

\begin{proof}[Proof of Theorem~\ref{th:error_estimate}]
  For the sake of conciseness, throughout the proof we let $\tuvu[h]\eqbydef\Ih\vu$ and use the following abridged 
  notations for the constraint field and its approximations:
  $$
  \text{$\fms\eqbydef\ms(\cdot,\GRADs\vu)$ and, for all $T\in\Th$,
  $\fms[T]\eqbydef\ms(\cdot,\GTs\uvu[T])$ and
  $\hfms[T]\eqbydef\ms(\cdot,\GTs\tuvu[T])$.}
  $$
  First we want to show that~\eqref{eq:err_est} holds assuming that
  \begin{equation}
    \label{eq:bound_discrete}
    \norm[\epsilon,h]{\uvu[h]-\tuvu[h]}\lesssim h^{k+1}
    \left( \norm[H^{k+2}(\Th;\Real^d)]{\vu}+\norm[H^{k+1}(\Th;\Real^{d\times d})]{\fms}\right).
  \end{equation}
  Using the triangle inequality, we obtain
  \begin{equation}
    \label{eq:tri_conv}
    \begin{aligned}
      \norm{\Ghs\uvu[h]-\GRADs\vu} + s_h(\uvu,\uvu)^{\nicefrac12}
      &\le \norm{\Ghs(\uvu[h]-\tuvu)} + s_h(\uvu-\tuvu,\uvu-\tuvu)^{\nicefrac12}
      \\
      &\quad + \norm{\Ghs\tuvu-\GRADs\vu} + s_h(\tuvu,\tuvu)^{\nicefrac12}.
    \end{aligned}
  \end{equation}
  Using the norm equivalence~\eqref{eq:norm.equivalence} followed by~\eqref{eq:bound_discrete} we obtain for the terms in 
  the first line of~\eqref{eq:tri_conv}
  $$
  \norm{\Ghs(\uvu[h]-\tuvu)} + s_h(\uvu-\tuvu,\uvu-\tuvu)^{\nicefrac12}
  \lesssim h^{k+1}\left( \norm[H^{k+2}(\Th;\Real^d)]{\vu}+\norm[H^{k+1}(\Th;\Real^{d\times d})]{\fms}\right).
  $$
  For the terms in the second line, using the approximation properties of $\Ghs$ resulting from~\eqref{eq:commuting} 
  together with~\eqref{eq:approx.lproj} for the first addend and~\eqref{eq:stab.app} for the second, we get
  \begin{align*}
    \norm{\Ghs\tuvu-\GRADs\vu} + s_h(\tuvu,\tuvu)^{\nicefrac12}    
    \lesssim h^{k+1}\norm[H^{k+2}(\Th;\Real^d)]{\vu}.
  \end{align*}
  It only remains to prove~\eqref{eq:bound_discrete}, which we do in two steps: in {\bf Step 1} we prove a basic estimate 
  in terms of a conformity error, which is then bounded in {\bf Step 2}.
  
  \textbf{Step 1: \textit{Basic error estimate.}} 
  Using for all $T\in\Th$ the strong monotonicity~\eqref{eq:hypo_add.smon} with $\matr{\tau}=\GTs\tuvu[T]$ and $\matr{\eta}=\GTs\uvu[T]$, we infer
  $$
  \norm{\Ghs(\tuvu[h]-\uvu[h])}^2\lesssim
  \sum_{T\in\Th}\int_T(\hfms[T]-\fms[T]):\GTs(\tuvu[T]-\uvu[T]).
  $$
  Owing to the norm equivalence~\eqref{eq:norm.equivalence} and the previous bound, we get
  $$
  \begin{aligned}
    \norm[\epsilon,h]{\tuvu[h]-\uvu[h]}^2
    &\lesssim
    \sum_{T\in\Th}\int_T(\hfms[T]-\fms[T]):\GTs(\tuvu[T]-\uvu[T])
    +s_h(\tuvu[h]-\uvu[h],\tuvu[h]-\uvu[h])
    \\
    &= a_h(\tuvu[h],\tuvu[h]-\uvu[h]) - \int_\Omega\vf\cdot(\tuvu[h]-\uvu[h]).
  \end{aligned}
  $$
  where we have used the discrete problem~\eqref{eq:discrete.pb} to conclude.
  Hence, dividing by \mbox{$\norm[\epsilon,h]{\tuvu[h]-\uvu[h]}$} and passing to the supremum in the right-hand side, we arrive at the following error estimate:
  \begin{equation}
    \label{eq:initial.est}
    \norm[\epsilon,h]{\tuvu[h]-\uvu[h]}
    \lesssim\sup_{\uvv[h]\in\UhD,\,\norm[\epsilon,h]{\uvv[h]}=1}\mathcal{E}_h(\uvv[h]),
  \end{equation}
  with conformity error such that, for all $\uvv[h]\in\UhD$,
  \begin{equation}
    \label{eq:defEh}
    \mathcal{E}_h(\uvv[h])\eqbydef\sum_{T\in\Th}
    \int_T\hfms[T]:\GTs\uvv[T] - \int_\Omega\vf\cdot\vv[h] + s_h(\tuvu[h],\uvv[h]).
  \end{equation}

  \textbf{Step 2: \textit{Bound of the conformity error.}}
  We bound the quantity $\mathcal{E}_h(\uvv[h])$ defined above for a generic $\uvv[h]\in\UhD$.
  Denote by $\term_1$, $\term_2$, and $\term_3$ the three addends in the right-hand side of~\eqref{eq:defEh}.

  Using for all $T\in\Th$ the definition~\eqref{eq:GTs.def} of $\GTs$ with $\matr{\tau}=\vlproj[T]{k}\hfms[T]$, we have that
  \begin{equation}\label{eq:conv.rate:T1}
    \term_1
    =\sum_{T\in\Th}\left(
    \int_T\hfms[T]:\GRADs\vv[T] + \sum_{F\in\Fh[T]}\int_F\vlproj[T]{k}\hfms[T]\normal_{TF}\cdot(\vv[F]-\vv[T])
    \right),
  \end{equation}
  where we have used the fact that $\GRADs\vv[T]\in\Poly{k-1}(T;\Real^{d\times d})$ together with the 
  definition~\eqref{eq:lproj} of the orthogonal projector to cancel $\vlproj[T]{k}$ in the first term. 
  
  On the other hand, using the fact that $\vf=-\DIV\fms$ a.e. in $\Omega$ and integrating by parts element by element, we 
  get that
  \begin{equation}\label{eq:conv.rate:T2}
    \term_2
    =-\sum_{T\in\Th}\left(
    \int_T\fms:\GRADs\vv[T] + \sum_{F\in\Fh[T]}\int_F\fms\normal_{TF}\cdot(\vv[F]-\vv[T])
    \right),
  \end{equation}
  where we have additionally used that $\restrto{\fms}{T_1}\normal_{T_1F}+\restrto{\fms}{T_2}\normal_{T_2F}=\vec{0}$ for 
  all interfaces $F\subset\partial T_1\cap\partial T_2$ and that $\vv[F]$ vanishes on $\Gamma$ (cf.~\eqref{eq:Uh0}) to 
  insert $\vv[F]$ into the second term.
  
  Summing~\eqref{eq:conv.rate:T1} and~\eqref{eq:conv.rate:T2}, taking absolute values, and using the Cauchy--Schwarz 
  inequality to bound the right-hand side, we infer that
  \begin{equation}\label{eq:conv.rate:est.T1+T2.basic}
    |\term_1+\term_2|
    \le\left(
    \sum_{T\in\Th}\left(
    \norm[T]{\fms-\hfms[T]}^2 + h_T \norm[\partial T]{\fms-\vlproj[T]{k}\hfms[T]}^2
    \right)
    \right)^{\nicefrac12}\norm[\epsilon,h]{\uvv[h]}.
  \end{equation}
  It only remains to bound the first factor.
  Let a mesh element $T\in\Th$ be fixed. 
  Using the Lipschitz continuity~\eqref{eq:hypo_add.lip} with $\matr{\tau}=\GTs\tuvu[T]$ and 
  $\matr{\eta}=\GRADs\vu$ and the optimal approximation properties of $\GTs\IT$ resulting from~\eqref{eq:commuting} 
  together with~\eqref{eq:approx.lproj} with $m=1$ and $s=k+2$, leads to 
  \begin{equation}
    \label{eq:est.T1T} 
    \norm[T]{\fms-\hfms[T]}
    \lesssim\norm[T]{\GRADs\vu-\GTs\tuvu[T]}\lesssim h^{k+1}\norm[H^{k+2}(T;\Real^d)]{\vu},
  \end{equation}
  which provides an estimate for the first term inside the summation in the right-hand side of~\eqref{eq:conv.rate:est.T1+T2.basic}.
  To estimate the second term, we use the triangle inequality, the discrete trace inequality of~\cite[Lemma~1.46]{Di-Pietro.Ern:12}, and the boundedness of $\vlproj[T]{k}$ to write
  $$
  h_T^{\nicefrac12}\norm[\partial T]{\fms-\vlproj[T]{k}\hfms[T]}
  \lesssim
  \norm[T]{\vlproj[T]{k}(\fms-\hfms[T])}  
  + h_T^{\nicefrac12}\norm[\partial T]{\fms-\vlproj[T]{k}\fms}  
  \le
  \norm[T]{\fms-\hfms[T]}  
  + h_T^{\nicefrac12}\norm[\partial T]{\fms-\vlproj[T]{k}\fms}.
  $$
  The first term in the right-hand side is bounded by~\eqref{eq:est.T1T}.
  For the second, using the approximation properties~\eqref{eq:approx.lproj.trace} of $\vlproj[T]{k}$ with $m=0$ and $s=k+1$, we get
  $h_T^{\nicefrac12}\norm[\partial T]{\fms-\vlproj[T]{k}\fms}\lesssim h^{k+1}\norm[H^{k+1}(T;\Real^{d\times d})]{\fms}$ so that, in conclusion,
  \begin{equation}
    \label{eq:est.T2T}
    h_T^{\nicefrac12}\norm[\partial T]{\fms-\vlproj[T]{k}\hfms[T]}
    \lesssim
    h^{k+1}\left(
    \norm[H^{k+2}(T;\Real^d)]{\vu} + \norm[H^{k+1}(T;\Real^{d\times d})]{\fms}
    \right).
  \end{equation}
  Plugging the estimates~\eqref{eq:est.T1T} and~\eqref{eq:est.T2T} into~\eqref{eq:conv.rate:est.T1+T2.basic} finally yields
  \begin{equation}\label{eq:conv.rate:est.T1+T2}
    |\term_1+\term_2|
    \lesssim h^{k+1}\left(
    \norm[H^{k+2}(\Th;\Real^d)]{\vu} + \norm[H^{k+1}(\Th;\Real^{d\times d})]{\fms}
    \right)\norm[\epsilon,h]{\uvv[h]}.
  \end{equation}  
  It only remains to bound $\term_3 = s_h(\tuvu[h],\uvv[h])$.
  Using the Cauchy--Schwarz inequality, the definition~\eqref{eq:sT} of $s_h$, the approximation 
  property~\eqref{eq:stab.app} of $\vec{\Delta}_{TF}^k$, and the norm equivalence~\eqref{eq:norm.equivalence}, we infer
  \begin{equation}\label{eq:conv.rate:est.T3}
    |\term_3|\lesssim
    \left(\sum_{T\in\Th}\sum_{F\in\Fh[T]} \frac{\gamma}{h_F}
    \norm[F]{\vec{\Delta}_{TF}^k\tuvu[T]}^2\right)^{\nicefrac12}s_h(\uvv[h],\uvv[h])^{\nicefrac12}
    \lesssim h^{k+1}\norm[H^{k+2}(\Th;\Real^d)]{\vu}\norm[\epsilon,h]{\uvv[h]}.
  \end{equation}
  Using~\eqref{eq:conv.rate:est.T1+T2} and~\eqref{eq:conv.rate:est.T3}, we finally get that, for all $\uvv[h]\in\UhD$,
  \begin{equation}\label{eq:est.Eh}
    \mathcal{E}_h(\uvv[h])\lesssim
    h^{k+1}\left(
    \norm[H^{k+2}(\Th;\Real^d)]{\vu} + \norm[H^{k+1}(\Th;\Real^{d\times d})]{\fms}
    \right)\norm[\epsilon,h]{\uvv[h]}.
  \end{equation}
  Thus, using~\eqref{eq:est.Eh} to bound the right-hand side of~\eqref{eq:initial.est},~\eqref{eq:bound_discrete} follows.
\end{proof}


\appendix
\section{Technical results}\label{app:tech.res}

This appendix contains the proof of the discrete Korn inequality~\eqref{eq:Korn}.

\begin{proposition}[Discrete Korn inequality]
  \label{prop:Korn}
  Assume that the mesh further verifies the assumption of~\cite[Theorem~4.2]{Brenner:04} if $d=2$ and~\cite[Theorem~5.2]{Brenner:04} if $d=3$.
  Then, the discrete Korn inequality~\eqref{eq:Korn} holds.
\end{proposition}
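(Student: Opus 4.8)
The plan is to control the full broken $H^1$ norm of the element-based field $\vv[h]$ by the strain semi-norm through the piecewise Korn inequalities of~\cite{Brenner:04}, and then to match the jump terms produced by the latter with the face-penalty contributions in $\norm[\epsilon,h]{\uvv[h]}$. A preliminary observation is that the broken symmetric gradient comes for free: by the definition~\eqref{eq:norm.epsh} of the semi-norm, one has $\norm{\GRADsh\vv[h]}^2 = \sum_{T\in\Th}\norm[T]{\GRADs\vv[T]}^2 \le \norm[\epsilon,h]{\uvv[h]}^2$. Hence it suffices to upgrade the broken symmetric gradient to the broken full gradient together with the $L^2$ norm, paying interface and boundary penalty terms that can subsequently be absorbed.

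Since $\vv[h]\in\Poly{k}(\Th;\Real^d)\subset H^1(\Th;\Real^d)$, I would invoke the piecewise Korn inequalities of~\cite[Theorem~4.2]{Brenner:04} (for $d=2$) and~\cite[Theorem~5.2]{Brenner:04} (for $d=3$), whose mesh hypotheses are assumed in the statement. These yield a constant independent of $h$ such that
\begin{equation*}
  \norm{\vv[h]}^2 + \norm{\GRADh\vv[h]}^2 \lesssim \norm{\GRADsh\vv[h]}^2 + \sum_{F\in\Fhi}h_F^{-1}\norm[F]{\jump{\vv[h]}}^2 + \sum_{F\in\Fhb}h_F^{-1}\norm[F]{\vv[h]}^2,
\end{equation*}
where $\jump{\vv[h]}\eqbydef\restrto{\vv[T_1]}{F}-\restrto{\vv[T_2]}{F}$ for $F\subset\partial T_1\cap\partial T_2$ is the interface jump and the boundary term encodes the homogeneous Dirichlet datum. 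If one prefers a form of Brenner's inequality controlling only the broken gradient, the $\norm{\vv[h]}$ contribution is recovered by a discrete Poincar\'e--Friedrichs inequality, using once more the boundary penalty terms.

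It then remains to absorb the right-hand side into $\norm[\epsilon,h]{\uvv[h]}$. For an interface $F\subset\partial T_1\cap\partial T_2$, inserting $\vv[F]$ and using the triangle inequality gives $\norm[F]{\jump{\vv[h]}}\le\norm[F]{\vv[T_1]-\vv[F]}+\norm[F]{\vv[F]-\vv[T_2]}$, whence $h_F^{-1}\norm[F]{\jump{\vv[h]}}^2\le 2\sum_{T\in\{T_1,T_2\}}h_F^{-1}\norm[F]{\vv[F]-\vv[T]}^2$. For a boundary face $F\subset\partial T\cap\Gamma$, the membership $\uvv[h]\in\UhD$ forces $\vv[F]=\vec{0}$, so that $\norm[F]{\vv[h]}=\norm[F]{\vv[T]-\vv[F]}$. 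Reorganizing the resulting sum over interfaces and boundary faces as a sum over elements and their faces, I obtain
\begin{equation*}
  \sum_{F\in\Fhi}h_F^{-1}\norm[F]{\jump{\vv[h]}}^2 + \sum_{F\in\Fhb}h_F^{-1}\norm[F]{\vv[h]}^2 \lesssim \sum_{T\in\Th}\sum_{F\in\Fh[T]}h_F^{-1}\norm[F]{\vv[F]-\vv[T]}^2 \le \norm[\epsilon,h]{\uvv[h]}^2.
\end{equation*}
Combining this with the preliminary observation and the Korn inequality above yields~\eqref{eq:Korn}.

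The main obstacle is not the penalty bookkeeping but the passage from the polytopal setting to Brenner's simplicial framework. Concretely, one reads the Korn inequality on the matching simplicial submesh $\fTh$, on which $\vv[h]$ is piecewise polynomial with vanishing jumps across the sub-faces interior to any $T\in\Th$; the only surviving jumps sit on $\partial T$, i.e.\ on sub-faces of the faces in $\Fh$. One must then check that the sub-face penalties control, and are controlled by, the $\Fh$-based penalties uniformly in $h$: this follows from mesh regularity, since $\varrho h_T\le h_S$ and $h_F\le h_T$ force any sub-face diameter $h_e$ to satisfy $h_e\gtrsim h_F$, so that $h_e^{-1}\lesssim h_F^{-1}$ and the local jump $L^2$ norms reassemble into $\norm[F]{{\cdot}}$. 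A minor additional point is that Brenner's inequalities are usually phrased with mean-value jump projections; as the mean is weaker than the full trace, its contribution is \emph{a fortiori} dominated by the full $L^2$ jump norm, which only eases the absorption step. This is the technical price of the extra mesh assumptions that, as noted in the discussion preceding the statement, could presumably be relaxed.
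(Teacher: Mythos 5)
Your proof is correct and follows essentially the same route as the paper: invoke Brenner's piecewise Korn inequality under the stated mesh assumptions, then absorb the interface jumps by inserting $\vv[F]$ via the triangle inequality and use $\vv[F]=\vec{0}$ on boundary faces to recover the $\norm[\epsilon,h]{{\cdot}}$ penalty terms. The only differences are cosmetic: the paper states Brenner's inequality with an unweighted boundary rigid-motion functional (paying a factor $\max\{1,d_\Omega\}$ in the final absorption) rather than your $h_F^{-1}$-weighted boundary penalty, and it applies the inequality directly on the polytopal mesh $\Th$ --- which is precisely what the hypothesis on~\cite[Theorems~4.2 and~5.2]{Brenner:04} licenses --- so your detour through the simplicial submesh $\fTh$, while sound, is not needed.
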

\begin{proof}
Using the broken Korn inequality~\cite[Eq. (1.22)]{Brenner:04} on $H^{1}(\Th;\Real^d)$ followed by the Cauchy--Schwarz 
inequality, one has
\begin{equation}
  \label{eq:korn:1}
  \begin{aligned}
    \norm{\vv[h]}^2 + \norm{\GRADh\vv[h]}^2
    &\lesssim\norm{\GRADsh\vv[h]}^2 
    + \sum_{F\in\Fhi}h_F^{-1}\norm[F]{\jump{\vv[h]}}^2
    + \sup_{{\vec{m}\in\Poly{1}(\Th;\Real^d)},\,{\norm[\Gamma]{\vec\gamma_{\normal}(\vec{m})}=1}}
    \left(\int_{\Gamma}\vec\gamma(\vv[h])\cdot\vec\gamma_{\normal}(\vec{m})\right)^2
    \\
    &\lesssim\norm{\GRADsh\vv[h]}^2 
    + \sum_{F\in\Fhi}h_F^{-1}\norm[F]{\jump{\vv[h]}}^2 + \sum_{F\in\Fhb}\norm[F]{\restrto{\vv[h]}{F}}^2.
  \end{aligned}
\end{equation}
For an interface $F\in\Fh[T_{1}]\cap\Fh[T_{2}]$, we have introduced the jump $\jump{\vv[h]}\eqbydef\vv[T_{1}]-\vv[T_{2}]$. 
Thus, using the triangle inequality, 
we get $\norm[F]{\jump{\vv[h]}}\le\norm[F]{\vv[F]-\vv[T_1]}+\norm[F]{\vv[F]-\vv[T_2]}$.
For a boundary face $F\in\Fhb$ such that $F\in\Fh[T]\cap\Fhb$ for some $T\in\Th$ we have, on the other hand, 
$\norm[F]{\restrto{\vv[h]}{F}}=\norm[F]{\vv[F]-\vv[T]}$ since $\vv[F]\equiv 0$ (cf.~\eqref{eq:Uh0}).
Using these relations in the right-hand side of~\eqref{eq:korn:1} and rearranging the sums leads to 
\begin{align*}
\norm{\vv[h]}^2 + \norm{\GRADh\vv[h]}^2 &\lesssim
\sum_{T\in\Th}\left(\norm[T]{\GRADs\vv[T]}^2 + \sum_{F\in\Fh[T]\cap\Fhi}h_F^{-1}\norm[F]{\vv[F]-\vv[T]}^2 \right)
+ h\sum_{F\in\Fhb}h_F^{-1}\norm[F]{\vv[F]-\restrto{\vv[h]}{F}}^2
\\
&\lesssim \max \{1,d_\Omega\}
\sum_{T\in\Th}\left(\norm[T]{\GRADs\vv[T]}^2 + \sum_{F\in\Fh[T]}h_F^{-1}\norm[F]{\vv[F]-\vv[T]}^2 \right),
\end{align*}
where $d_\Omega$ denotes the diameter of $\Omega$. Owing to the definition~\eqref{eq:norm.epsh} of the discrete strain 
seminorm, the latter yields the assertion.
\end{proof}

\begin{footnotesize}
  \bibliographystyle{plain}
  \bibliography{neh}
\end{footnotesize}

\end{document}